\documentclass{article}
\topmargin0.0cm
\headheight0.0cm
\headsep0.0cm
\oddsidemargin0.0cm
\textheight23.0cm
\textwidth16.5cm
\footskip1.0cm
\usepackage{amsmath,amsthm,amssymb,amsfonts, graphicx}
\usepackage{graphics}
\usepackage{authblk}
\usepackage{upgreek}
\usepackage{amsrefs,hyperref}
\usepackage{cleveref}
\theoremstyle{plain}
\newtheorem{theorem}{Theorem}[section]
\newtheorem{corollary}[theorem]{Corollary}

\newtheorem{lemma}[theorem]{Lemma}
\newtheorem{proposition}[theorem]{Proposition}

\newtheorem{question}[theorem]{Open Question}
\theoremstyle{definition}
\newtheorem{definition}[theorem]{Definition}
\theoremstyle{remark}
\newtheorem{remark}[theorem]{Remark}
\numberwithin{equation}{section}
\newcommand{\diff}{\mathop{}\!\mathrm{d}}

\DeclareMathOperator{\spn}{span}

\title{Permutation symmetric solutions of the incompressible Euler equation}
\author[1]{Evan Miller}
\affil[1]{University of Alabama in Huntsville,
Department of Mathematical Sciences

epm0006@uah.edu}

\begin{document}

\maketitle

\begin{abstract}
In this paper, we study permutation symmetric solutions of the incompressible Euler equation. We show that the dynamics of these solutions can be reduced to an evolution equation on a single vorticity component $\omega_1$, and we characterize the relevant constraint space for this vorticity component under permutation symmetry.
We also give single vorticity component versions of the energy equality, Beale-Kato-Majda criterion, and local wellposedness theory that are specific to the permutation symmetric case.
This paper is significantly motivated by a recent work of the author \cite{MillerRestricted}, which proved finite-time blowup for smooth solutions of a Fourier-restricted Euler model equation, where the Helmholtz projection is replaced by a projection onto a more restrictive constraint space. The blowup solutions for this model equation are odd, permutation symmetric, and mirror symmetric about the plane $x_1+x_2+x_3=0.$ Using the blowup solution introduced by Elgindi in \cite{Elgindi}, we are able to prove there are $C^{1,\alpha}$ solutions of the full Euler equation that blowup in finite-time, which are odd, permutation symmetric, and mirror symmetric about the plane $x_1+x_2+x_3=0$.

We will also prove that divergence-free vector fields that are odd, permutation symmetric, and mirror symmetric about the plane $x_1+x_2+x_3=0$ ($\mathcal{G}_\sigma$ symmetric) are equivalent up to a change of coordinates given by a rotation 
to divergence-free vector fields that are mirror symmetric about each of the three coordinate axes and symmetric with respect to rotations by $\frac{\pi}{3}$ in the horizontal plane ($\mathcal{G}$-symmetric).
The latter discrete symmetry group allows for a Fourier series expansion in cylindrical coordinates that shines a further light on the structure of these symmetry groups, in particular their relation to axisymmetric, swirl-free vector fields.
This yields a useful Anstaz for the further study of the dynamics of the Euler equation under this set of discrete symmetries.
\end{abstract}

\tableofcontents

\section{Introduction}

The incompressible Euler equation is an evolution equation on the space of divergence free vector fields,
\begin{equation}
\partial_t u+\mathbb{P}_{df}((u\cdot\nabla)u)=0,
\end{equation}
where $\mathbb{P}_{df}$ is the Helmholtz projection enforcing the constraint $\nabla\cdot u=0$.
The dynamics of the Euler equation can also be studied in terms of the vorticity $\omega=\nabla\times u$, which satisfies
the evolution equation
\begin{equation}
\partial_t \omega
+(u\cdot\nabla)\omega
-(\omega\cdot\nabla)u=0.
\end{equation}
These two formulations are equivalent, because the velocity can be recovered from the vorticity via the Biot-Savart law
\begin{equation}
    u=\nabla\times(-\Delta)^{-1}\omega.
\end{equation}

In the recent work \cite{MillerRestricted}, the author proved finite-time blowup for smooth solutions of a model equation for the Euler and hypodissipative Navier--Stokes equation on the three dimensional torus.
In this model equation, the Helmholtz projection is replaced by a new projection 
$\mathbb{P}_\mathcal{M}$ onto a subspace of divergence free vector fields
\begin{equation}
        H^s_{\mathcal{M}} \subset
    H^s_{df},
\end{equation}
yielding the Fourier-restricted Euler equation
\begin{equation}
    \partial_t u
    +\mathbb{P}_\mathcal{M}(u\cdot\nabla)u
    =0.
\end{equation}
The $(u\cdot\nabla)u$ nonlinearity is unaltered,
and consequently, while the change in projection dramatically simplifies the dynamics, the mechanism for the cascade of energy to higher order Fourier modes observed in the Fourier-restricted model equation also exists for the full Euler equation.
Because the the dynamics are much more complicated for the full Euler equation, the methods used to prove blowup for the model equation cannot be generalized to the full Euler equation. Nonetheless, the geometric setting in which blowup occurs for the Fourier-restricted Euler model equation remains relevant as a potential scenario for the blowup of the full Euler equation.

The blowup solutions for the restricted Euler equation are odd, permutation symmetric, and mirror symmetric about the plane $x_1+x_2+x_3=0$ (which we will refer to as $\sigma$-mirror symmetric).
The permutation symmetry is essential to the structure of the construction, which suggests that studying the finite-time blowup problem for permutation symmetric solutions of the full Euler equation is worthwhile line of inquiry for the important open problem of finite-time blowup for the Euler equation.
In this paper, we will develop the theory of permutation symmetric solutions of the Euler equation, including developing a Biot-Savart law in terms of one vorticity component and a reduction of the vorticity equation to a scalar evolution equation for one vorticity component. A permutation symmetric solution is a solution of the Euler equation that is invariant when interchanging the coordinate axes. A more formal definition will be given in \Cref{PermSymDef}, and a formal definition of $\sigma$-mirror symmetry for vector fields will be given in \Cref{MirrorSymDef}.

The local existence theory for strong solutions of the Euler equation is very well established, including local existence, uniqueness, and continuous dependence on initial data, but the question of finite-time blowup remains largely open in three dimensions. 
Somewhat counter-intuitively, both the strongest blowup result and the strongest global regularity result known for three dimensional solutions of the Euler equation on the whole space come in the axisymmetric, swirl-free geometry.
The foundational work of Ukhovskii and Yudovich \cite{Yudovich} in the 1960s showed that sufficiently smooth solutions of the Euler equation must exist globally-in-time in this geometry. Very recently, Elgindi \cite{Elgindi} and Elgindi, Ghoul, and Masmoudi \cite{ElgindiGhoulMasmoudi}, proved that there are axisymmetric, swirl-free, $C^{1,\alpha}$ solutions of the incompressible Euler equation that blowup in finite-time. Elgindi and Pasqualotto further generalized this result to include a blowup that is not at the origin, and therefore not at a stagnation point \cite{ElgindiPasqualotto}.
These recent results are the only finite-time blowup results for strong solutions of the Euler equation on the whole space; although in the case of domains with a boundary, there are other blowup results involving finite-time singularity formation at the boundary \cites{ElgindiJeongCorners,ChenHouA,ChenHouB}.
It should be pointed out that while the blowup solutions constructed by Elgindi may not be smooth, they are nonetheless in a class where there is strong local wellposedness, including local existence and continuous dependence on the initial data.

Axisymmetric, swirl-free solutions of the Euler equation are completely determined by a scalar vorticity $\omega_\theta$, where
\begin{equation}
    \omega(x)=\omega_\theta(r,z)e_\theta.
\end{equation}
This scalar vorticity satisfies the evolution equation
\begin{equation}
\partial_t\omega_\theta
+u\cdot\nabla\omega_\theta
-\frac{u_r}{r}\omega_\theta
=0.
\end{equation}
Here the vortex stretching term
$\frac{u_r}{r}\omega_\theta$ can produce growth, but this growth is depleted by the advection. In particular, it is simple to observe that $\frac{\omega_\theta}{r}$ is transported by the flow with
\begin{equation}
(\partial_t+u\cdot\nabla)\frac{\omega_\theta}{r}
=0.
\end{equation}
This implies that if $\frac{\omega_\theta}{r}$ is bounded in a suitable space, then there must be global smooth solutions. Danchin proved that $\frac{\omega^0_\theta}{r}\in 
L^{3,1}(r \diff r\diff z)$ and
$\omega^0_\theta\in 
L^{3,1}(r \diff r\diff z) \cap L^\infty$
is sufficient to guarantee a global solution \cite{Danchin}. Incidentally, these conditions automatically hold when
$u^0\in H^s, s>\frac{5}{2}$ (and $u^0$ is axisymmetric, swirl-free).

Elgindi is able to work around this in \cite{Elgindi}, because if $\omega_\theta \sim r^\alpha$ for some $\alpha \ll 1$, then the quantity $\frac{\omega_\theta}{r}$ is too singular for the fact that it is transported by the flow to prevent blowup. The result is that the smaller the value of $\alpha$ (i.e. the rougher the data while retaining the H\"older continuity of the vorticity), the weaker the effect of advection relative to vortex stretching,
and the more singular the dynamics.

One of the advantages of the axisymmetric, swirl-free formulation is that it can be described by a scalar evolution equation for the purely azimuthal vorticity. There is really no reason to think that the finite-time blowup described in \cites{Elgindi,ElgindiGhoulMasmoudi} is the most singular possible behaviour for solutions of the Euler equation in $\mathbb{R}^3$.
Rather, there is a scalar evolution equation in which the geometry of  singularity formation can be expressed very clearly in terms of sign conditions on $\omega_\theta$ in a way that is intractable for more complicated geometries.
The problem is that, because axisymmetry is a continuous symmetry, there is a reduction in dimension, and smooth, axisymmetric, swirl-free solutions of the three dimensional Euler equation have too much in common with two dimensional solutions of the Euler equation to blowup in finite-time.

For permutation symmetric solutions of the Euler equation, we will also be able reduce the vorticity equation to a scalar evolution equation, this time in terms of a single vorticity component. We will consider the evolution in terms of $\omega_1$, but to consider the dynamics of $\omega_2$ or $\omega_3$ would be equivalent. The important thing is that permutation symmetry guarantees that any one vorticity component completely determines both $\omega$ and $u$, giving us a scalar evolution equation that encodes all of the dynamics. At the same time, because the symmetry group is discrete, there is no reduction in dimension. The problem is still fully three dimensional, and finite-time blowup for smooth solutions is, at the very least, not ruled out.

\begin{theorem} 
\label{OneVortCompThmIntro}
    For all $\omega_1^0\in H^s_*\cap \dot{H}^{-1}, s>\frac{3}{2}$, there exists a unique solution 
    $\omega_1\in C\left([0,T_{max});
    H^s_*\cap\dot{H}^{-1}\right)
    \cap
    C^1\left([0,T_{max});
    H^{s-1}_*\right)$,
    to the single-component, permutation-symmetric  Euler vorticity equation 
    \begin{equation}
    \partial_t\omega_1
    +(u\cdot\nabla)\omega_1
    -(\omega\cdot\nabla)u_1
    =0,
    \end{equation}
    where
    \begin{align}
    \omega_2(x)
    &=
    -\omega_1(x_2,x_1,x_3) \\
    \omega_3(x)
    &=
    -\omega_1(x_3,x_2,x_1).
    \end{align}
    \begin{equation}
    u(x)=\int_{\mathbb{R}^3}
    G(x,y) \omega_1(y) \diff y,
    \end{equation}
    and
    \begin{equation}
    G(x,y)=
    \frac{1}{|x-y|^3}\left(\begin{array}{c}
         0  \\ -x_3+y_3 \\ x_2-y_2 
    \end{array}\right)
    +
    \frac{1}{|x-P_{12}(y)|^3}
    \left(\begin{array}{c}
         -x_3+y_3  \\ 0 \\ x_1-y_2 
    \end{array}\right)
    +
    \frac{1}{|x-P_{13}(y)|^3}
    \left(\begin{array}{c}
         x_2-y_2  \\ -x_1+y_3 \\ 0 
    \end{array}\right).
    \end{equation}
    Note that $P_{12}(x)=(x_2,x_1,x_3)$
and $P_{13}(x)=(x_3,x_2,x_1)$.
    
    We have a lower bound on the time of existence
    \begin{equation}
    T_{max}
    \geq 
    \frac{C_s}{\left\|\omega_1^0
    \right\|_{H^s\cap\dot{H}^{-1}}},
    \end{equation}
    where $C_s>0$ is an absolute constant independent of $u^0$.
    Furthermore, $u\in C\left([0,T_{max});
    H^{s+1}_{df}\right)
    \cap
    C^1\left([0,T_{max});
    H^{s}_{df}\right)$ is a permutation symmetric solution of the Euler equation.
    There are also permutation symmetric variants of the energy equality and the Beale-Kato-Madja Criterion:
    for all $0<t<T_{max}$, 
    \begin{equation}
    \|\omega_1(\cdot,t)
    \|_{\dot{H}^{-1}}^2
    =
    \left\|\omega_1^0
    \right\|_{\dot{H}^{-1}}^2;
    \end{equation}
    if $T_{max}<+\infty$,
    \begin{equation}
    \int_0^{T_{max}}
    \|\omega_1(\cdot,t)\|_{L^\infty}
    \diff t
    =+\infty.
    \end{equation}
\end{theorem}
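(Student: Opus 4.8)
The plan is to treat the scalar equation for $\omega_1$ as an abstract evolution equation on the constraint space $H^s_*\cap\dot{H}^{-1}$ and to run the classical local well-posedness machinery for the Euler equation (energy estimates together with a mollification/contraction scheme), adapted to the single-component reduction. The first step is to record the mapping properties of the Biot--Savart operator $\omega_1\mapsto u$ with kernel $G$. Since $G$ is, by construction, obtained by substituting the symmetry relations $\omega_2(x)=-\omega_1(x_2,x_1,x_3)$ and $\omega_3(x)=-\omega_1(x_3,x_2,x_1)$ into the standard vector law $u=\nabla\times(-\Delta)^{-1}\omega$ and summing the three contributions, the operator inherits the usual elliptic gain of one derivative. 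Here the singularity of $G$ is of order $|x-y|^{-2}$, locally integrable in three dimensions and decaying like $|y|^{-2}$ at infinity, so the integral converges precisely under the combined $H^s_*\cap\dot{H}^{-1}$ control. I would verify $\|u\|_{H^{s+1}}\lesssim\|\omega\|_{H^s}\lesssim\|\omega_1\|_{H^s}$, where the second inequality uses that each permuted component has the same $H^s$ norm as $\omega_1$, and likewise $\|u\|_{L^2}^2=\|\omega\|_{\dot{H}^{-1}}^2=3\|\omega_1\|_{\dot{H}^{-1}}^2$ (using that $\omega$ is divergence free). This yields $u\in H^{s+1}_{df}$ whenever $\omega_1\in H^s_*\cap\dot{H}^{-1}$.

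Second, I would establish that the nonlinearity $N(\omega_1):=-(u\cdot\nabla)\omega_1+(\omega\cdot\nabla)u_1$ maps $H^s_*$ into $H^{s-1}_*$ and is locally Lipschitz on bounded sets. Preservation of the constraint space is the structural point: one must check that $N(\omega_1)$, built from the symmetric reconstruction, again satisfies the permutation-symmetry constraints defining $H^s_*$, so that the scalar equation is genuinely closed. Granting this, the estimates are the standard Moser and Kato--Ponce product and commutator inequalities; with $s>\frac{3}{2}$ the embedding $\nabla u\in H^s\hookrightarrow L^\infty$ supplies $\|\nabla u\|_{L^\infty}\lesssim\|\omega_1\|_{H^s}$. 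For existence I would mollify the equation (or iterate in $H^s_*$) and derive the a priori estimate
\[
\frac{d}{dt}\|\omega_1\|_{H^s}\lesssim\|\nabla u\|_{L^\infty}\|\omega_1\|_{H^s}\lesssim\|\omega_1\|_{H^s}^2,
\]
the commutator structure of the transport and stretching terms absorbing the top-order derivatives. Integrating this Riccati inequality produces a solution on $[0,T_{max})$ with the claimed lower bound $T_{max}\gtrsim 1/\|\omega_1^0\|_{H^s}$, while uniqueness and continuous dependence follow from estimating the difference of two solutions in a lower norm (e.g.\ $L^2$) and applying Gr\"onwall.

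Third, the energy equality and Beale--Kato--Majda criterion are transferred from their full-Euler counterparts through the symmetry. Because $u=\nabla\times(-\Delta)^{-1}\omega$ solves the genuine Euler equation, conservation of kinetic energy gives $\|u(\cdot,t)\|_{L^2}=\|u^0\|_{L^2}$; combined with $\|u\|_{L^2}^2=3\|\omega_1\|_{\dot{H}^{-1}}^2$ the factor of three cancels and yields $\|\omega_1(\cdot,t)\|_{\dot{H}^{-1}}=\|\omega_1^0\|_{\dot{H}^{-1}}$. The Beale--Kato--Majda criterion follows from the classical statement $\int_0^{T_{max}}\|\omega\|_{L^\infty}\diff t=+\infty$ together with $\|\omega\|_{L^\infty}\leq\sqrt{3}\,\|\omega_1\|_{L^\infty}$, again a consequence of the three components being isometric (coordinate-permuted, sign-flipped) images of $\omega_1$, so that $\int_0^{T_{max}}\|\omega_1\|_{L^\infty}\diff t=+\infty$ as well.

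I expect the main obstacle to be the structural consistency of the reduction rather than the analytic estimates. Specifically, one must confirm that the reconstructed velocity $u$ solves the full Euler equation and that the symmetry-defined $\omega_2,\omega_3$ are genuinely the corresponding curl components of $u$ --- that is, that $H^s_*$ is invariant under the flow and the scalar equation is exactly equivalent to the full permutation-symmetric vorticity system. Verifying this closure, together with the precise characterization of $H^s_*$ as the space of admissible single components, is where the real content lies; once it is in place, the local well-posedness, the energy equality, and the Beale--Kato--Majda criterion are all adaptations of the standard theory to the symmetry-reduced setting.
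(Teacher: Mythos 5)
Your analytic estimates (the elliptic gain $\|u\|_{H^{s+1}}\lesssim\|\omega_1\|_{H^s\cap\dot{H}^{-1}}$, the norm isometries $\|\omega_i\|=\|\omega_1\|$ and $\|u\|_{L^2}^2=3\|\omega_1\|_{\dot{H}^{-1}}^2$, the $L^2$ Gr\"onwall argument for uniqueness, and the transfer of the energy equality and Beale--Kato--Majda criterion through the symmetry) all match what the paper does. But your existence strategy runs in the opposite direction from the paper's, and the step you yourself flag as ``where the real content lies'' --- verifying that a solution of the scalar equation, with $u$, $\omega_2$, $\omega_3$ reconstructed from $\omega_1$, actually corresponds to a solution of the full Euler equation and that $H^s_*$ is propagated --- is left unresolved. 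If you build $\omega_1$ by mollification or iteration on the scalar equation alone, you are only enforcing one of the three component equations, and you would still have to prove that the mollified iterates remain in the constraint space (the Fourier-space divergence condition $\xi_1\hat f(\xi)-\xi_2\hat f(P_{12}\xi)-\xi_3\hat f(P_{13}\xi)=0$ is not obviously preserved by a generic mollifier or by the scalar nonlinearity) and that the reconstructed $\omega$ equals $\nabla\times u$ and satisfies the full vorticity equation. Your third step then \emph{assumes} this closure when it invokes ``$u$ solves the genuine Euler equation'' to get energy conservation, so the proposal is circular at precisely the point it identifies as critical.

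The paper avoids this by reversing the logic. It first characterizes $H^s_*\cap\dot{H}^{-1}$ as exactly the set of first vorticity components of permutation-symmetric divergence-free fields (its Theorem 2.22), so the initial datum $\omega_1^0$ lifts to a unique permutation-symmetric $u^0\in H^{s+1}_{df}$. It then solves the \emph{full} Euler equation with the classical theory, uses preservation of permutation symmetry under the Euler flow (a consequence of $SO(3)$-equivariance plus uniqueness) to stay in the symmetric class, and simply reads off the first component of the vorticity equation; closure is automatic because $u$ is an honest Euler solution from the start. Only then does it prove the $L^2$ stability estimate for the scalar equation, whose sole purpose is to show that the scalar solution just produced is the \emph{only} one --- which is what licenses the claim that every solution of the scalar equation comes from a permutation-symmetric Euler flow. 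If you want to keep your direct construction, you must either supply the equivariance argument showing that the permuted copies of the $\omega_1$-equation are exactly the $\omega_2$- and $\omega_3$-equations and that the constraint is an invariant of your iteration, or (more economically) adopt the paper's lift-then-project order, in which case the stability estimate you already sketched finishes the job.
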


\begin{remark}
    The constraint space $H^s_*\cap\dot{H}^{-1}$ is precisely the space of functions that are the first components of the curl of permutation symmetric vector fields---equivalently are the first component of divergence free, skew symmetric vector fields. A precise definition will be given in \Cref{OneCompConstraintDef}, and the equivalence of this definition to being the first component of the curl of a permutation symmetric vector field will be proven in \Cref{ConstraintThm}.
\end{remark}

\begin{remark}
The basis for this reduction is that all of the symmetries of a vector field can be expressed in terms of its vorticity. In general, for any $Q\in O(3)$, a vector field $u\in H^s_{df}$ is $Q$-symmetric, that is
\begin{equation}
    u=u^Q,
\end{equation}
if and only if
\begin{equation}
    \omega= \det(Q)\omega^Q.
\end{equation}
Note that for any vector field $v$,
\begin{equation}
v^Q(x)=Q v\left(Q^{tr}x\right)
\end{equation}
Using this result and the fact that $P_{12}$ and $P_{13}$ are generators of the permutation group $\mathcal{P}_3$, we will show that a vector field $u\in H^s_{df}$ is permutation symmetric if and only if
\begin{equation}
  \omega= -\omega^{P_{12}}
  = -\omega^{P_{13}}.
\end{equation}
This immediately allows the reduction of the Euler dynamics to one vorticity component, including a Biot-Savart law for the velocity in terms of one vorticity component.
\end{remark}

There are also permutation symmetric, $C^{1,\alpha}$ solutions of the Euler equation that blowup in finite-time.

\begin{theorem} \label{AlphaPermuteBlowupIntro}
    There exists $\alpha>0$ and an odd, permutation symmetric, $\sigma$-mirror symmetric solution of the incompressible Euler equation
    $u\in C\left([0,1);L^2\cap C^{1,\alpha}\right)
    \cap C^1\left([0,1);L^2\cap C^\alpha\right)$,
    satisfying
    \begin{equation}
    \int_0^1 \|\omega(\cdot,t)\|_{L^\infty}
    \diff t
    =+\infty,
    \end{equation}
    and for all $0\leq t <1$,
    \begin{equation}
    \|u(\cdot,t)\|_{L^2}^2
    =\left\|u^0\right\|_{L^2}^2.
    \end{equation}
\end{theorem}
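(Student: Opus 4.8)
The plan is to obtain the blowup solution not by constructing it from scratch, but by rotating Elgindi's axisymmetric, swirl-free $C^{1,\alpha}$ blowup solution \cite{Elgindi} so that its symmetry axis is aligned with the diagonal direction $(1,1,1)$. The key observation is that the coordinate permutation group $S_3$, realized as permutation matrices acting on $\mathbb{R}^3$, fixes the diagonal axis and embeds as the dihedral group $D_3$ inside the $O(2)$ of isometries of the plane perpendicular to that axis: the two $3$-cycles are rotations by $\pm\tfrac{2\pi}{3}$ (orientation preserving, the even permutations), while the three transpositions are reflections about the planes $\{x_i = x_j\}$, each of which contains the diagonal (orientation reversing, the odd permutations). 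An axisymmetric, swirl-free velocity field is invariant under the full $O(2)$ about its axis: rotational invariance is axisymmetry itself, and invariance under reflections about planes containing the axis is automatic because such a field has no azimuthal component and no angular dependence. Hence, once its axis is aligned with the diagonal, such a field is automatically invariant under all of $S_3$.

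First I would recall the precise symmetries of Elgindi's solution $v(y,t)$ in its native frame with axis $e_3$: it is axisymmetric and swirl-free, its scalar vorticity $\omega_\theta$ is odd in $z$ (equivalently $v_r$ is even and $v_z$ is odd in $z$, which yields $v(-y,t) = -v(y,t)$), it lies in $C\big([0,1); L^2 \cap C^{1,\alpha}\big) \cap C^1\big([0,1); L^2 \cap C^{\alpha}\big)$ after rescaling time so that the blowup occurs at $t=1$, it conserves energy $\|v(\cdot,t)\|_{L^2} = \|v^0\|_{L^2}$, and it satisfies $\int_0^1 \|\nabla \times v(\cdot,t)\|_{L^\infty}\diff t = +\infty$. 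Next I would fix any rotation $R \in SO(3)$ with $R e_3 = \tfrac{1}{\sqrt 3}(1,1,1)$ and define $u(x,t) = R\, v(R^{\mathsf T} x, t)$. Because the incompressible Euler equation is covariant under rotations, $u$ is again a solution, with pressure $p(R^{\mathsf T}x,t)$.

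Then I would verify that $u$ inherits every required property. The symmetries transfer because $R$ conjugates the $O(2)$ about the $e_3$-axis onto the $O(2)$ about the diagonal: the $\pm\tfrac{2\pi}{3}$ rotations give cyclic permutation symmetry, the reflections about planes through the axis give the transposition symmetry, and together these give full permutation symmetry in the precise sense of $\omega_2(x) = -\omega_1(x_2,x_1,x_3)$ and $\omega_3(x) = -\omega_1(x_3,x_2,x_1)$, where the sign is exactly the pseudovector orientation factor $\det P = -1$ attached to the odd permutations. The reflection $z \mapsto -z$, which fixes Elgindi's solution, is carried by $R$ to the reflection about the plane $x_1+x_2+x_3=0$, giving $\sigma$-mirror symmetry; and oddness is rotation invariant, since $u(-x,t) = R\,v(-R^{\mathsf T}x,t) = -R\,v(R^{\mathsf T}x,t) = -u(x,t)$. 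The quantitative conclusions are preserved because $R^{\mathsf T}$ is a measure-preserving isometry: Hölder norms are invariant under composition with an isometry, so $u \in C^{1,\alpha}$ with vorticity in $C^\alpha$; a change of variables gives $\|u(\cdot,t)\|_{L^2} = \|v(\cdot,t)\|_{L^2}$ and hence the energy equality; and $\|\nabla\times u(\cdot,t)\|_{L^\infty} = \|\nabla\times v(\cdot,t)\|_{L^\infty}$ (using $\det R = 1$ in the vorticity transformation $\nabla\times u = R\,(\nabla\times v)(R^{\mathsf T}\cdot)$) gives the divergent Beale--Kato--Majda integral with blowup time $t=1$.

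The main obstacle is the bookkeeping of the discrete symmetries: one must confirm that Elgindi's construction genuinely carries the $z$-reflection symmetry needed for oddness and $\sigma$-mirror symmetry (this is the natural symmetry class for his profile, with $\omega_\theta$ odd in $z$), and then track how the velocity as a vector and the vorticity as a pseudovector transform under each transposition so that the signs in the permutation relations come out exactly as stated. The cyclic part of the symmetry is immediate from axisymmetry and the reflection part is immediate from the swirl-free structure, so no new analytic estimates are required — the entire content lies in matching the finite symmetry group of this particular solution to the coordinate-permutation realization of $S_3$ and in checking that the rotation preserves the regularity, energy, and blowup integral.
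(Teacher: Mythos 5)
Your proposal is correct and is essentially identical to the paper's proof: the paper also takes Elgindi's $e_3$-axisymmetric, swirl-free, mirror-symmetric blowup solution, conjugates by an explicit $Q\in SO(3)$ sending $e_3$ to $\Tilde{\sigma}$, and invokes \Cref{OrthoEquivProp} and \Cref{AxisymPermuteProp} (the latter being exactly your observation that a swirl-free axisymmetric field about the diagonal is invariant under the full $O(2)$ and hence under $\mathcal{P}_3$). Your explicit verification of oddness via $v_r$ even and $v_z$ odd in $z$ is, if anything, slightly more detailed than the paper's treatment of that point.
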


\begin{remark}
    It should be noted that while it is very interesting that there are permutation symmetric solutions of the Euler equation in the class $C^{1,\alpha}$ that blowup in finite-time, these are not new blowup solutions of the Euler equation. 
    Rather we show that if we consider the $C^{1,\alpha}$ blowup soultions of the Euler equation developed by Elgindi \cite{Elgindi} and Elgindi, Ghoul, and Masmoudi \cite{ElgindiGhoulMasmoudi}, which are axisymmetric and swirl-free, and we take the axis of symmetric to be the axis $x_1=x_2=x_3$, rather than the $x_3$-axis, then these blowup solutions are permutation symmetric.
    What is interesting is that the permutation symmetric geometry is much less restrictive than axisymmetry, and so considering general permutation symmetric solutions may allow these arguments to apply to solutions with more regularity.
    Elgindi remarks in \cite{Elgindi} that: 
``We also remark, importantly, that while the methods
used here are applicable to axi-symmetric solutions without swirl, it is likely that they are also applicable in less rigid geometries and that in such settings one might be able to get much smoother solutions that develop singularities."
    The fact that, in the $C^{1,\alpha}$ case, Elgindi's arguments apply to the axisymmetric subspace of permutation symmetric solutions suggests that the permutation symmetric subspace without any axisymmetric condition is a potential candidate for this less rigid geometry. Furthermore, this is exactly the geometry in which the author proved finite-time blowup for smooth solutions of the Fourier-restricted Euler equation in \cite{MillerRestricted}, and these solutions did not posses any continuous symmetry.
\end{remark}

While we now have an evolution equation in terms of $\omega_1$, it is not immediately clear what our sign condition ought to be.
In the case of axisymmetric, swirl-free flows, the sign condition requires taking $\omega_\theta$ odd in $z$, with $\omega_\theta(r,z)\leq 0$ for $z\geq 0$ and $\omega_\theta(r,z)\geq 0$ for $z\leq 0$. Physically, this corresponds to two colliding, mirror symmetric, vortex rings.
In this case we find that the strain matrix $S_{ij}=\frac{1}{2}(\partial_iu_j+\partial_ju_i)$ has the structure
\begin{equation}
 S(0)=\lambda 
 \left(\begin{array}{ccc}
     1 & 0 & 0  \\
     0 &  1 & 0 \\
     0 & 0 &  -2
 \end{array}\right),
\end{equation}
where $\lambda>0$.
Note that this strain has two equal positive eigenvalues, and one negative eigenvalue. 
In work on the incompressible Navier--Stokes equation, Neustupa and Penel proved a scale critical regularity criterion on the positive part of the middle eigenvalue of the strain matrix \cites{NeustupaPenel1,NeustupaPenel2}.
The worst case of this regularity criterion is when $\lambda_2=\lambda_3>0$ and $\lambda_1=-2\lambda_3$.
These results, which were also revisited by the author in \cite{MillerStrain}, follow from an identity for enstrophy growth. For smooth solutions of the Euler equation, this identity has the form
\begin{equation}
\frac{\diff}{\diff t}\|S(\cdot,t)\|_{L^2}^2
=-4\int\det(S).
\end{equation}

In order to find the relevant sign condition for $\omega_1$, we will use the Biot-Savart law to compute the strain at the origin, finding that
\begin{equation} \label{StrainEqnIntro}
    S(0)
    =
    \lambda 
    \left( \begin{array}{ccc}
        0 & -1 & -1  \\
        -1 & 0 & -1  \\
        -1 & -1 & 0
    \end{array}\right),
    \end{equation}
    where
    \begin{equation}
        \lambda
    = \label{LambdaEqnIntro}
    \frac{3}{8\pi}\int_{\mathbb{R}^3}
    \left(\frac{(x_1+x_2+x_3)(x_2-x_3)}
    {|x|^5}\right)\omega_1(x) \diff x.
    \end{equation}
This matrix has the exact same $-2\lambda,\lambda,\lambda$ eigenvalue structure as the colliding vortex rings in the axisymmetric, swirl-free setting. In this case, the axis $x_1=x_2=x_3$ corresponds to the eigenvalue $-2\lambda$ and the plane $x_1+x_2+x_3=0$ corresponds to the (multiplicity two) eigenvalue $\lambda$.
Permutation symmetry guarantees that $\omega(0)=0$, and if we take an odd velocity field $u$, then we will have $u(0)=0$.
Consequently, when $\lambda>0$, there will be the possibility of a stagnation point blowup at the origin with unbounded planar stretching and axial compression.

\begin{remark}
    The identity for $\lambda$ will provide the sign condition that we need to identify singular vorticity candidates in the permutation symmetric setting.
    In order to generate as much planar stretching (and consequently vortex stretching) near the origin as possible, 
    the sign condition that we impose is that we want $\omega_1$ to have the same sign as 
    $(x_1+x_2+x_3)(x_2-x_3)$. As we will see later on, this can be achieved globally in space; however, our most natural examples will be axisymmetric, swirl-free, with the $x_1=x_2=x_3$ axis serving as the axis of symmetry, and this is a setting where the finite-time blowup of smooth solutions of the Euler equation is ruled out.
\end{remark}

Because vector fields that are odd, permutation symmetric, and mirror symmetric about the plane $x_1+x_2+x_3=0$ play such an important role in our analysis, we will refer to these vector fields as $\mathcal{G}_\sigma$-symmetric. Here the axis $x_1=x_2=x_3$ serves as the symmetry axis, but it is also worth considering what happens when this symmetry group is conjugated by a rotation that maps that axis $x_1=x_2=x_3$ to the $x_3$ axis.
We will say that a vector field $u$ is $\mathcal{G}$-symmetric if it is mirror symmetric in each of the coordinate axes and symmetric with respect to rotations by $\frac{\pi}{3}$ in the horizontal plane. 
More precise definitions of the $\mathcal{G}$ and $\mathcal{G}_\sigma$ symmetry groups will be given in the next subsection.
These two symmetries are in fact equivalent up to rotation.

\begin{theorem} \label{EquivThmIntro}
    Suppose $u\in C\left(\mathbb{R}^3;
    \mathbb{R}^3\right)$.
    Then $u$ is $\mathcal{G}$-symmetric if and only if $u^{Q_\sigma}$ is 
    $\mathcal{G}_\sigma$-symmetric,
    where
    \begin{equation}
    Q_\sigma
    =
    \left(\begin{array}{ccc}
\frac{1}{\sqrt{2}} & \frac{1}{\sqrt{6}} 
& \frac{1}{\sqrt{3}} \\
-\frac{1}{\sqrt{2}} & \frac{1}{\sqrt{6}} 
& \frac{1}{\sqrt{3}} \\
0 & -\frac{2}{\sqrt{6}} 
& \frac{1}{\sqrt{3}} \\
    \end{array}\right),
    \end{equation}
and
\begin{equation}
    u^{Q_\sigma}(x)
    =
    Q_\sigma
    u\left(Q_\sigma^{tr}x\right).
\end{equation}
\end{theorem}

There is an advantage to considering the symmetry group $\mathcal{G}$, because $\mathcal{G}$-symmetric, divergence free vector fields have a very natural expression as a Fourier series in cylindrical coordinates, with  Fourier modes in the $\theta$ variable in $6\mathbb{Z}^+$. In particular, we can see that axisymmetric, swirl-free vector fields with a mirror symmetry in the $z$ direction are a subspace of $\mathcal{G}$-symmetric vector fields from the leading order term in this Fourier series.

\begin{theorem} \label{FourierSeriesIntro}
    Suppose $u\in H^s\left(\mathbb{R}^3;
    \mathbb{R}^3\right), s>\frac{5}{2}$.
    Then $u$ is $\mathcal{G}$-symmetric and divergence free if and only if it can be expressed as a Fourier series in cylindrical coordinates by
    \begin{equation}
    u(x)=
    u_{r,0}(r,z)e_r
    +u_{z,0}(r,z)e_z
    +\sum_{n=1}^{\infty}
    u_{r,n}(r,z)\cos(6n\theta)e_r
    +u_{z,n}(r,z)\cos(6n\theta)e_z
    +u_{\theta,n}(r,z)
    \sin(6n\theta)e_\theta,
    \end{equation}
    where for all $n\in\mathbb{Z}^+$, $u_{r,n},u_{\theta,n}$ are even in $z$
    and $u_{z,n}$ is odd in $z$,
    and furthermore, due to the divergence free constraint,
    \begin{equation}
    \partial_r u_{r,0}
    +\frac{1}{r}u_{r,0}
    +\partial_z u_{z,0}
    =0,
    \end{equation}
    and for all $n\in\mathbb{N}$,
    \begin{equation}
    \partial_r u_{r,n}
    +\frac{1}{r}u_{r,n}
    +\partial_z u_{z,n}
    +\frac{6n}{r}u_{\theta,n}
    =0.
    \end{equation}
    Note that this means that $u_{\theta,n}$ is completely determined by $u_{r,n}, u_{z,n}$ with
    \begin{align}
    u_{\theta,n}
    &=
    -\frac{r}{6n}
    \left(\partial_r u_{r,n}
    +\frac{1}{r}u_{r,n}
    +\partial_z u_{z,n}\right) \\
    &=
    -\frac{1}{6n}
    \left(\partial_r(ru_{r,n})
    +\partial_z(ru_{z,n})\right),
    \end{align}
    and that $u_{\theta,0}=0$ by convention.
\end{theorem}

Because $\mathcal{G}$-symmetry is preserved by the dynamics of the Euler equation, that means that if we have any initial data of the form in \Cref{FourierSeriesIntro}, this will be preserved by the dynamics of the Euler equation, giving us an Anstaz to study the dynamics.

\begin{corollary} \label{FourierSeriesCorIntro}
    Suppose $u\in C\left([0,T_{max});H^s_{df}\right), s>\frac{5}{2}$ is a solution of the Euler with $\mathcal{G}$-symmetric initial data $u^0\in H^s_{df}$. Then  $u(\cdot,t)$ is $\mathcal{G}$-symmetric for all $0\leq t<T_{max}$; consequently, for all $0\leq t<T_{max}$,
    \begin{multline}
    u(x,t)=
    u_{r,0}(r,z,t)e_r
    +u_{z,0}(r,z,t)e_z \\
    +\sum_{n=1}^{\infty}
    u_{r,n}(r,z,t)\cos(6n\theta)e_r
    +u_{z,n}(r,z,t)\cos(6n\theta)e_z
    +u_{\theta,n}(r,z,t)
    \sin(6n\theta)e_\theta,
    \end{multline}
    where for all $n\in\mathbb{Z}^+,$ 
    $u_{r,n}, u_{\theta,n}$ are even in $z$
    and $u_{z,n}$ is odd in $z$,
    and furthermore
    \begin{equation}
    \partial_r u_{r,0}
    +\frac{1}{r}u_{r,0}
    +\partial_z u_{z,0}
    =0,
    \end{equation}
    and for all $n\in\mathbb{N}$,
    \begin{equation}
    \partial_r u_{r,n}
    +\frac{1}{r}u_{r,n}
    +\partial_z u_{z,n}
    +\frac{6n}{r}u_{\theta,n}
    =0.
    \end{equation}
\end{corollary}

\begin{remark}
    Elgindi and Jeong previously considered solutions of the Euler equation under a group of discrete symmetries related to the groups $\mathcal{G}$ and $\mathcal{G}_\sigma$,
    studying solutions that are permutation symmetric and mirror symmetric about each of the coordinate axes \cite{ElgindiJeongOctahedral}.
    They proved finite-time blowup for weak solutions of the Euler equation with a vorticity that is bounded and piece-wise H\"older continuous, but with jump discontinuities at the symmetry planes.
    In order for these results to apply to smooth or even $C^{1,\alpha}$ solutions of the Euler equation, additional assumptions would need to be imposed on the vorticity at the symmetry planes (i.e. the vanishing of the non-normal vorticity), as the result is based on constructing a vorticity in the whole space based on reflections of the vorticity from an octahedral domain.

    The advantage of the discrete symmetry groups $\mathcal{G}$ and $\mathcal{G}_\sigma$ over the discrete symmetry groups considered by Elgindi and Jeong in \cite{ElgindiJeongOctahedral}, is that the strain matrix at the origin automatically has a $-2\lambda,\lambda,\lambda$ eigenvalue structure at the origin, and so it is only necessary to maintain the sign condition $\lambda>0$ to obtain the most singular scenario predicted by Neustupa and Penel \cite{NeustupaPenel1}. The gradient at the origin must have the form
    \begin{align}
    \nabla u(\Vec{0}) &=
    \lambda\left(\begin{array}{ccc}
         0 & -1 & -1  \\
         -1 & 0 & -1 \\
         -1 & -1 & 0
    \end{array}\right) \\
    \nabla u(\Vec{0})  &=
    \lambda\left(\begin{array}{ccc}
         1 & 0 & 0  \\
         0 & 1 & 0 \\
         0 & 0 & -2
    \end{array}\right),
    \end{align}
    for $\mathcal{G}_\sigma$-symmetric and $\mathcal{G}$-symmetric solutions of the Euler equation respectively.
    On the contrary, smooth solutions of the Euler equation with the symmetries considered in \cite{ElgindiJeongOctahedral} must satisfy
    \begin{equation}
    \nabla u (\Vec{0}) =0,
    \end{equation}
    which means there will not be planar stretching/axial compression at the origin. It should also be noted that for $\mathcal{G}_\sigma$-symmetric solutions, the condition $\lambda>0$ provides a heuristic in searching for vorticities that generate singular dynamics based on \cref{LambdaEqnIntro}, which is not possible when the gradient vanishes at the origin, as must happen for smooth solutions with the discrete symmetries considered in \cite{ElgindiJeongOctahedral}.
\end{remark}

\begin{remark}
    There are some very interesting connections between solutions with the symmetry group $\mathcal{G}_\sigma$ considered in this paper and the very recent numerical study of potential Euler singularities by Protas and Zhao \cite{ProtasZhao}. Protas and Zhao studied the most singular dynamics of the three dimensional Euler equation on the torus  with a two step process: they solved the Euler equation numerically using pseudospectral methods, and they searched for initial data leading to the maximal amplification of the $H^3$ norm using a Riemannian conjugate gradient method. The optimization problem is nonconvex, and so there is the possibility of non-unique local maximizers. Nonetheless, for three of the four initial guesses considered, the Taylor-Green vortex, a random initial condition, and the Hou pure swirl initial condition, the same maximizer was obtained up to rotation and translation.

    The aspect that is the most interesting from our point of view is that the solution which exhibited maximum growth in \cite{ProtasZhao} has all of the symmetries of the group $\mathcal{G}_\sigma$, at least approximately. Protas and Zhao describe the geometry of the solution exhibiting the most singular behaviour as follows.
    ``We observe that throughout its evolution the
optimal flow has the form of two jets colliding head-on with the vorticity concentrating
into two strongly flattened vortex rings as time goes on. The region with large values
of $\log_{10}\left(|D|^3 u\right)$,
which is the quantity that matters in our objective functional,
evolves into a flat disc located in the middle of the two rings.
The vorticity field at the final time $t=75$ has three symmetry planes: $x_1=x_2, x_1=x_3$ and $x_2=x_3$, in addition to discrete rotation symmetries with respect to the body diagonal passing through the center of the two rings.'' 
Note that the vorticity having symmetry planes $x_1=x_2, x_1=x_3, x_2=x_3$ is a necessary and sufficient condition for a solution to be permutation symmetric, as we will see in \Cref{PermuteGenerateCor}.
The most singular solution considered in \cite{ProtasZhao} also exhibits planar stretching and axial compression at origin, just like that observed for our permutation symmetric solutions in \cref{StrainEqnIntro,LambdaEqnIntro},
and has a mirror symmetry about the plane perpendicular to the colliding jets, which means their most singular solution is also at least approximately $\sigma$-mirror symmetric.
Furthermore, a scenario in which there are colliding vortex rings, not quite axisymmetric but with a discrete rotational symmetry, is precisely the structure of the Ansatz for blowup suggested by \Cref{FourierSeriesCorIntro}.

What is the most remarkable about this fact is that these symmetries are in no way assumed by the numerical study in \cite{ProtasZhao}. The initial guesses for the most singular behaviour do not have these symmetries; it is only after an iteration scheme that searches for the most singular possible dynamics has converged that a solution with these symmetries is found. This suggests that the $\mathcal{G}_\sigma$-symmetric space of solutions of the Euler equation deserves further study for the finite-time blowup problem.
It should also be noted, however, that the most singular solution found in \cite{ProtasZhao} is not exactly permutation symmetric. For the initial data found to lead to the most singular dynamics, Protas and Zhao compute that
\begin{equation}
\|\omega_1\|_{L^2} \approx \|\omega_2\|_{L^2}
< \|\omega_3\|_{L^2},
\end{equation}
whereas for an exactly permutation symmetric solution these norms must all be equal, as we will see in \Cref{VortNormProp}. It would be interesting to apply the methods from \cite{ProtasZhao}, while restricting to exactly $\mathcal{G}_\sigma$-symmetric solutions,
to see if this geometric constraint would allow for more insight into the potential singularity formation. This could allow us to numerically explore the following question.
\end{remark}

\begin{question}
    Does there exist a $\mathcal{G}_\sigma$-symmetric, smooth solution of the incompressible Euler equation $u\in C^1\left([0,1);H^\infty_{df}\right)$ 
    that blows up in finite-time with
    \begin{equation}
    \int_0^1 \|\omega(\cdot,t)\|_{L^\infty}
    \diff t
    =+\infty?
    \end{equation}
\end{question}

\subsection{Definitions and notation}

In this section, we will give a number of key definitions of spaces and symmetries. We begin by defining the orthogonal group and the group of permutation matrices, along with permutation symmetry and some elementary properties.

\begin{definition}
    A $3\times 3$ matrix $Q\in O(3)$, the orthogonal group, if 
    \begin{equation}
    Q^{tr}Q=I_3.
    \end{equation}
    Note that this implies that the columns of $Q$,
    \begin{equation}
    v_i=Qe_i,
    \end{equation}
    form an orthonormal basis for $\mathbb{R}^3$.
\end{definition}

\begin{definition}
    We will take $\mathcal{P}_3\subset O(3)$
    to be the group of permutation matrices.
    $\mathcal{P}_3$ has six elements, the identity matrix
    \begin{equation}
    I_3=\left(\begin{array}{ccc}
         1 & 0 & 0  \\
         0 & 1 & 0  \\
         0 & 0 & 1
    \end{array}\right),
    \end{equation}
    three swap permutations
    \begin{align}
    P_{12}&=
    \left(\begin{array}{ccc}
         0 & 1 & 0  \\
         1 & 0 & 0  \\
         0 & 0 & 1
    \end{array}\right) \\
    P_{13}&=
    \left(\begin{array}{ccc}
         0 & 0 & 1  \\
         0 & 1 & 0  \\
         1 & 0 & 0
    \end{array}\right) \\
    P_{23}&=
    \left(\begin{array}{ccc}
         1 & 0 & 0  \\
         0 & 0 & 1  \\
         0 & 1 & 0
    \end{array}\right),
    \end{align}
    and two other permutations
    \begin{align}
     P_f&=
    \left(\begin{array}{ccc}
         0 & 0 & 1  \\
         1 & 0 & 0  \\
         0 & 1 & 0
    \end{array}\right) \\
    P_b&=
    \left(\begin{array}{ccc}
         0 & 1 & 0  \\
         0 & 0 & 1  \\
         1 & 0 & 0
    \end{array}\right)
    \end{align}
\end{definition}

\begin{definition}
    For all $u\in C\left(\mathbb{R}^3;
    \mathbb{R}^3\right)$
    and for all $Q\in O(3)$, define $u^Q$ by
    \begin{equation}
    u^Q(x)=Q u\left(Q^{tr}x\right).
    \end{equation}
\end{definition}

\begin{proposition} \label{OrthoComposeProp}
    For all $u\in C\left(\mathbb{R}^3;
    \mathbb{R}^3\right)$
    and for all $Q,Q'\in O(3)$,
    \begin{equation}
    \left(u^Q\right)^{Q'}
    =
    u^{Q'Q}.
    \end{equation}
    In particular, for all $Q\in O(3)$,
    \begin{equation}
    \left(u^Q\right)^{Q^{tr}}=u
    \end{equation}
\end{proposition}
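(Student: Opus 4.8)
The plan is to prove this by direct substitution, unwinding the definition of the transformation $u \mapsto u^Q$ twice. Since the action is defined pointwise by $u^Q(x) = Q u(Q^{tr} x)$, there is no analytic content here; the entire statement is a consequence of the associativity of matrix multiplication together with the identity $(Q'Q)^{tr} = Q^{tr} Q'^{tr}$. The one genuine point to get right is the order of composition: the map is contravariant in the argument and covariant in the prefactor, so I must be careful to apply the inner transformation first and track where each matrix lands.

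Concretely, I would fix $x \in \mathbb{R}^3$ and compute, starting from the outermost application. Writing $v = u^Q$ for brevity, the definition gives $v^{Q'}(x) = Q' v(Q'^{tr} x)$. Substituting the definition of $v = u^Q$ evaluated at the point $Q'^{tr} x$ yields
\begin{equation}
\left(u^Q\right)^{Q'}(x)
= Q' \, u^Q\!\left(Q'^{tr} x\right)
= Q' \, Q \, u\!\left(Q^{tr} Q'^{tr} x\right).
\end{equation}
I would then collapse the prefactor as $Q'Q$ and rewrite the argument using $Q^{tr} Q'^{tr} = (Q'Q)^{tr}$, so that the right-hand side becomes $(Q'Q)\, u\big((Q'Q)^{tr} x\big)$, which is exactly $u^{Q'Q}(x)$ by definition. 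Before invoking $u^{Q'Q}$ I should note that $SO(3)$ is a group, so $Q'Q \in SO(3)$ and the expression is well defined.

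For the second assertion I would simply specialize the first with $Q' = Q^{tr}$, which is legitimate because $Q^{tr} = Q^{-1} \in SO(3)$ whenever $Q \in SO(3)$. This gives $(u^Q)^{Q^{tr}} = u^{Q^{tr} Q} = u^{I_3}$, and it only remains to observe that $u^{I_3}(x) = I_3\, u(I_3^{tr} x) = u(x)$, so the composite transformation is the identity. I do not anticipate any real obstacle: the only thing that could go wrong is transposing the composition order, so the main care is purely notational bookkeeping rather than any substantive estimate.
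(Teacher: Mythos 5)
Your computation is correct and is exactly the standard unwinding of the definition that the paper has in mind; the paper simply leaves this verification to the reader as a linear algebra exercise. The one subtlety you flag — the contravariance in the argument versus covariance in the prefactor, which makes the composition come out as $Q'Q$ rather than $QQ'$ — is handled correctly.
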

\begin{proof}
    This is a linear algebra exercise left to the reader.
\end{proof}

\begin{definition}
    Let $Q\in O(3)$ be an orthogonal matrix. We will say that $Q$ is a symmetry of $u$ if
    \begin{equation}
    u^Q=u.
    \end{equation}
    We will also refer to $u$ as $Q$-symmetric in this case.
\end{definition}

\begin{definition} \label{PermSymDef}
    We will say that a vector field $u\in C\left(\mathbb{R}^3;\mathbb{R}^3\right)$
    is permutation symmetric if for all $P\in\mathcal{P}_3$,
    \begin{equation}
    u^P=u.
    \end{equation}
    Note that this is also a valid definition for vector fields with less regularity. For $u\in L^p\left(\mathbb{R}^3;
    \mathbb{R}^3\right)$,
    this definition of permutation symmetry still holds,
    but in this case the equality is almost everywhere $x\in\mathbb{R}^3$.
\end{definition}

\begin{definition}
    We will say that a vector field $w\in C\left(\mathbb{R}^3;
    \mathbb{R}^3\right)$ is permutation skew-symmetric, if
    \begin{equation}
    w=w^{P_f}=w^{P_b}
    =-w^{P_{12}}
    =-w^{P_{13}}
    =-w^{P_{23}}.
    \end{equation}
    Note that this condition can be stated equivalently as
    \begin{equation}
    w=\det(P)w^P,
    \end{equation}
    for all $P\in\mathcal{P}_3$.
\end{definition}

\begin{remark}
    We introduce the notion of permutation skew symmetric vector fields, because will will prove that a vector field $u$ satisfying. the divergence free constraint $\nabla\cdot u=0$ is permutation symmetric if and only if the associated vorticity $\omega=\nabla\times u$ is permutation skew symmetric.
\end{remark}

An important vector in the study of permutation symmetry is 
\begin{equation}
\sigma
=
\left(\begin{array}{c}
      1 \\ 1 \\ 1 
\end{array}\right),
\end{equation}
because the axis $\spn(\sigma)$ is the set of points that are invariant under all permutations. We will define $\Tilde{\sigma}$ to be the unit vector in this direction
\begin{equation}
\Tilde{\sigma}=\frac{1}{\sqrt{3}}\sigma.
\end{equation}

\begin{definition} \label{MirrorSymDef}
    For any unit vector $v\in \mathbb{R}^3, |v|=1$,
    we will define the reflection matrix $M_v$ to be
    \begin{equation}
    M_v=I_3- 2v\otimes v.
    \end{equation}
    Furthermore, we will say that a vector field $u\in C\left(\mathbb{R}^3;\mathbb{R}^3\right)$ is $v$-mirror symmetric if
    \begin{equation}
    u^{M_v}=u.
    \end{equation}
    Even though $\sigma$ is not a unit vector, we will say that a vector field is $\sigma$-mirror symmetric, if and only if it is $\Tilde{\sigma}$-mirror symmetric.
\end{definition}

\begin{definition}
    We will denote rotations in the horizontal plane by an angle $\theta$ as $R_\theta$, with
    \begin{equation}
    R_\theta
    =
    \left(\begin{array}{ccc}
        \cos(\theta) & -\sin(\theta) & 0 \\
         \sin(\theta) & \cos(\theta) & 0 \\
         0 & 0 & 1
    \end{array}\right).
    \end{equation}
\end{definition}

\begin{definition}
    Let $\mathcal{G} \subset O(3)$ be the group generated by $M_{e_1},M_{e_2},M_{e_3},
    R_{\frac{\pi}{3}}$.
\end{definition}

\begin{definition}
    Let $\mathcal{G}_\sigma \subset O(3)$ be the group generated by 
    $-I_3, M_{\Tilde{\sigma}},
    \mathcal{P}_3$.
\end{definition}

\begin{definition}
    For all $n\in\mathbb{N}$, let $\mathcal{R}_n$ be the group generated by $R_\frac{2\pi}{n}$,
    \begin{equation}
    \mathcal{R}_n
    =\left\{R_\frac{2\pi j}{n},
    0\leq j \leq n-1
    \right\}.
    \end{equation}
\end{definition}

\begin{definition}
    We will say that a vector field $u\in C\left(\mathbb{R}^3;\mathbb{R}^3\right)$,
    is odd if for all $x\in\mathbb{R}^3$
    \begin{equation}
        u(-x)=-u(x).
    \end{equation}
    We will say that $u$ is component-wise odd if for all $i\in\{1,2,3\}, u_i$ is odd in $x_i$ and even in $x_j$ for all $j\neq i$. 
\end{definition}

    \begin{remark}
    Note that every component-wise odd vector field is odd, but the converse does not hold. Also note that every $\mathcal{G}$-symmetric vector field is component-wise odd, because a vector field being component-wise odd is equivalent to being mirror symmetric with respect to the cardinal directions $e_1,e_2,e_3$.
    \end{remark}

\begin{remark}
    We will see that $\mathcal{G}$ and $\mathcal{G}_{\sigma}$ both have 24 elements, where
    \begin{align}
    \mathcal{G}
    &=
    \pm \mathcal{R}_3
    \cup
    \pm M_{e_1}\mathcal{R}_3
    \cup 
    \pm M_{e_3}\mathcal{R}_3
    \cup 
    \pm M_{e_3} M_{e_1}\mathcal{R}_3 \\
    \mathcal{G}_\sigma
    &=
    \pm \mathcal{P}_3 \cup 
    \pm M_{\Tilde{\sigma}} \mathcal{P}_3.
    \end{align}
\end{remark}

We now define homogeneous and inhomogeneous Sobolev spaces.

\begin{definition}
    For all $s\in\mathbb{R}$, we will take $H^s\left(\mathbb{R}^3\right)$ to be the Hilbert space with the norm
    \begin{equation}
    \|f\|_{H^s}=
    \left(\int_{\mathbb{R}^3}
    \left(1+4\pi^2|\xi|^2\right)^s
    \left|\hat{f}(\xi)\right|^2
    \diff\xi\right)^\frac{1}{2}.
    \end{equation}
    For all $-\frac{3}{2}<s<\frac{3}{2}$, we will take $\dot{H}^s\left(\mathbb{R}^3\right)$
    to be the Hilbert space with norm
    \begin{equation}
    \|f\|_{\dot{H}^s}=
    \left(\int_{\mathbb{R}^3}
    \left(4\pi^2|\xi|^2\right)^s
    \left|\hat{f}(\xi)\right|^2
    \diff\xi\right)^\frac{1}{2}.
    \end{equation}
    These two definitions give us a definition of the space $H^s\cap \dot{H}^{-1}$ for all $s>-1$, but we will nonetheless define the norm of this Hilbert space as follows:
    \begin{equation}
    \|f\|_{H^s\cap\dot{H}^{-1}}=
    \left(\int_{\mathbb{R}^3}
    \frac{\left(1+4\pi^2|\xi|^2\right)^{s+1}}{4\pi^2|\xi|^2}
    \left|\hat{f}(\xi)\right|^2
    \diff\xi\right)^\frac{1}{2}.
    \end{equation}
\end{definition}

\begin{remark}
Note that we use this definition for $H^s\cap \dot{H}^{-1}$, rather than the standard norm for the intersection of Hilbert spaces $\mathcal{H}_a\cap\mathcal{H}_b$
\begin{equation}
\|f\|_{\mathcal{H}_a\cap\mathcal{H}_b}
=\left(\|f\|_{\mathcal{H}_a}^2
+\|f\|_{\mathcal{H}_b}^2
\right)^\frac{1}{2},
\end{equation}
because $\nabla:H^{s+1} \to H^s\cap \dot{H}^{-1}$, and we want the isometry
\begin{equation}
\|\nabla f\|_{H^s\cap \dot{H}^{-1}}
=\|f\|_{H^{s+1}}.
\end{equation}
\end{remark}

\begin{definition}
    We define the the space of divergence free vector fields in $H^s$ by enforcing the constraint $\nabla\cdot u=0$ pointwise in Fourier space.
    Suppose $u\in H^s\left(\mathbb{R}^3;
    \mathbb{R}^3\right), s\geq 0$. Then $u\in H^s_{df}$ if and only if
    \begin{equation}
    \xi\cdot\hat{u}(\xi)=0,
    \end{equation}
    almost everywhere $\xi\in\mathbb{R}^3$.
\end{definition}

\begin{definition} \label{OneCompConstraintDef}
    For all $s \geq 0$, we will define the constraint space $H_*^s\cap\dot{H}^{-1}$ as follows;
    suppose $f\in H^s\cap\dot{H}^{-1}$.
    Then $f\in H_*^s\cap\dot{H}^{-1}$ if and only if for all $x\in\mathbb{R}^3$
    \begin{equation}
    f(x)=-f(P_{23}x),
    \end{equation}
    and for all $\xi\in\mathbb{R}^3$
    \begin{equation}
    \xi_1 \hat{f}(\xi) 
    -\xi_2 \hat{f}(P_{12}\xi)
    -\xi_3 \hat{f}(P_{13}\xi)
    =0.
    \end{equation}
\end{definition}

\begin{remark}
    Note that the above equality in Fourier space must hold almost everywhere $\xi\in\mathbb{R}^3$, and will hold pointwise if $f\in L^1$, while continuity means that the physical space constraint will hold everywhere.
    This constraint space is important, because it is precisely the space of functions $\omega_1$ that are the first components of the curl of permutation-symmetric, divergence-free vector fields. If we wish to study the the dynamics of permutation-symmetric vector fields by considering only the evolution equation for the first vorticity component, this is the relevant constraint space.
\end{remark}

\begin{definition}
    For all $u\in H^s_{df}, s>\frac{5}{2}$, we will take $A$ and $S$ to be the symmetric and anti-symmetric part of the gradient respectively, with
    \begin{align}
    A_{ij}&=\frac{1}{2}
    (\partial_i u_j-\partial_j u_i) \\
    S_{ij}&=\frac{1}{2}
    (\partial_i u_j+\partial_j u_i).
    \end{align}
    Note that the the vorticity $\omega$ is related to $A$ by
    \begin{equation} \label{VortMatrix}
    A=\frac{1}{2}\left
    (\begin{array}{ccc}
       0  & \omega_3 & -\omega_2  \\
        -\omega_3 &  0 & \omega_1 \\
        \omega_2 & -\omega_1 & 0
    \end{array}\right).
    \end{equation}
\end{definition}

\section{Vorticity and permutation symmetry}

We have defined permutation symmetry in terms of the velocity. In this section, we will consider how the vorticity behaves with respect to permutation symmetry. We will then use these results to understand the dynamics of permutation symmetric solutions of the Euler equation in terms of a single vorticity component, proving \Cref{OneVortCompThmIntro}.

\subsection{The vorticity under orthogonal transformations}

The first step, will be to understand how the vorticity behaves with respect to general orthogonal transformations.

\begin{proposition} \label{GradProp}
    For all $u\in C^1\left(\mathbb{R}^3;
    \mathbb{R}^3\right)$,
    \begin{equation}
    \nabla u^Q(x)=Q \nabla u\left(Q^{tr}x\right) Q^{tr}.
    \end{equation}
\end{proposition}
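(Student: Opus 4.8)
The plan is to verify the identity by a direct componentwise application of the chain rule, since $u^Q$ is defined as the composition $u^Q(x)=Q\,u(Q^{tr}x)$, in which the only nonlinear ingredient is $u$ itself; everything else is a fixed linear map. First I would fix a convention for the Jacobian, say $(\nabla u)_{ij}=\partial_i u_j$ (consistent with the strain matrix $S_{ij}=\frac{1}{2}(\partial_i u_j+\partial_j u_i)$ appearing earlier), and introduce the abbreviation $y=Q^{tr}x$. Writing $y_k=\sum_\ell Q_{\ell k}x_\ell$, the one computation that drives everything is $\frac{\partial y_k}{\partial x_m}=Q_{mk}$.

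Next I would expand the $i$-th component as $u^Q_i(x)=\sum_j Q_{ij}\,u_j(y)$ and differentiate in $x_m$. Because $u\in C^1$, the partial derivatives exist and the chain rule applies, giving $\partial_m u^Q_i(x)=\sum_{j,k}Q_{ij}\,(\partial_k u_j)(y)\,Q_{mk}$, where the factor $Q_{mk}$ is exactly the Jacobian entry of the inner map computed above. No analytic input beyond $C^1$-regularity and the chain rule is required at this stage.

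Finally I would reassemble the double sum as a matrix triple product. Recognizing $(\partial_k u_j)(y)=(\nabla u(y))_{kj}$ and $Q_{ij}=(Q^{tr})_{ji}$, the sum $\sum_{j,k}Q_{mk}\,(\nabla u(y))_{kj}\,(Q^{tr})_{ji}$ is precisely the $(m,i)$ entry of $Q\,\nabla u(y)\,Q^{tr}$, which gives $\nabla u^Q(x)=Q\,\nabla u(Q^{tr}x)\,Q^{tr}$ as claimed.

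The main, and essentially only, obstacle is bookkeeping: keeping the two summation indices $j,k$ straight and matching them against the row/column convention for $\nabla u$ so that the two copies of $Q$ and $Q^{tr}$ land in the correct order. It is worth noting as a sanity check that the stated identity is insensitive to the choice of Jacobian convention, since transposing the convention replaces $\nabla u$ by its transpose throughout and $(QAQ^{tr})^{tr}=QA^{tr}Q^{tr}$ preserves the form of the right-hand side; this lets one confirm the index placement without redoing the entire calculation.
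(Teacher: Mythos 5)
Your proposal is correct and follows essentially the same route as the paper: expand $u^Q_i(x)=\sum_j Q_{ij}u_j(Q^{tr}x)$, apply the chain rule using $\partial(Q^{tr}x)_k/\partial x_m=Q_{mk}$, and reassemble the double sum as the $(m,i)$ entry of $Q\,\nabla u(Q^{tr}x)\,Q^{tr}$. The index bookkeeping and the remark on convention-independence are both sound, so nothing is missing.
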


\begin{proof}
    This computation is classical, but is included for completeness.
    Recall that 
    \begin{equation}
    u^Q(x)=Qu\left(Q^{tr}x\right),
    \end{equation}
    and so
    \begin{equation}
    u^Q_j(x)=\sum_{k=1}^3 Q_{jk}u_k(Q^{tr}x).
    \end{equation}
    The chain rule then implies that
    \begin{align}
    \partial_i u^Q_j(x)
    &=
    \sum_{k,m=1}^3 
    Q_{jk} (\partial_m u_k)(Q^{tr}x) Q^{tr}_{mi} \\
    &=
    \sum_{k,m=1}^3 
    Q_{jk} Q_{im} (\nabla u)_{mk}(Q^{tr}x) \\
    &=
    \left(Q \nabla u(Q^{tr}x) Q^{tr}\right)_{ij},
    \end{align}
    and this completes the proof.
\end{proof}

\begin{proposition} \label{O3prop}
    For all $Q\in O(3)$
    \begin{equation}
    \det(Q)=\pm 1,
    \end{equation}
    and furthermore
    \begin{align}
    v_1\times v_2&=\det(Q) v_3 \\
    v_2\times v_3&=\det(Q) v_1 \\
    v_3\times v_1&=\det(Q) v_2,
    \end{align}
    where $v_i=Qe_i$.
\end{proposition}

\begin{proof}
    If $Q\in O(3)$, then the columns of $Q$ form an orthonormal basis for $\mathbb{R}^3$.
    This implies that $v_1$ and $v_2$ are othogonal unit vectors, and furthermore that
    \begin{align}
    |v_1\times v_2|&=1 \\
    v_1\times v_2 &\in \spn{v_3}.
    \end{align}
    Therefore we can see that
    \begin{equation}
    v_1\times v_2=\pm v_3.
    \end{equation}
    We know that 
    \begin{equation}
    \det(Q)=
    (v_1\times v_2)\cdot v_3,
    \end{equation}
    and so we can conclude that
    \begin{equation}
    \det(Q)=\pm 1,
    \end{equation}
    and that 
    \begin{equation}
    v_1\times v_2
    =\det(Q) v_3.
    \end{equation}
    The proofs of the identities for $v_2\times v_3$ and $v_3\times v_1$ are entirely analogous.
\end{proof}

\begin{theorem} \label{VortThmSO}
    For all $u\in C^1\left(\mathbb{R}^3;
    \mathbb{R}^3\right)$,
    \begin{equation}
    \nabla\times u^Q
    =
    \det(Q) \omega^Q.
    \end{equation}
\end{theorem}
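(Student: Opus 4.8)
The plan is to encode the curl in the antisymmetric part of the velocity gradient, so that the statement collapses to a single conjugation identity for orthogonal matrices. For $\omega\in\mathbb{R}^3$ let $[\omega]_\times$ denote the unique antisymmetric matrix with $[\omega]_\times v=\omega\times v$ for all $v$; the assignment $\omega\mapsto[\omega]_\times$ is a linear isomorphism onto the antisymmetric $3\times 3$ matrices, hence injective. A direct check against the convention $(\nabla u)_{ij}=\partial_i u_j$ used in \Cref{GradProp} shows that, when $\omega=\nabla\times u$,
\begin{equation}
[\omega]_\times=(\nabla u)^{tr}-\nabla u .
\end{equation}
Because the map is injective, it is enough to prove the matrix identity $\left[\nabla\times u^Q\right]_\times=\det(Q)\left[\omega^Q\right]_\times$, after which the vector statement follows immediately.

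Next I would substitute \Cref{GradProp}. Writing $\nabla u^Q(x)=Q\,(\nabla u)(Q^{tr}x)\,Q^{tr}$ and transposing gives $(\nabla u^Q)^{tr}(x)=Q\,(\nabla u)^{tr}(Q^{tr}x)\,Q^{tr}$, so that
\begin{equation}
\left[\nabla\times u^Q(x)\right]_\times=(\nabla u^Q)^{tr}(x)-\nabla u^Q(x)=Q\left[\omega(Q^{tr}x)\right]_\times Q^{tr}.
\end{equation}
The whole theorem therefore reduces to the conjugation identity
\begin{equation}
Q\,[\omega]_\times\,Q^{tr}=\det(Q)\,[Q\omega]_\times,
\end{equation}
valid for every orthogonal $Q$ (recall \Cref{SO3prop} gives $\det(Q)=\pm 1$); applied with $\omega$ replaced by $\omega(Q^{tr}x)$ and using $Q\omega(Q^{tr}x)=\omega^Q(x)$, it produces exactly $\det(Q)[\omega^Q(x)]_\times$, which is the desired identity.

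Finally I would establish the conjugation identity, which is equivalent to the transformation law $(Qa)\times(Qb)=\det(Q)\,Q(a\times b)$ for the cross product: testing $Q[\omega]_\times Q^{tr}$ against an arbitrary vector $w$ and writing $w=Qb$ reduces it to this law with $a=\omega$. This is precisely where \Cref{SO3prop} enters. Expanding $a=\sum_i a_i e_i$ and $b=\sum_j b_j e_j$ and using $Qe_i=v_i$, bilinearity gives $(Qa)\times(Qb)=\sum_{i,j}a_i b_j\,v_i\times v_j$, and \Cref{SO3prop}, together with the antisymmetry of the cross product, supplies $v_i\times v_j=\det(Q)\sum_k\epsilon_{ijk}v_k$; summing reconstitutes $\det(Q)\,Q(a\times b)$. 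I expect the only genuine subtlety to be the bookkeeping of signs and of the factor $\det(Q)$: this factor is the footprint of the fact that vorticity is an axial (pseudo)vector, and it is felt only when $Q$ reverses orientation, while everything else is mechanical substitution of \Cref{GradProp} and the injectivity of $\omega\mapsto[\omega]_\times$. (A purely index-based variant, applying $\nabla\times u^Q$ componentwise via $\epsilon_{ijk}Q_{jp}Q_{kq}=\det(Q)Q_{ir}\epsilon_{rpq}$, gives the same conclusion and matches the computational style of \Cref{GradProp}, but the antisymmetric-matrix route keeps the role of \Cref{SO3prop} most transparent.)
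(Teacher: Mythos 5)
Your proof is correct and follows essentially the same route as the paper: both encode the curl in the antisymmetric part of the velocity gradient, conjugate it via \Cref{GradProp}, and then reduce everything to the cross-product identity for the columns of $Q$ supplied by \Cref{SO3prop}. The only difference is presentational — you package the last step as the general law $(Qa)\times(Qb)=\det(Q)\,Q(a\times b)$ and invoke injectivity of $\omega\mapsto[\omega]_\times$, whereas the paper verifies the same identity by testing against the orthonormal basis $v_i=Qe_i$.
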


\begin{proof}
    We can see from \Cref{GradProp} that 
    \begin{equation}
    A^Q(x)=Q A\left(Q^{tr}x\right) Q^{tr},
    \end{equation}
    where $A^Q$ is the anti-symmetric part of $\nabla u^Q$.
    We need to show that for all $v\in \mathbb{R}^3$
    \begin{equation}
    A^Q(x) v=
    \frac{1}{2}\det(Q) Q\omega(Q^{tr}x)\times v.
    \end{equation}
    Because $Q\in O(3)$, we can see that 
    $\left\{v_i=Qe_i\right\}_{i=1}^3$ is an orthonormal basis for $\mathbb{R}^3$, and so therefore it suffices to show that for all $1\leq i \leq 3$
    \begin{equation}
    A^Q(x) v_i=
    \frac{1}{2}\det(Q) Q\omega(Q^{tr}x)\times v_i.
    \end{equation}

    We will begin with the computation for $v_1$
    We can compute that
    \begin{align}
    A^Q(x)v_1
    &=
    Q A\left(Q^{tr}x\right) Q^{tr}v_1 \\
    &=
    Q A\left(Q^{tr}x\right) e_1 \\
    &=
    \frac{1}{2} Q \omega\left(Q^{tr}x\right)\times e_1 \\
    &=
    \frac{1}{2} Q \left(\begin{array}{c}
         0 \\ \omega_3 \\ -\omega_2
    \end{array}\right)\left(Q^{tr}x\right) \\
    &=
    \frac{1}{2}\omega_3\left(Q^{tr}x\right)v_2
    -\frac{1}{2}\omega_2\left(Q^{tr}x\right)v_3.
    \end{align}
    Applying \Cref{O3prop}, we compute that
    \begin{align}
    \det(Q) Q\omega\left(Q^{tr}x\right)\times v_1
    &=
    \det(Q) \left(\omega_1\left(Q^{tr}x\right)v_1
    +\omega_2\left(Q^{tr}x\right)v_2
    +\omega_3\left(Q^{tr}x\right)v_3\right)
    \times v_1\\
    &=
    \det(Q)\omega_2\left(Q^{tr}x\right)v_2\times v_1
    +\det(Q)\omega_3\left(Q^{tr}x\right)v_3\times v_1 \\
    &=
    -\omega_2\left(Q^{tr}x\right) v_3
    +\omega_3\left(Q^{tr}x\right)v_2.
    \end{align}
    Therefore, we may conclude that
    \begin{equation}
    A^Q(x) v_1=
    \frac{1}{2}\det(Q) Q\omega(Q^{tr}x)\times v_1.
    \end{equation}
    The proof that
    \begin{align}
    A^Q(x) v_2
    &=
    \frac{1}{2}\det(Q) Q\omega(Q^{tr}x)\times v_2 \\
    A^Q(x) v_3
    &=
    \frac{1}{2}\det(Q) Q\omega(Q^{tr}x)\times v_3,
    \end{align}
    is entirely analogous and left to the reader.
    We know from \eqref{VortMatrix} that $\omega^Q$ is the unique vector that satisfies 
    \begin{equation}
    A^Q v=\frac{1}{2} \omega^Q\times v.
    \end{equation}
    By hypothesis $Q\in O(3)$, and so 
    $\{v_1,v_2,v_3\}$ is an orthonormal basis for $\mathbb{R}^3$, and consequently for all $\mathbb{v}\in\mathbb{R}^3$,
    \begin{equation}
    A^Q(x) v=
    \frac{1}{2}\det(Q) Q\omega(Q^{tr}x)\times v,
    \end{equation}
    and we may conclude that
    \begin{equation}
    \omega^Q(x)=
    \det(Q) Q\omega(Q^{tr}x).
    \end{equation}
\end{proof}

\begin{remark}
    This result is classical, but is included for the sake of completeness.
    In physical terms, \Cref{VortThmSO} says that right handed coordinate transformations ($\det(Q)=1$) preserve the orientation of the vorticity, while left handed coordinate transformations ($\det(Q)=-1$) flip the orientation of the vorticity.
\end{remark}

\begin{corollary} \label{SwapPermuteCor}
    For all $u\in C^1\left(\mathbb{R}^3;
    \mathbb{R}^3\right)$,
    \begin{align}
    \nabla\times u^{P_{12}}
    &=-\omega^{P_{12}} \\
    \nabla\times u^{P_{13}}
    &=-\omega^{P_{13}} \\
    \nabla\times u^{P_{23}}
    &=-\omega^{P_{23}}
    \end{align}
\end{corollary}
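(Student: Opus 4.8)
The plan is to derive this directly from \Cref{VortThmSO}, which gives $\nabla\times u^Q=\det(Q)\,\omega^Q$ for any orthogonal $Q$. The only content of the corollary beyond that theorem is the value of the determinant of each swap permutation, so the argument reduces to a short determinant computation together with a check that the hypotheses of \Cref{VortThmSO} apply.

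First I would note that each of $P_{12}$, $P_{13}$, $P_{23}$ is a permutation matrix, and hence orthogonal: its columns are a reordering of the standard basis $\{e_1,e_2,e_3\}$, so they remain orthonormal and $P^{tr}P=I_3$. This places each swap permutation in the domain of \Cref{VortThmSO}, and by \Cref{SO3prop} its determinant is $\pm 1$.

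Next I would compute the three determinants. Each swap permutation exchanges exactly two coordinate axes and fixes the third, so it is a single transposition and therefore has determinant $-1$. Concretely, for $P_{12}$ one can expand along the third row to obtain $\det(P_{12})=\det\begin{pmatrix} 0 & 1 \\ 1 & 0 \end{pmatrix}=-1$, and the computations for $P_{13}$ and $P_{23}$ are identical up to relabeling of indices.

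Finally I would substitute $Q=P_{12},P_{13},P_{23}$ into \Cref{VortThmSO}. Since $\det(P_{ij})=-1$ in each case, this yields $\nabla\times u^{P_{ij}}=-\omega^{P_{ij}}$, which is precisely the three claimed identities. There is no genuine obstacle here: the statement is a direct specialization of \Cref{VortThmSO}, and the only point requiring care is confirming that the swap permutations are orientation-reversing (determinant $-1$) rather than orientation-preserving, which is exactly what makes the sign on the right-hand side negative.
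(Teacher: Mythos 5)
Your proposal is correct and follows exactly the paper's own argument: observe that each swap permutation is a transposition with determinant $-1$, then specialize \Cref{VortThmSO}. The extra details you supply (orthogonality of permutation matrices, explicit cofactor expansion) are fine but not needed beyond what the paper already records.
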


\begin{proof}
    Observe that 
    \begin{equation}
    \det(P_{12})=\det(P_{13)}=\det(P_{23})=-1,
    \end{equation}
    and the result follows immediately from \Cref{VortThmSO}.
\end{proof}

\begin{corollary} \label{OtherPermuteCor}
    For all $u\in C^1\left(\mathbb{R}^3;
    \mathbb{R}^3\right)$,
    \begin{align}
    \nabla\times u^{P_f}
    &=\omega^{P_f} \\
    \nabla\times u^{P_b}
    &=\omega^{P_b}
    \end{align}
\end{corollary}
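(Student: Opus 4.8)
The plan is to deduce this directly from \Cref{VortThmSO}, exactly as \Cref{SwapPermuteCor} was deduced. \Cref{VortThmSO} gives the general identity $\nabla\times u^Q = \det(Q)\,\omega^Q$ for every $Q\in SO(3)$, so the only thing left to establish is the value of $\det(P_f)$ and $\det(P_b)$. Once these determinants are pinned down, the two claimed identities follow by substituting $Q=P_f$ and $Q=P_b$.

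The key step is therefore to verify that $\det(P_f)=\det(P_b)=+1$. I would do this by reading off the columns of each matrix and applying the formula $\det(Q)=(v_1\times v_2)\cdot v_3$ from \Cref{SO3prop}, or more simply by recognizing $P_f$ and $P_b$ as the two cyclic (three-cycle) permutations, which are even and hence have determinant $+1$. Concretely, $P_f$ sends $e_1\mapsto e_2\mapsto e_3\mapsto e_1$ and $P_b$ is its inverse cycle; a three-cycle is a product of two transpositions, so each contributes a determinant of $(-1)^2=+1$. This contrasts with the swap permutations $P_{12},P_{13},P_{23}$ of \Cref{SwapPermuteCor}, which are single transpositions and thus have determinant $-1$, producing the sign flip there.

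Having computed $\det(P_f)=\det(P_b)=1$, I would simply substitute into \Cref{VortThmSO} to obtain
\begin{align}
\nabla\times u^{P_f} &= \det(P_f)\,\omega^{P_f}=\omega^{P_f} \\
\nabla\times u^{P_b} &= \det(P_b)\,\omega^{P_b}=\omega^{P_b},
\end{align}
which is exactly the statement. There is no real obstacle here: the corollary is an immediate specialization of the already-proved general theorem, and the entire content is the elementary determinant computation for the two cyclic permutation matrices. The only thing to be careful about is not to confuse the cyclic permutations with the transpositions, since they carry opposite signs and hence opposite conclusions about whether the curl operation preserves or reverses the orientation of the vorticity.
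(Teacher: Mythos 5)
Your proposal is correct and follows exactly the same route as the paper: observe that $P_f$ and $P_b$ are even permutations, so $\det(P_f)=\det(P_b)=1$, and then specialize \Cref{VortThmSO}. The extra justification you give for the determinant computation (three-cycle as a product of two transpositions) is fine and only elaborates on what the paper states without proof.
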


\begin{proof}
    Observe that 
    \begin{equation}
    \det(P_f)=\det(P_b)=1,
    \end{equation}
    and the result follows immediately from \Cref{VortThmSO}.
\end{proof}

\begin{remark}
    Note that \Cref{SwapPermuteCor,OtherPermuteCor}
    give a parity result for the behaviour of the vorticity with respect to permuations. For permutations with even parity, $\nabla \times u^P=\omega^P$, and for permutations with odd parity, $\nabla\times u^P
    =-\omega^P$.
\end{remark}

\begin{remark}
    Note that \Cref{VortThmSO} and \Cref{SwapPermuteCor,OtherPermuteCor} also apply in $H^1$ with the caveat that the identities then hold almost everywhere rather than pointwise.
\end{remark}

We will now consider the behaviour of the Fourier transform and the inverse Laplacian with respect to permutation symmetry.

\begin{proposition} \label{FourierProp}
    For all $u\in L^2\left(\mathbb{R}^3;
    \mathbb{R}^3\right)$ and for all $Q\in O(3)$,
    \begin{equation}
    \widehat{u^Q}
    =
    \hat{u}^Q.
    \end{equation}
\end{proposition}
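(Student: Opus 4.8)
The plan is to reduce the vector identity to the familiar scalar transformation rule for the Fourier transform under an orthogonal change of variables, and then reassemble the vector components using linearity. Throughout, recall that $u^Q(x)=Qu(Q^{tr}x)$, so that, reading the right-hand side $\hat u^Q$ as the same operation applied to the (vector-valued) Fourier transform, $\hat u^Q(\xi)=Q\hat u(Q^{tr}\xi)$; the claim is thus that $\widehat{u^Q}(\xi)=Q\hat u(Q^{tr}\xi)$.

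First I would record the scalar fact: for any $g\in L^2(\mathbb{R}^3)$ and any orthogonal matrix $A$, one has $\widehat{g(A\,\cdot)}(\xi)=\hat g(A\xi)$. For Schwartz $g$ this is immediate from the substitution $y=Ax$ in the defining integral: the Jacobian satisfies $|\det A|=1$, and since orthogonality gives $A^{-tr}=A$ we get $x\cdot\xi=A^{-1}y\cdot\xi=y\cdot A^{-tr}\xi=y\cdot A\xi$, so that
\begin{equation}
\widehat{g(A\,\cdot)}(\xi)=\int_{\mathbb{R}^3}e^{-2\pi i x\cdot\xi}g(Ax)\diff x=\int_{\mathbb{R}^3}e^{-2\pi i y\cdot A\xi}g(y)\diff y=\hat g(A\xi).
\end{equation}
Specializing to $A=Q^{tr}$, which is again orthogonal because $Q\in SO(3)$, yields $\widehat{g(Q^{tr}\,\cdot)}(\xi)=\hat g(Q^{tr}\xi)$.

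Next I would assemble the vector identity componentwise. Writing $u^Q_j(x)=\sum_{k=1}^3 Q_{jk}\,u_k(Q^{tr}x)$ and using the linearity of the Fourier transform together with the scalar rule applied to each $g=u_k$,
\begin{equation}
\widehat{u^Q_j}(\xi)=\sum_{k=1}^3 Q_{jk}\,\widehat{u_k(Q^{tr}\,\cdot)}(\xi)=\sum_{k=1}^3 Q_{jk}\,\hat u_k(Q^{tr}\xi)=\left(Q\hat u(Q^{tr}\xi)\right)_j=\hat u^Q(\xi)_j,
\end{equation}
which is the desired identity.

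The only genuine subtlety, and hence the step I would treat most carefully, is that for merely $u\in L^2$ the Fourier transform is defined in the Plancherel sense rather than by the convergent integral above, so the pointwise change of variables is not literally available. I would resolve this by the standard density argument: both maps $g\mapsto\widehat{g(Q^{tr}\,\cdot)}$ and $g\mapsto\hat g(Q^{tr}\,\cdot)$ are bounded on $L^2$ (indeed isometries, since $|\det Q^{tr}|=1$ makes precomposition with $Q^{tr}$ unitary and the Fourier transform is unitary), and they agree on the dense subspace of Schwartz functions by the computation above, so they agree on all of $L^2$. With this in hand the componentwise assembly goes through verbatim, the identity holding almost everywhere in $\xi$.
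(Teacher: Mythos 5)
Your proof is correct and follows essentially the same route as the paper's: a change of variables $y=Q^{tr}x$ in the Fourier integral, the identity $\xi\cdot Qy=(Q^{tr}\xi)\cdot y$, and a density argument to pass from a nice dense class to all of $L^2$ (the paper uses $L^1\cap L^2$ where you use Schwartz functions, and works with the vector integral directly rather than componentwise, but these are cosmetic differences). Your explicit remark that both maps are $L^2$ isometries, which justifies the density step, is a point the paper leaves implicit.
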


    \begin{proof}
    It is straightforward to compute that for all $u\in L^1\cap L^2$
    \begin{align}
    \widehat{u^Q}(\xi)
    &=
    \int_{\mathbb{R}^3}
    Qu(Q^{tr}x) e^{-2\pi i \xi\cdot x}
    \diff x \\
    &=
    Q\int_{\mathbb{R}^3}
    u(y) e^{-2\pi i \xi\cdot Qy}
    \diff y,
    \end{align}
    making the substitution $y=Q^{tr}x$.
    Next observe that
    \begin{align}
    \xi\cdot Qy
    &=
    \sum_{i,j=1}^3 \xi_i Q_{ij} y_j \\
    &=
    y\cdot Q^{tr}\xi,
    \end{align}
    and so
    \begin{align}
    \widehat{u^Q}(\xi)
    &=
    Q\int_{\mathbb{R}^3}
    u(y) e^{-2\pi i (Q^{tr}\xi)\cdot y}
    \diff y \\
    &=
    \hat{u}^P(\xi).
    \end{align}
    This completes the proof for $u\in L^1\cap L^2$, and the result in general follows because $L^1\cap L^2$ is dense in $L^2$.
    \end{proof}

    \begin{corollary}
    For all $u\in L^2\left(\mathbb{R}^3\right)$,
    $u$ is permutation symmetric, if and only if $\hat{u}$ is permutation symmetric.
    \end{corollary}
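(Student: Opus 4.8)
The plan is to deduce the corollary directly from \Cref{FourierProp}, together with the fact that each permutation matrix is orthogonal, so that the proposition applies to every $P\in\mathcal{P}_3$. Recall that by definition $u$ is permutation symmetric precisely when $u^P=u$ for all $P\in\mathcal{P}_3$, and likewise $\hat u$ is permutation symmetric precisely when $\hat u^P=\hat u$ for all $P\in\mathcal{P}_3$. The entire argument is therefore a matter of transporting these six identities across the Fourier transform.

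For the forward implication, I would suppose $u$ is permutation symmetric, fix an arbitrary $P\in\mathcal{P}_3$, apply the Fourier transform to the identity $u^P=u$, and invoke \Cref{FourierProp} to obtain
\begin{equation}
\hat u^P = \widehat{u^P} = \hat u.
\end{equation}
Since $P$ was arbitrary, $\hat u$ satisfies $\hat u^P=\hat u$ for every $P\in\mathcal{P}_3$, which is exactly the statement that $\hat u$ is permutation symmetric.

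For the converse, I would suppose $\hat u$ is permutation symmetric and again fix $P\in\mathcal{P}_3$. Then \Cref{FourierProp} gives $\widehat{u^P}=\hat u^P=\hat u$, so $u^P$ and $u$ have the same Fourier transform. Because the Fourier transform is an injective (indeed unitary) map on $L^2\left(\mathbb{R}^3\right)$, this forces $u^P=u$ in $L^2$, i.e.\ almost everywhere. As $P$ was arbitrary, $u$ is permutation symmetric.

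There is essentially no obstacle here: the two points meriting a word of care are that \Cref{FourierProp} may legitimately be applied to each permutation matrix (each lies in the orthogonal group, and the proof of \Cref{FourierProp} uses only $Q^{tr}=Q^{-1}$, never $\det Q=1$), and that all equalities should be read almost everywhere, consistent with the $L^p$ definition of permutation symmetry. Injectivity of the Fourier transform on $L^2$ is precisely what lets the argument run symmetrically in both directions.
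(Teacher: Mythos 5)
Your proposal is correct and follows exactly the route the paper intends: the paper's own proof simply states that the corollary follows immediately from \Cref{FourierProp}, and your write-up supplies the straightforward details (applying the proposition to each $P\in\mathcal{P}_3$ in the forward direction and using injectivity of the Fourier transform on $L^2$ for the converse). Your remark that the proof of \Cref{FourierProp} only uses $Q^{tr}=Q^{-1}$ and not $\det Q=1$ is a sensible point of care, consistent with the paper's (slightly loose) use of $SO(3)$ for the orthogonal group.
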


    \begin{proof}
    This follows immediately from \Cref{FourierProp}.
    \end{proof}

\begin{lemma} \label{LaplaceLemma}
    For all $f\in L^2$ and for all 
    $Q\in O(3)$,
    \begin{equation}
    (-\Delta)^{-1}\left(u^Q\right)
    =
    \left((-\Delta)^{-1}u\right)^Q.
    \end{equation}
\end{lemma}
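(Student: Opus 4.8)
The plan is to pass to the Fourier side and exploit the fact that $(-\Delta)^{-1}$ acts there as multiplication by the radial symbol $\frac{1}{4\pi^2|\xi|^2}$, which is invariant under the orthogonal action $\xi\mapsto Q^{tr}\xi$. The workhorse is \Cref{FourierProp}, which tells us that the Fourier transform intertwines the operation $u\mapsto u^Q$ with itself, namely $\widehat{u^Q}=\hat{u}^Q$.

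First I would record the Fourier representation of the inverse Laplacian,
\begin{equation}
\widehat{(-\Delta)^{-1}u}(\xi)
=\frac{1}{4\pi^2|\xi|^2}\hat{u}(\xi),
\end{equation}
and then compute the Fourier transform of each side of the claimed identity. For the left-hand side, combining this representation with \Cref{FourierProp} gives
\begin{equation}
\widehat{(-\Delta)^{-1}\left(u^Q\right)}(\xi)
=\frac{1}{4\pi^2|\xi|^2}\widehat{u^Q}(\xi)
=\frac{1}{4\pi^2|\xi|^2}\,Q\,\hat{u}\left(Q^{tr}\xi\right).
\end{equation}
For the right-hand side, setting $g=(-\Delta)^{-1}u$ and applying \Cref{FourierProp} again, I would compute
\begin{equation}
\widehat{g^Q}(\xi)
=Q\,\hat{g}\left(Q^{tr}\xi\right)
=\frac{1}{4\pi^2\left|Q^{tr}\xi\right|^2}\,Q\,\hat{u}\left(Q^{tr}\xi\right).
\end{equation}

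The single fact that makes the two expressions coincide is the orthogonal invariance of the Euclidean norm: since $Q\in SO(3)$ we have $\left|Q^{tr}\xi\right|=|\xi|$, so the symbol $\frac{1}{4\pi^2|\xi|^2}$ is unchanged after composition with $Q^{tr}$. This identifies the two Fourier transforms, and injectivity of the Fourier transform then yields the stated identity. There is essentially no analytic obstacle here; the only point requiring a little care is the well-definedness of $(-\Delta)^{-1}u$ near $\xi=0$ for general $u\in L^2$, which I would handle by reading the identity as an equality of tempered distributions (or by restricting to the dense class where $\frac{\hat{u}(\xi)}{|\xi|^2}\in L^2_{\mathrm{loc}}$ and approximating), exactly as the homogeneous Sobolev framework of the preceding definitions already anticipates.
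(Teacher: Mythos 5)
Your proof is correct, but it takes a genuinely different route from the paper. The paper works entirely in physical space: it writes $(-\Delta)^{-1}u$ as convolution with the Newtonian potential $\frac{1}{4\pi|x-y|}$, substitutes $z=Q^{tr}y$, and uses the kernel's rotational invariance $|x-Qz|=|Q^{tr}x-z|$ to pull the transformation through the integral. You instead pass to the Fourier side, where $(-\Delta)^{-1}$ is multiplication by the radial symbol $\frac{1}{4\pi^2|\xi|^2}$, and invoke \Cref{FourierProp} together with $|Q^{tr}\xi|=|\xi|$. The two arguments are mirror images of one another — each rests on the rotational invariance of the relevant object (kernel versus symbol) — but yours has a small advantage: it makes explicit the well-definedness issue of $(-\Delta)^{-1}u$ for general $u\in L^2$ (the symbol $|\xi|^{-2}$ is not integrable against $|\hat{u}|^2$ near the origin in general), which the paper's convolution formula silently glosses over, and your suggested fixes (tempered distributions, or density of the class where $\frac{\hat{u}(\xi)}{|\xi|^2}\in L^2_{\mathrm{loc}}$) are exactly the right way to patch it. Your approach also reuses machinery the paper has already established (\Cref{FourierProp}), whereas the paper's proof is self-contained at the cost of an extra change-of-variables computation. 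Either proof is acceptable.
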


\begin{proof}
    We may compute that 
    \begin{align}
    (-\Delta)^{-1}\left(u^Q\right)(x)
    &=
    \int_{\mathbb{R}^3}
    \frac{1}{4\pi |x-y|}
    Q u(Q^{tr}y) \diff y \\
    &=
    Q\int_{\mathbb{R}^3}
    \frac{1}{4\pi |x-Qz|}
    u(z) \diff z \\
    &=
    Q\int_{\mathbb{R}^3}
    \frac{1}{4\pi |Q^{tr}x-z|} 
    u(z) \diff z \\
    &=
    Q(-\Delta)^{-1}u\left(Q^{tr}x\right),
    \end{align}
    where we have taken the substitution $z=Q^{tr}y$ and used the fact that
    $|x-Qz|=|Q^{tr}x-z|$.
\end{proof}

Because the velocity is completely determined by the vorticity by the Biot-Savart law, it is reasonable to expect that permutation symmetry should be able to be expressed as a condition on the vorticity.
With the preliminary results now proven, we can give a necessary and sufficient condition on the vorticity, in order that a velocity field be permutation symmetric.
First, we will state the analogous result for general symmetries.

\begin{theorem} \label{VortGeneralSymThm}
    A vector field $u\in H^s_{df}, s>\frac{5}{2}$ is $Q$-symmetric if and only if
    \begin{equation}
    \omega=\det(Q)\omega^Q.
    \end{equation}
\end{theorem}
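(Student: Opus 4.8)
The plan is to prove both implications using \Cref{VortThmSO}, which tells us that the vorticity of $u^Q$ is exactly $\nabla\times u^Q=\det(Q)\omega^Q$. This single identity does almost all the work; the only analytic input needed beyond it is a uniqueness fact for the curl--divergence system.

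First I would handle the forward direction, which is immediate. If $u$ is $Q$-symmetric, so that $u^Q=u$, then taking the curl of both sides and applying \Cref{VortThmSO} gives
\begin{equation}
\omega=\nabla\times u=\nabla\times u^Q=\det(Q)\omega^Q,
\end{equation}
which is the desired identity.

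The reverse direction is the substantive part. Assume $\omega=\det(Q)\omega^Q$. By \Cref{VortThmSO} the right-hand side is precisely $\nabla\times u^Q$, so
\begin{equation}
\nabla\times u^Q=\det(Q)\omega^Q=\omega=\nabla\times u,
\end{equation}
and hence $\nabla\times\left(u^Q-u\right)=0$. Next I would show that $u^Q$ is divergence-free: using \Cref{GradProp}, $\nabla u^Q(x)=Q\nabla u\left(Q^{tr}x\right)Q^{tr}$, and taking the trace together with the cyclic property of the trace and $Q^{tr}Q=I_3$ yields $\nabla\cdot u^Q(x)=\tr\left(\nabla u\right)\left(Q^{tr}x\right)=(\nabla\cdot u)\left(Q^{tr}x\right)=0$. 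Therefore $v:=u^Q-u$ is simultaneously curl-free and divergence-free.

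The final step --- which I expect to be the main obstacle, even though it is short --- is to conclude that a vector field that is both curl-free and divergence-free and lies in $H^s\subset L^2$ must vanish. The cleanest route is Fourier analysis: since $v\in L^2$, the two conditions become $\xi\times\hat{v}(\xi)=0$ and $\xi\cdot\hat{v}(\xi)=0$ for almost every $\xi$. The first forces $\hat{v}(\xi)$ to be parallel to $\xi$ while the second forces it to be orthogonal to $\xi$, so $\hat{v}(\xi)=0$ for $\xi\neq 0$, whence $v=0$. (Equivalently, $\Delta v=\nabla(\nabla\cdot v)-\nabla\times(\nabla\times v)=0$ makes each component a harmonic $L^2$ function, which must vanish by a Liouville argument.) This gives $u^Q=u$, completing the proof. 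The regularity assumption $s>\frac{5}{2}$ guarantees $u\in C^1$ via Sobolev embedding, so that the pointwise curl and divergence computations underlying \Cref{VortThmSO} and \Cref{GradProp} are justified classically.
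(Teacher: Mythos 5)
Your proof is correct, and the forward direction is identical to the paper's. The reverse direction, however, takes a genuinely different route. The paper starts from the Biot--Savart representation $u=\nabla\times(-\Delta)^{-1}\omega$, substitutes $\omega=\det(Q)\omega^Q$, and then commutes the inverse Laplacian past the $Q$-conjugation using \Cref{LaplaceLemma} and applies \Cref{VortThmSO} once more, so that the factor $\det(Q)^2=1$ yields $u=u^Q$ directly. You instead form $v=u^Q-u$, verify that it is simultaneously curl-free (via \Cref{VortThmSO}) and divergence-free (via \Cref{GradProp} and the trace computation), and kill it with the $L^2$ uniqueness of the div--curl system on Fourier side. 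The two arguments rest on the same underlying fact --- that a divergence-free $L^2$ field is uniquely determined by its curl --- but package it differently: your version dispenses with \Cref{LaplaceLemma} entirely and isolates the uniqueness statement cleanly, while the paper's version is more constructive and reuses machinery (the Biot--Savart law and \Cref{LaplaceLemma}) that it needs elsewhere anyway. One small point worth making explicit in your write-up is that $u^Q\in L^2$ with $\left\|u^Q\right\|_{L^2}=\|u\|_{L^2}$, since $Q^{tr}$ is measure-preserving; this is what licenses applying the Fourier argument to $v$.
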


\begin{proof}
    Suppose $u=u^Q$. Then taking the curl of each side of this equation and applying \Cref{VortThmSO}, we can see that
    \begin{align}
    \omega
    &=
    \nabla \times u^Q \\
    &=
    \det(Q)\omega^Q.
    \end{align}
    This completes the proof of the forward direction.
    Now suppose that
    \begin{equation}
    \omega=\det(Q)\omega^Q.
    \end{equation}
    Using the Biot-Savart law,
    $u=\nabla\times(-\Delta)^{-1}\omega$,
    and applying \Cref{LaplaceLemma} and \Cref{VortThmSO}, we find that
    \begin{align}
    u
    &=
    \nabla \times (-\Delta)^{-1}\omega \\
    &=
    \det(Q) \nabla\times (-\Delta)^{-1}
    \left(\omega^Q\right) \\
    &=
    \det(Q) \nabla \times 
    \left((-\Delta)^{-1}\omega\right)^Q \\
    &=
    \det(Q)^2 \left(\nabla\times
    (-\Delta)^{-1}\omega\right)^Q \\
    &=
    u^Q,
    \end{align}
    where we have used the fact from \Cref{O3prop} that
    $\det(Q)=\pm 1$. This completes the proof.
\end{proof}

\begin{theorem} \label{VortPermSymThm}
    A vector field $u\in H^s_{df}, s>\frac{5}{2}$ is permutation symmetric if and only if
    \begin{align}
    \omega
    &=
    \omega^{P_f} \\
    &=
    \omega^{P_b} \\
    &=-\omega^{P_{12}} \\
    &=-\omega^{P_{13}} \\
    &=-\omega^{P_{23}}.
    \end{align}
\end{theorem}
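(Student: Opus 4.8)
The plan is to recognize that this theorem is an immediate consequence of \Cref{VortGeneralSymThm}, applied in turn to each element of the permutation group $\mathcal{P}_3$. By definition, $u$ is permutation symmetric precisely when $u^P=u$ for every $P\in\mathcal{P}_3$. Since $u^{I_3}=u$ holds automatically, this is equivalent to requiring that $u$ be $P$-symmetric for each of the five nontrivial permutations $P_{12}$, $P_{13}$, $P_{23}$, $P_f$, $P_b$ simultaneously. Note that the regularity hypothesis $s>\frac{5}{2}$ matches that of \Cref{VortGeneralSymThm}, so the general result is directly applicable, and since then $\omega\in H^{s-1}$ with $s-1>\frac{3}{2}$, the vorticity is continuous and the identities may be read pointwise.

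First I would dispatch the forward direction. Assuming $u$ is permutation symmetric, $u$ is in particular $P$-symmetric for each nontrivial $P$. Applying \Cref{VortGeneralSymThm} to each such $P$ and substituting the determinant values $\det(P_{12})=\det(P_{13})=\det(P_{23})=-1$ together with $\det(P_f)=\det(P_b)=1$, as recorded in the proofs of \Cref{SwapPermuteCor} and \Cref{OtherPermuteCor}, yields exactly the five identities
\begin{equation}
\omega=\omega^{P_f}=\omega^{P_b}=-\omega^{P_{12}}=-\omega^{P_{13}}=-\omega^{P_{23}}.
\end{equation}

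For the reverse direction I would run the same argument backwards: given the five vorticity identities, each one is precisely the condition $\omega=\det(P)\omega^P$ for the corresponding permutation $P$, so the reverse implication of \Cref{VortGeneralSymThm} shows that $u$ is $P$-symmetric for each nontrivial $P$. Combined with the trivial $I_3$-symmetry, this gives $u^P=u$ for all $P\in\mathcal{P}_3$, which is exactly permutation symmetry.

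Since the whole argument is bookkeeping on top of \Cref{VortGeneralSymThm}, there is no serious obstacle; the only point requiring care is tracking the parity, and hence the determinant, of each permutation so that the signs in the five identities come out correctly. I would also remark that the list could in principle be shortened: because $\mathcal{P}_3\cong S_3$ is generated by a single transposition together with a $3$-cycle, \Cref{OrthoComposeProp} shows that symmetry under two generators propagates to the entire group, so verifying only $\omega=-\omega^{P_{12}}$ and $\omega=\omega^{P_f}$ would already suffice. Nonetheless, since the statement displays all five conditions explicitly, I would simply verify each of them directly via \Cref{VortGeneralSymThm}.
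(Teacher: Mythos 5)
Your proposal is correct and matches the paper's own argument: both deduce the theorem as an immediate corollary of \Cref{VortGeneralSymThm} applied to each of the five nontrivial permutations, using $\det(P_{12})=\det(P_{13})=\det(P_{23})=-1$ and $\det(P_f)=\det(P_b)=1$ to fix the signs, with both directions supplied by the if-and-only-if in the general theorem. Your closing remark about generators is also consistent with the paper, which records that observation separately in \Cref{PermuteGenerateThm} and \Cref{PermuteGenerateCor}.
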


\begin{proof}
    This result follows as a fairly immediate corollary of \Cref{VortGeneralSymThm}.
    Recalling that
    \begin{equation}
    \det(P_{12})=\det(P_{13})=\det(P_{23})=-1,
    \end{equation}
    we can see that:
    $u=u^{P_{12}}$ 
    if and only if
    $\omega=-\omega^{P_{12}}$;
    $u=u^{P_{13}}$ 
    if and only if
    $\omega=-\omega^{P_{13}}$;
    $u=u^{P_{23}}$ 
    if and only if
    $\omega=-\omega^{P_{23}}$.
    Likewise recalling that
    \begin{equation}
    \det(P_f)=\det(P_b)=1,
    \end{equation}
    we can see that:
    $u=u^{P_f}$
    if and only if
    $\omega=\omega^{P_f}$;
    $u=u^{P_b}$
    if and only if
    $\omega=\omega^{P_b}$.
    This completes the proof.
\end{proof}

\begin{corollary}
    Let $v\in \mathbb{R}^3, |v|=1$ be a unit vector.
    A vector field $u\in H^s_{df}, 
    s>\frac{5}{2}$,
    is $v$-mirror symmetric if and only if
    \begin{equation}
    \omega=-\omega^{M_v},
    \end{equation}
    where 
    \begin{equation}
    M_v=I_3- 2v\otimes v.
    \end{equation}
\end{corollary}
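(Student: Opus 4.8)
The plan is to recognize this corollary as the special case $Q = M_v$ of \Cref{VortGeneralSymThm}, once we verify that $M_v$ is an orthogonal matrix of determinant $-1$. The one genuine subtlety is that \Cref{VortGeneralSymThm} is phrased for $Q \in SO(3)$, whereas a reflection is orientation-reversing; so before invoking it I would check that the ingredients going into that theorem in fact require only that $Q$ be orthogonal, not special orthogonal.

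First I would dispatch the elementary linear algebra. Since $v$ is a unit vector, $M_v = I_3 - 2v\otimes v$ is symmetric and satisfies $M_v^{tr} M_v = M_v^2 = I_3 - 4 v\otimes v + 4 (v\cdot v)\, v\otimes v = I_3$, so $M_v$ is orthogonal. Diagonalizing, one has $M_v v = -v$ while $M_v w = w$ for every $w \perp v$, so the eigenvalues are $-1$ (simple) and $+1$ (multiplicity two), whence $\det(M_v) = -1$.

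Next I would observe that although \Cref{SO3prop}, \Cref{VortThmSO}, \Cref{LaplaceLemma}, and therefore \Cref{VortGeneralSymThm}, are all stated for $SO(3)$, their proofs use only that the columns of $Q$ form an orthonormal basis and that $|x - Qz| = |Q^{tr}x - z|$ — that is, orthogonality of $Q$ together with the bookkeeping factor $\det(Q) = \pm 1$. In particular \Cref{VortThmSO} reads $\nabla\times u^Q = \det(Q)\,\omega^Q$ for every orthogonal $Q$, and the chain of identities proving \Cref{VortGeneralSymThm} goes through verbatim for orientation-reversing $Q$, since the reverse direction there uses only $\det(Q)^2 = 1$.

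With this in hand, applying the (thus extended) \Cref{VortGeneralSymThm} to $Q = M_v$ and substituting $\det(M_v) = -1$ shows that $u$ is $M_v$-symmetric, i.e. $v$-mirror symmetric, if and only if $\omega = \det(M_v)\,\omega^{M_v} = -\omega^{M_v}$, which is exactly the claim. The only real obstacle is this bookkeeping point — confirming that the cited theorem, nominally about $SO(3)$, legitimately applies to a determinant $-1$ matrix — and it is resolved simply by noting that orthogonality, rather than special orthogonality, is all that the underlying proofs actually use.
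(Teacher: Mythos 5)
Your proof is correct and takes essentially the same route as the paper, which simply notes $\det(M_v)=-1$ and invokes \Cref{VortGeneralSymThm}. Your extra care about whether that theorem applies to orientation-reversing matrices is reasonable but already moot in this paper, since its (nonstandard) definition of $SO(3)$ requires only $Q^{tr}Q=I_3$ with no determinant condition, so $M_v$ is covered directly.
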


\begin{proof}
    Observe that $\det(M_v)=-1$, and the result follows immediately from \Cref{VortGeneralSymThm}.
\end{proof}

Permutation symmetry also gives us certain isometries for the Sobolev and Lebesgue norms.

\begin{proposition} \label{VortNormProp}
    For all permutation symmetric, divergence free vector fields $u\in H^{s+1}_{df}, s>\frac{3}{2}$.
    We have the following identities for the Sobolev and $L^p$ norms:
    \begin{equation}
    \|\omega_1\|_{H^s\cap\dot{H}^{-1}}^2
    =
    \|\omega_2\|_{H^s\cap\dot{H}^{-1}}^2
    =
    \|\omega_3\|_{H^s\cap\dot{H}^{-1}}^2
    =
    \frac{1}{3}\|u\|_{H^{s+1}}^2.
    \end{equation}
    For all $2\leq p\leq +\infty$,
    \begin{equation}
    \|\omega_1\|_{L^p}
    =
    \|\omega_2\|_{L^p}
    =
    \|\omega_3\|_{L^p},
    \end{equation}
    and so
    \begin{equation}
    \|\omega\|_{L^p}
    \leq 
    3 \|\omega_1\|_{L^p}.
    \end{equation}
    Note that this implies that if $u,\Tilde{u}\in H^{s+1}_{df}$ are permutation symmetric and 
    $\omega_1=\Tilde{\omega}_1,$
    then $u=\Tilde{u}$.
\end{proposition}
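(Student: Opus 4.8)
The plan is to exploit two facts: that permutation symmetry forces $\omega_2$ and $\omega_3$ to be coordinate-permuted, sign-reversed copies of $\omega_1$, so that all three components share the same norms; and that for a divergence-free field the vorticity and velocity are linked by a pointwise Fourier identity that converts the sum of the three component norms into $\|u\|_{H^{s+1}}^2$.

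First I would invoke \Cref{VortPermSymThm}. The relation $\omega = -\omega^{P_{12}}$, written out componentwise, gives $\omega_2(x) = -\omega_1(P_{12}x) = -\omega_1(x_2,x_1,x_3)$, and similarly $\omega = -\omega^{P_{13}}$ gives $\omega_3(x) = -\omega_1(x_3,x_2,x_1)$. Because a coordinate permutation is a measure-preserving change of variables with unit Jacobian, and negation does not affect the modulus, the substitution $y = P_{12}x$ immediately yields $\|\omega_2\|_{L^p} = \|\omega_1\|_{L^p}$ for every $2\le p\le\infty$ (the $p=\infty$ case being the essential supremum, which is likewise invariant), and the same for $\omega_3$. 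The bound $\|\omega\|_{L^p}\le 3\|\omega_1\|_{L^p}$ then follows from the pointwise inequality $|\omega|\le|\omega_1|+|\omega_2|+|\omega_3|$ together with the triangle inequality in $L^p$.

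For the Sobolev identities I would pass to Fourier space. By \Cref{FourierProp}, the physical-space relations translate to $\widehat{\omega_2}(\xi) = -\widehat{\omega_1}(P_{12}\xi)$ and $\widehat{\omega_3}(\xi) = -\widehat{\omega_1}(P_{13}\xi)$. Since the weight $(1+4\pi^2|\xi|^2)^{s+1}/(4\pi^2|\xi|^2)$ in the $H^s\cap\dot H^{-1}$ norm depends on $\xi$ only through $|\xi|$, which is preserved by $P_{12}$, the substitution $\eta = P_{12}\xi$ shows $\|\omega_2\|_{H^s\cap\dot H^{-1}} = \|\omega_1\|_{H^s\cap\dot H^{-1}}$, and likewise for $\omega_3$. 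It then remains to identify the common value, and here I would use the divergence-free condition: since $\hat\omega(\xi) = 2\pi i\,\xi\times\hat u(\xi)$ with $\xi\cdot\hat u(\xi)=0$, the Lagrange identity collapses to $|\hat\omega(\xi)|^2 = 4\pi^2|\xi|^2|\hat u(\xi)|^2$. Summing the three component norms and writing $|\widehat{\omega_1}|^2+|\widehat{\omega_2}|^2+|\widehat{\omega_3}|^2 = |\hat\omega|^2$, the factor $4\pi^2|\xi|^2$ cancels the denominator of the weight, leaving exactly $\int(1+4\pi^2|\xi|^2)^{s+1}|\hat u|^2\,\diff\xi = \|u\|_{H^{s+1}}^2$. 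Combining this with the equality of the three components gives each norm equal to $\tfrac13\|u\|_{H^{s+1}}^2$.

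Finally, the uniqueness assertion follows by linearity: if $u$ and $\tilde u$ are permutation-symmetric, divergence-free fields in $H^{s+1}_{df}$ with $\omega_1 = \tilde\omega_1$, then $v = u - \tilde u$ is again permutation-symmetric and divergence-free, and its first vorticity component vanishes, so the identity just proved gives $\tfrac13\|v\|_{H^{s+1}}^2 = \|0\|_{H^s\cap\dot H^{-1}}^2 = 0$, whence $u = \tilde u$. I expect the main obstacle to be the divergence-free Fourier identity $|\hat\omega| = 2\pi|\xi||\hat u|$; everything else is symmetry and change-of-variables bookkeeping, but it is the orthogonality of $\xi$ and $\hat u(\xi)$ that makes the weighted $\dot H^{-1}$-type norm collapse cleanly onto $\|u\|_{H^{s+1}}$.
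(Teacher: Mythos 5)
Your proof is correct and follows essentially the same route as the paper: Fourier-side permutation identities for the component norms, the divergence-free identity $|\hat\omega(\xi)|=2\pi|\xi||\hat u(\xi)|$ for the velocity--vorticity isometry, and physical-space change of variables for the $L^p$ claims. If anything, you are slightly more complete, since you justify the Lagrange-identity step and spell out the linearity argument for the uniqueness assertion, which the paper's proof leaves implicit.
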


\begin{proof}
    Because $u$ is permutation symmetric, we can see that
    \begin{align}
    \hat{\omega}_2(\xi)
    &=
    -\hat{\omega}_1(P_{12}\xi) \\
    \hat{\omega}_3(\xi)
    &=
    -\hat{\omega}_1(P_{13}\xi).
    \end{align}
    This immediately implies that
    \begin{equation}
    \|\omega_1\|_{H^s\cap\dot{H}^{-1}}^2
    =
    \|\omega_2\|_{H^s\cap\dot{H}^{-1}}^2
    =
    \|\omega_3\|_{H^s\cap\dot{H}^{-1}}^2
    =
    \frac{1}{3}
    \|\omega\|_{H^s\cap\dot{H}^{-1}}^2.
    \end{equation}
    It remains to show that
    \begin{equation}
    \|u\|_{H^{s+1}}^2
    =
    \|\omega\|_{H^s\cap\dot{H}^{-1}}^2.
    \end{equation}
    Using the fact that 
    \begin{equation}
    |\hat{\omega}(\xi)|=2\pi|\xi| |\hat{u}(\xi)|,
    \end{equation}
    we find that
    \begin{align}
    \int_{\mathbb{R}^3}
    \frac{\left(1+4\pi^2|\xi|^2\right)^{s+1}}{4\pi^2|\xi|^2}
    \left|\hat{\omega}(\xi)\right|^2
    \diff\xi
    =
    \int_{\mathbb{R}^3}
    \left(1+4\pi^2|\xi|^2\right)^{s+1}
    \left|\hat{u}(\xi)\right|^2,
    \end{align}
    and the isometry between the velocity and the vorticity holds.

    Now we move to the Lebesgue space bounds.
    Observing that
    \begin{align}
    \omega_2(x)
    &=
    -\omega_1(P_{12}x) \\
    \omega_3(x)
    &=
    -\omega_1(P_{13}x),
    \end{align}
    we can see immediately that
    \begin{equation}
    \|\omega_1\|_{L^p}
    =
    \|\omega_2\|_{L^p}
    =
    \|\omega_3\|_{L^p}.
    \end{equation}
    Applying the triangle inequality we can see that
    \begin{align}
    \|\omega\|_{L^p}
    &\leq 
    \|\omega_1\|_{L^p}
    +\|\omega_2\|_{L^p}
    +\|\omega_3\|_{L^p} \\
    &=
    3\|\omega_1\|_{L^p},
    \end{align}
    and this completes the proof.
\end{proof}

\begin{remark}
    We will conclude this section by observing that while the permutation group has six elements, it can be generated by two permutations. Therefore, it is sufficient to check that a vector field is symmetric with respect to $P_{12}$ and $P_{13}$ to conclude that it is permutation symmetric. This is also true for the vorticity, with the negative sign imposed by parity. An analogous result also holds for $P_{12}$ and $P_f$
\end{remark}

\begin{theorem} \label{PermuteGenerateThm}
    A vector field $u\in H^s\left(\mathbb{R}^3;\mathbb{R}^3\right), s>\frac{3}{2}$
    is permutation symmetric if and only if
    \begin{align}
    u^{P_{12}}&=u \\
   u^{P_{13}}&= u.
    \end{align}
\end{theorem}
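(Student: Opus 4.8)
The plan is to recognize this as a statement about group generation: the six-element group $\mathcal{P}_3$ is generated by the two transpositions $P_{12}$ and $P_{13}$, so symmetry under these two generators must propagate to symmetry under the entire group. The forward implication is immediate from the definition, since a permutation symmetric field satisfies $u^P=u$ for every $P\in\mathcal{P}_3$, in particular for $P=P_{12}$ and $P=P_{13}$. All of the work is in the converse.

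For the converse, the first step is to observe that the collection of orthogonal symmetries of a fixed $u$ is closed under composition. Indeed, if $u^Q=u$ and $u^{Q'}=u$, then \Cref{OrthoComposeProp} gives
\begin{equation}
u^{Q'Q}=\left(u^Q\right)^{Q'}=u^{Q'}=u .
\end{equation}
Although \Cref{OrthoComposeProp} is stated for continuous fields, the composition identity is purely algebraic and persists for $u\in H^s$ as an almost-everywhere identity, since $u\mapsto u^Q$ maps $H^s$ to itself; this is the same caveat already used in the earlier remarks. Consequently, once we know that $P_{12}$ and $P_{13}$ are symmetries of $u$, every finite product of these matrices is also a symmetry of $u$.

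The second step is to verify by direct matrix multiplication that every element of $\mathcal{P}_3$ is such a product. I would record the identities
\begin{align}
I_3 &= P_{12}^2, & P_b &= P_{12} P_{13}, \\
P_f &= P_{13} P_{12}, & P_{23} &= P_{13} P_{12} P_{13},
\end{align}
together with the trivial expressions for $P_{12}$ and $P_{13}$ themselves. These six expressions exhaust $\mathcal{P}_3$, so the closure property from the previous step forces $u^P=u$ for every $P\in\mathcal{P}_3$, which is precisely permutation symmetry.

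I do not expect a genuine obstacle here beyond bookkeeping: the entire content reduces to the short list of matrix products displayed above, a finite and routine computation, combined with the closure argument built on \Cref{OrthoComposeProp}. The only point warranting a word of care is the transition from the continuous hypothesis of \Cref{OrthoComposeProp} to the $H^s$ setting of the present statement, but this is handled exactly as in the earlier remarks, with all the relevant identities interpreted almost everywhere.
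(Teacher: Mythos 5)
Your proposal is correct and follows essentially the same route as the paper: the forward direction is definitional, and the converse uses \Cref{OrthoComposeProp} to propagate symmetry from the generators $P_{12}$, $P_{13}$ to all of $\mathcal{P}_3$ via explicit factorizations (the paper writes $P_b=P_{12}P_{13}$, $P_f=P_b^2$, $P_{23}=P_{13}P_b$, while you write the equivalent $P_f=P_{13}P_{12}$ and $P_{23}=P_{13}P_{12}P_{13}$). Your extra remark on interpreting the identities almost everywhere in $H^s$ is a reasonable, minor refinement that the paper leaves implicit.
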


\begin{proof}
    It is obvious by definition that if $u$ is permutation symmetric, then the identity holds. What is necessary is to show that symmetry with respect to $P_{12}$ and $P_{13}$ is sufficient to guarantee permutation symmetry. This is true because these two permutations generate $\mathcal{P}_3$.
    First observe that
    \begin{equation}
    P_b=P_{12}P_{13}.
    \end{equation}
    Applying \Cref{OrthoComposeProp}, we can see that
    \begin{equation}
    u^{P_b}
    =
    u^{P_{12}P_{13}} 
    =
    \left(u^{P_{13}}\right)^{P_{12}}
    =
    u^{P_{12}}
    =
    u.
    \end{equation}
    Next observe that
    \begin{equation}
    P_f=P_b^2,
    \end{equation}
    and so
    \begin{equation}
    u^{P_f}
    =
    u^{P_b^2}
    =
    \left(u^{P_b}\right)^{P_b}
    =
    u^{P_b}
    =
    u.
    \end{equation}
    Finally observe that 
    \begin{equation}
    P_{23}=P_{13}P_b,
    \end{equation}
    and so
    \begin{equation}
    u^{P_{23}}
    =
    u^{P_{13}P_b}
    =
    \left(u^{P_{b}}\right)^{P_{13}}
    =
    u^{P_{13}}
    =
    u.
    \end{equation}
    Therefore, $u$ is permutation symmetric.
\end{proof}

\begin{corollary} \label{PermuteGenerateCor}
    A vector field $u\in H^s_{df}, s>\frac{5}{2}$ is permutation symmetric if and only if
    \begin{align}
    \omega^{P_{12}} &= -\omega \\
    \omega^{P_{13}} &= -\omega.
    \end{align}
\end{corollary}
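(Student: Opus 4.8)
The plan is to deduce this directly from the two results already established, namely the velocity-level generation theorem (\Cref{PermuteGenerateThm}) and the vorticity characterization of a single symmetry (\Cref{VortGeneralSymThm}). Since $u\in H^s_{df}$ with $s>\frac{5}{2}$, both of these are available, and the corollary should follow by pure combination without any new estimate.

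First I would invoke \Cref{PermuteGenerateThm} to replace the six-fold permutation symmetry of $u$ with the two conditions $u^{P_{12}}=u$ and $u^{P_{13}}=u$, which suffice precisely because $P_{12}$ and $P_{13}$ generate $\mathcal{P}_3$. This reduces the task to translating each of these two velocity symmetries into an equivalent statement about the vorticity $\omega$.

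Next I would apply \Cref{VortGeneralSymThm} separately with $Q=P_{12}$ and with $Q=P_{13}$. Since $\det(P_{12})=\det(P_{13})=-1$, that theorem gives $u^{P_{12}}=u$ if and only if $\omega=-\omega^{P_{12}}$, and $u^{P_{13}}=u$ if and only if $\omega=-\omega^{P_{13}}$. Rearranging each equivalence by multiplying through by $-1$ yields $\omega^{P_{12}}=-\omega$ and $\omega^{P_{13}}=-\omega$, which are exactly the two stated conditions. Combining this with the previous step closes the ``if and only if.''

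There is essentially no analytic obstacle here; the content is entirely group-theoretic and was already carried out in the two cited results. The only point requiring a small amount of care is the bookkeeping of the sign: the determinant $-1$ is what forces the minus sign on $\omega^{P_{12}}$ and $\omega^{P_{13}}$, in agreement with the parity observation that odd permutations flip the orientation of the vorticity (\Cref{SwapPermuteCor}). I would also be careful to keep the regularity threshold $s>\frac{5}{2}$ explicit, since that is exactly where \Cref{VortGeneralSymThm} needs the velocity smooth enough for the Biot--Savart inversion used in its proof to be valid.
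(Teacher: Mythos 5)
Your proposal is correct and follows essentially the same route as the paper: both arguments reduce permutation symmetry to the two generator symmetries $u^{P_{12}}=u$, $u^{P_{13}}=u$ via \Cref{PermuteGenerateThm}, and then convert each to the corresponding vorticity condition using the $\det(Q)=-1$ case of \Cref{VortGeneralSymThm} (the paper cites this step through \Cref{VortPermSymThm}, which is itself just \Cref{VortGeneralSymThm} applied to each permutation). The sign bookkeeping and the regularity threshold you flag are exactly the points the paper relies on as well.
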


\begin{proof}
    The forward direction was already proven in \Cref{VortPermSymThm}, so we only need to show the reverse direction. We will do this by proving that $u$ is symmetric with respect to $P_{12}$ ad $P_{13}$ and applying \Cref{PermuteGenerateThm}.
    We have already shown in the proof of \Cref{VortPermSymThm} that if 
    \begin{equation}
    \omega^{P_{12}} = -\omega,
    \end{equation}
    then
    \begin{equation}
    u^{P_{12}}=u,
    \end{equation}
    and that if 
    \begin{equation}
    \omega^{P_{13}} = -\omega,
    \end{equation}
    then
    \begin{equation}
    u^{P_{13}}=u.
    \end{equation}
    Combined with \Cref{PermuteGenerateThm},
    this completes the proof.
\end{proof}

\begin{corollary} \label{PermuteGenerateCor2}
    A vector field $u\in H^s_{df}, s>\frac{3}{2}$ is permutation symmetric if and only if
    \begin{align}
    u^{P_{12}} &= u \\
    u^{P_f} &= u.
    \end{align}
\end{corollary}

\begin{proof}
    The proof comes down to the fact that $P_{12}$ and $P_f$ also generate $\mathcal{P}_3$.
    Suppose that 
    \begin{align}
    u^{P_{12}} &= u \\
    u^{P_f} &= u.
    \end{align}
    Then we can see that 
    \begin{equation}
    P_{13}=P_f P_{12},
    \end{equation}
    and so
    \begin{equation}
    u^{P_{13}}
    =u^{P_f P_{12}}
    \left(u^{P_{12}}\right)^{P_f}
    =u^{P_f}
    =u.
    \end{equation}
    Applying \Cref{PermuteGenerateThm}, this completes the proof.
\end{proof}

\begin{corollary} \label{PermuteGenerateCor3}
    A vector field $u\in H^s_{df}, s>\frac{5}{2}$ is permutation symmetric if and only if
    \begin{align}
    \omega^{P_{12}} &= -\omega \\
    \omega^{P_f} &= \omega.
    \end{align}
\end{corollary}

\begin{proof}
    Suppose that
    \begin{align}
    \omega^{P_{12}} &= -\omega \\
    \omega^{P_f} &= \omega.
    \end{align}
    We have shown in the proof of \Cref{VortPermSymThm} that this implies
    \begin{align}
    u^{P_{12}} &= u \\
    u^{P_f} &= u.
    \end{align}
    Applying \Cref{PermuteGenerateCor2},
    this completes the proof.
\end{proof}

\subsection{The permutation symmetric Biot-Savart law}

When the velocity is permutation symmetric, a single vorticity component determines the entire vorticity by taking permutations, and consequently determines the entire velocity.
In this subsection, we will derive a Biot-Savart law in terms of one vorticity component $\omega_1$.

\begin{theorem} \label{BiotSavart}
    Suppose $u\in H^s_{df}, s>\frac{5}{2}$ is permutation symmetric. Then $u$ can be determined entirely by one component of the vorticity, $\omega_1$, and has the Biot-Savart law
    \begin{equation}
    u(x)=\int_{\mathbb{R}^3}
    G(x,y) \omega_1(y) \diff y,
    \end{equation}
    where 
    \begin{equation}
    G(x,y)=
    \frac{1}{|x-y|^3}\left(\begin{array}{c}
         0  \\ -x_3+y_3 \\ x_2-y_2 
    \end{array}\right)
    +
    \frac{1}{|x-P_{12}(y)|^3}
    \left(\begin{array}{c}
         -x_3+y_3  \\ 0 \\ x_1-y_2 
    \end{array}\right)
    +
    \frac{1}{|x-P_{13}(y)|^3}
    \left(\begin{array}{c}
         x_2-y_2  \\ -x_1+y_3 \\ 0 
    \end{array}\right).
    \end{equation}
\end{theorem}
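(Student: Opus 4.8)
The plan is to obtain the representation from the classical (full) Biot--Savart law and then use permutation symmetry to collapse the three vorticity components into $\omega_1$ alone. Since $u=\nabla\times(-\Delta)^{-1}\omega$ and the fundamental solution of $-\Delta$ on $\mathbb{R}^3$ is $\tfrac{1}{4\pi|x|}$ (as already used in \Cref{LaplaceLemma}), differentiating under the integral sign gives the standard vector representation
\begin{equation}
u(x)=\frac{1}{4\pi}\int_{\mathbb{R}^3}\frac{\omega(y)\times(x-y)}{|x-y|^3}\diff y.
\end{equation}
The hypothesis $s>\tfrac{5}{2}$ yields $\omega\in H^{s-1}$ with $s-1>\tfrac{3}{2}$, which provides more than enough regularity and decay to justify both the differentiation under the integral and the changes of variables performed below.

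Next I would decompose $\omega=\omega_1 e_1+\omega_2 e_2+\omega_3 e_3$ and invoke permutation symmetry. By \Cref{PermuteGenerateCor} we have $\omega^{P_{12}}=-\omega$ and $\omega^{P_{13}}=-\omega$; reading off the components and using $P_{12}^{tr}=P_{12}$, $P_{13}^{tr}=P_{13}$ gives $\omega_2(y)=-\omega_1(P_{12}y)$ and $\omega_3(y)=-\omega_1(P_{13}y)$. Substituting these relations into the representation above splits $u$ into three integrals, one for each basis direction, governed by the cross products $e_1\times(x-y)=(0,-x_3+y_3,x_2-y_2)$, $e_2\times(x-y)=(x_3-y_3,0,-x_1+y_1)$, and $e_3\times(x-y)=(-x_2+y_2,x_1-y_1,0)$.

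The first integral, coming from $\omega_1 e_1$, already matches the first term of $G$ verbatim. For the second and third integrals I would apply the substitutions $y\mapsto P_{12}y$ and $y\mapsto P_{13}y$ respectively; each is an involution with unit Jacobian, so $\diff y$ is preserved, while $|x-y|$ becomes $|x-P_{12}y|$ (resp. $|x-P_{13}y|$) and the weights $-\omega_1(P_{12}y)$, $-\omega_1(P_{13}y)$ become $-\omega_1(y)$. Relabelling the entries of the cross-product vectors according to the substitution, the two minus signs cancel, and the integrands become precisely the second and third terms of $G$. Collecting the three contributions (carrying the overall constant $\tfrac{1}{4\pi}$ inherited from the fundamental solution) produces the claimed kernel.

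The step demanding the most care is the coordinate bookkeeping in the last paragraph: one must track exactly how the entries of $e_2\times(x-y)$ and $e_3\times(x-y)$ are relabelled by the permutation substitution---so that, for instance, the $y_1$ appearing in the $e_2$--term is replaced by $y_2$, while the $y_1$ in the $e_3$--term is replaced by $y_3$---and confirm that every sign matches. This is elementary but is where errors are most likely, so I would verify each of the three vector components of $G$ individually rather than in aggregate.
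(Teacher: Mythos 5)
Your proposal is correct and follows essentially the same route as the paper: start from the classical Biot--Savart law $u=\nabla\times(-\Delta)^{-1}\omega$ written as a convolution, expand the cross product componentwise, replace $\omega_2(y)=-\omega_1(P_{12}y)$ and $\omega_3(y)=-\omega_1(P_{13}y)$ using the permutation symmetry of the vorticity, and change variables by the measure-preserving involutions $P_{12}$, $P_{13}$; the sign and index bookkeeping you flag as the delicate step checks out against the stated kernel $G$. The only cosmetic discrepancy is the overall factor of $\tfrac{1}{4\pi}$, which you correctly carry but which is absent from the statement's formula for $G$ (and from the paper's own final expression), so this is an inconsistency in the source rather than in your argument.
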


\begin{proof}
    Recalling that the classic Biot-Savart law is given by $\omega=\nabla\times(-\Delta)^{-1}\omega$, we compute that
    \begin{align}
    u(x)
    &=
    \nabla \times\int_{\mathbb{R}^3}
     \left(\frac{1}{4\pi|x-y|}\right)
     \omega(y)\diff y \\
     &=
     -\int_{\mathbb{R}^3}
     \frac{x-y}{4\pi |x-y|^3}\times \omega(y) 
     \diff y \\
     &=
     \int_{\mathbb{R}^3}
     \frac{1}{4\pi |x-y|^3}\left(
     \left(\begin{array}{c}
       0  \\ -x_3+y_3 \\ x_2-y_2 
     \end{array}\right) \omega_1(y)
     +\left(\begin{array}{c}
        x_3-y_3 \\ 0 \\ -x_1+y_1
     \end{array}\right) \omega_2(y)
     +\left(\begin{array}{c}
        -x_2+y_2 \\ x_1-y_1 \\ 0
     \end{array}\right) \omega_3(y)
     \right)\diff y.
    \end{align}
    Applying \Cref{VortPermSymThm} we can see that
    \begin{equation}
    \omega_2(y)=-\omega_1(P_{12}(y))
    \end{equation}
    and letting $z=P_{12}(y)$, we find that
    \begin{align}
    \frac{1}{4\pi}\int_{\mathbb{R}^3}
     \frac{1}{|x-y|^3}
     +\left(\begin{array}{c}
        x_3-y_3 \\ 0 \\ -x_1+y_1
     \end{array}\right) \omega_2(y) \diff z
     &=
     -\frac{1}{4\pi}\int_{\mathbb{R}^3}
     \frac{1}{|x-y|^3}
     +\left(\begin{array}{c}
        x_3-y_3 \\ 0 \\ -x_1+y_1
     \end{array}\right) 
     \omega_1(P_{12}(y)) \diff y \\
     &=
     \frac{1}{4\pi}\int_{\mathbb{R}^3}
     \frac{1}{|x-P_{12}(z)|^3}
     +\left(\begin{array}{c}
        -x_3+z_3 \\ 0 \\ x_1-z_2
     \end{array}\right) 
     \omega_1(z) \diff z.
    \end{align}
    We likewise observe that
    \begin{equation}
    \omega_3(y)=-\omega_1(P_{13}(y)),
    \end{equation}
    and letting $z=P_{13}(y)$, we can compute that
    \begin{align}
    \frac{1}{4\pi}\int_{\mathbb{R}^3}
     \frac{1}{|x-y|^3}
     \left(\begin{array}{c}
        -x_2+y_2 \\ x_1-y_1 \\ 0
     \end{array}\right) \omega_3(y)
     \diff y 
     &=
     -\frac{1}{4\pi}\int_{\mathbb{R}^3}
     \frac{1}{|x-y|^3}
     \left(\begin{array}{c}
        -x_2+y_2 \\ x_1-y_1 \\ 0
     \end{array}\right) \omega_1(P_{13}(y))
     \diff y \\
     &=
     \frac{1}{4\pi}\int_{\mathbb{R}^3}
     \frac{1}{|x-P_{13}(z)|^3}
     \left(\begin{array}{c}
        x_2-z_2 \\ -x_1+z_3 \\ 0
     \end{array}\right) \omega_1(z)
     \diff z.
    \end{align}
    Changing the variable of integration back from $z$ to $y$ for consistency of these latter two integrals with the first integral, this completes the proof.
\end{proof}

\subsection{The evolution equation for one vorticity component}

We have seen in the previous subsection that, for permutation-symmetric vector fields, one component of the vorticity completes determines both the velocity and the vorticity.
We will now show that this implies the vorticity evolution equation can be reduced to an evolution equation for a single component: 
\begin{equation} \label{OneCompCorEqn}
    \partial_t \omega_1
    +(u\cdot\nabla)\omega_1
    -(\omega\cdot\nabla) u_1
    =0.
\end{equation}
This is an advantage, because a scalar vorticity can satisfy sign conditions, which the vorticity vector cannot. This means a scalar vorticity is more amenable to blowup arguments involving a quantity bounded below. In this subsection, we will prove \Cref{OneVortCompThmIntro}, which will be broken up into multiple pieces. We begin by stating some classical results for the Euler equation, the local existence theory, the equivalence of the Euler equation and the Euler vorticity equation, and the the invariance of the Euler equation with respect to $O(3)$. See \cites{KatoPonce,BKM,MajdaBertozzi} for details.

\begin{theorem} \label{EulerExistThm}
    For all $u^0\in H^s_{df}, s>\frac{5}{2}$, there exists a unique strong solution of the Euler equation
    $u\in C\left([0,T_{max});H^s_{df}\right) \cap
    C^1\left([0,T_{max});H^{s-1}_{df}\right)$,
    where 
    $T_{max}\geq \frac{C_s}{\left\|u^0\right\|_{H^s}}$.
    This solution satisfies the energy equality: for all $0<t<T_{max}$,
    \begin{equation}
    \|u(\cdot,t)\|_{L^2}^2=\left\|u^0\right\|_{L^2}^2.
    \end{equation}
    Furthermore, if $T_{max}<+\infty$, then
    \begin{equation}
    \int_0^{T_{max}} 
    \|\omega(\cdot,t)\|_{L^\infty}\diff t
    =+\infty.
    \end{equation}
\end{theorem}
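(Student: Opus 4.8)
Since Theorem~\ref{EulerExistThm} is the classical local wellposedness package for the Euler equation, the plan is either to cite \cites{KatoPonce,BKM,MajdaBertozzi} directly or to reproduce the standard energy-method argument, which splits into existence/uniqueness, the energy equality, and the Beale-Kato-Majda continuation criterion. First I would establish existence by regularizing the projected equation---by mollifying the nonlinearity, by Fourier truncation, or by adding artificial viscosity $\epsilon\Delta u$---so that each regularized problem is well-posed in a suitable space, and then derive a priori bounds uniform in the regularization parameter. The central estimate is the $H^s$ energy inequality: applying $\Lambda^s=(I-\Delta)^{s/2}$, pairing with $\Lambda^s u$, and using that $\mathbb{P}_{df}$ is an $L^2$-orthogonal projection commuting with derivatives, the advective term contributes only through the commutator $[\Lambda^s,u\cdot\nabla]u$. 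The Kato-Ponce commutator estimate bounds its $L^2$ norm by $\|\nabla u\|_{L^\infty}\|u\|_{H^s}$, and the Sobolev embedding $H^{s-1}\hookrightarrow L^\infty$ (valid since $s-1>\tfrac{3}{2}$) gives $\|\nabla u\|_{L^\infty}\lesssim\|u\|_{H^s}$. This yields $\tfrac{\diff}{\diff t}\|u\|_{H^s}^2\lesssim\|u\|_{H^s}^3$, and an ODE comparison produces both the uniform bound and the lower bound $T_{max}\geq C_s/\|u^0\|_{H^s}$. Passing to the limit via compactness in a lower norm (an Aubin-Lions type argument) yields $u\in C([0,T_{max});H^s_{df})$, and the time regularity $C^1([0,T_{max});H^{s-1}_{df})$ is read off from the equation, since the nonlinearity maps $H^s$ into $H^{s-1}$.

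For uniqueness and continuous dependence I would take two solutions $u,\tilde u$, subtract, and run an $L^2$ (or $H^{s-1}$) energy estimate on $w=u-\tilde u$; the difference of the nonlinear terms is controlled by $\|\nabla u\|_{L^\infty}$ and $\|\nabla\tilde u\|_{L^\infty}$, both finite, so Gr\"onwall forces $w\equiv 0$ when $w(0)=0$ and gives Lipschitz dependence otherwise. The energy equality is the easiest piece: pairing the equation with $u$ in $L^2$, self-adjointness of $\mathbb{P}_{df}$ together with $\mathbb{P}_{df}u=u$ removes the projection, and integration by parts using $\nabla\cdot u=0$ kills the nonlinear term, $\langle(u\cdot\nabla)u,u\rangle=\int u\cdot\nabla\tfrac{|u|^2}{2}\diff x=0$, so $\|u(\cdot,t)\|_{L^2}$ is constant; this needs only that $u$ be regular enough to justify the integration by parts, which holds for $s>\tfrac{5}{2}$.

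For the Beale-Kato-Majda criterion I would use the continuation principle from the local theory, namely that $T_{max}<\infty$ forces $\limsup_{t\to T_{max}}\|u(\cdot,t)\|_{H^s}=\infty$, so it suffices to show that $\int_0^T\|\omega\|_{L^\infty}\diff t<\infty$ keeps $\|u\|_{H^s}$ bounded. Here I would replace the crude bound $\|\nabla u\|_{L^\infty}\lesssim\|u\|_{H^s}$ in the $H^s$ estimate by the sharp logarithmic Sobolev inequality of Beale, Kato, and Majda, $\|\nabla u\|_{L^\infty}\lesssim 1+\|\omega\|_{L^\infty}\bigl(1+\log^+\|u\|_{H^s}\bigr)+\|u\|_{L^2}$, which follows from the Biot-Savart law $u=\nabla\times(-\Delta)^{-1}\omega$ and the borderline behaviour of the singular integral $\nabla\nabla(-\Delta)^{-1}$. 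Feeding this into $\tfrac{\diff}{\diff t}\|u\|_{H^s}^2\lesssim\|\nabla u\|_{L^\infty}\|u\|_{H^s}^2$ and setting $y(t)=\log(e+\|u(\cdot,t)\|_{H^s}^2)$ gives a linear differential inequality $y'(t)\lesssim(1+\|\omega(\cdot,t)\|_{L^\infty})y(t)$, and Gr\"onwall yields the double-exponential bound $\|u(\cdot,t)\|_{H^s}\lesssim\exp\exp\bigl(C\int_0^t\|\omega\|_{L^\infty}\bigr)$, which is finite on $[0,T]$ precisely when the vorticity integral is finite.

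The main obstacle is this last step, and specifically the logarithmic Sobolev inequality: the endpoint embedding $H^s\not\hookrightarrow W^{1,\infty}$ at $s=\tfrac{5}{2}$ fails, so $\|\nabla u\|_{L^\infty}$ cannot be bounded by $\|\omega\|_{L^\infty}$ alone, and the argument closes only by exploiting the precise logarithmic loss together with the double-exponential Gr\"onwall estimate. Everything else---the commutator estimate, the limiting argument, uniqueness, continuous dependence, and the energy equality---is routine energy-method machinery once the $H^s$ a priori estimate is established.
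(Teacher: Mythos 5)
Your proposal matches the paper exactly: the paper offers no proof of this theorem, instead citing it as classical from \cites{KatoPonce,BKM,MajdaBertozzi}, and your reconstruction (regularization plus the Kato--Ponce commutator estimate for the $H^s$ a priori bound, an $L^2$ energy argument for uniqueness and the energy equality, and the logarithmic Sobolev inequality with a double-exponential Gr\"onwall bound for the Beale--Kato--Majda criterion) is precisely the standard argument contained in those references. No discrepancies to report.
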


\begin{theorem} \label{EulerVortEquivThm}
    Suppose
    $u\in C\left([0,T_{max});H^s_{df}\right) \cap
    C^1\left([0,T_{max});H^{s-1}_{df}\right),
    s>\frac{5}{2}$.
    Then $u$ is a solution of the Euler equation if and only if $\omega=\nabla\times u$ is a solution of the Euler vorticity equation
    \begin{equation}
    \partial_t\omega
    +(u\cdot\nabla)\omega
    -(\omega\cdot\nabla)u
    =0.
    \end{equation}
\end{theorem}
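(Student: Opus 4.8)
The plan is to prove both implications by taking the curl of the Euler equation and exploiting the fact that the Helmholtz projection differs from the identity only by a gradient. For the forward direction, I would start from $\partial_t u + \mathbb{P}_{df}((u\cdot\nabla)u) = 0$ and apply $\nabla\times$ to both sides. Since $(I - \mathbb{P}_{df})F$ is a gradient for any admissible $F$, and the curl of a gradient vanishes, we have $\nabla\times\mathbb{P}_{df}((u\cdot\nabla)u) = \nabla\times((u\cdot\nabla)u)$, so the equation reduces to $\partial_t\omega + \nabla\times((u\cdot\nabla)u) = 0$.

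The key computation is then the vector identity $(u\cdot\nabla)u = \tfrac{1}{2}\nabla|u|^2 - u\times\omega$, together with $\nabla\times(u\times\omega) = (\omega\cdot\nabla)u - (u\cdot\nabla)\omega$, where both reductions use $\nabla\cdot u = 0$ and $\nabla\cdot\omega = 0$ (the latter being automatic for a curl). Taking the curl annihilates the gradient term, yielding $\nabla\times((u\cdot\nabla)u) = (u\cdot\nabla)\omega - (\omega\cdot\nabla)u$, and hence the stated vorticity equation $\partial_t\omega + (u\cdot\nabla)\omega - (\omega\cdot\nabla)u = 0$.

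For the reverse direction, I would run this computation backwards: assuming $\omega$ solves the vorticity equation, the same identities show that $\nabla\times\bigl(\partial_t u + (u\cdot\nabla)u\bigr) = 0$. The crucial step is to upgrade this to $\mathbb{P}_{df}\bigl(\partial_t u + (u\cdot\nabla)u\bigr) = 0$. The cleanest route is the Fourier characterization: a curl-free field $F$ satisfies $\xi\times\hat{F}(\xi) = 0$ for a.e.\ $\xi$, so $\hat{F}(\xi)$ is parallel to $\xi$, which is precisely the component that the Helmholtz projection subtracts off; hence $\mathbb{P}_{df}F = 0$. Then, using that $\partial_t u$ is divergence-free (so $\mathbb{P}_{df}(\partial_t u) = \partial_t u$), applying $\mathbb{P}_{df}$ to $\partial_t u + (u\cdot\nabla)u$ gives $\partial_t u + \mathbb{P}_{df}((u\cdot\nabla)u) = 0$, which is the Euler equation.

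The main obstacle is the bookkeeping of regularity: with $s > \tfrac{5}{2}$ we have $\omega \in C\left([0,T_{max});H^{s-1}\right)$ where $H^{s-1}$ is an algebra since $s-1 > \tfrac{3}{2}$, so the quadratic terms $(u\cdot\nabla)\omega$ and $(\omega\cdot\nabla)u$ lie in $H^{s-2}$ and every curl manipulation above is legitimate in the stated spaces. The only genuinely non-formal point is the ``curl-free implies the projection vanishes'' step; I would dispatch it via the Fourier computation rather than by constructing a scalar potential, which avoids any decay or topological subtleties on $\mathbb{R}^3$.
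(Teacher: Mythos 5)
Your proposal is correct, and it is the standard argument. Note that the paper does not actually prove \Cref{EulerVortEquivThm}: it is listed among the classical facts quoted from the literature (the references to Kato--Ponce, Beale--Kato--Majda, and Majda--Bertozzi), so there is no in-paper proof to compare against. Your argument — taking the curl, using $(u\cdot\nabla)u=\tfrac{1}{2}\nabla|u|^2-u\times\omega$ and $\nabla\times(u\times\omega)=(\omega\cdot\nabla)u-(u\cdot\nabla)\omega$ for the forward direction, and the Fourier-side observation that a curl-free $L^2$ field is annihilated by $\mathbb{P}_{df}$ for the reverse — is exactly the textbook proof those references supply, and your regularity bookkeeping ($H^{s-1}$ an algebra for $s>\tfrac{5}{2}$, quadratic terms landing in $H^{s-2}$, the curl commuting with $\partial_t$ as a bounded map $H^{s-1}\to H^{s-2}$) is sound.
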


\begin{proposition} \label{O3InvariantProp}
    Suppose $u\in C\left([0,T_{max});H^s_{df}\right) \cap
    C^1\left([0,T_{max});
    H^{s-1}_{df}\right)$
    is a solution of the Euler equation and $Q\in O(3)$.
    Then $u^Q\in C\left([0,T_{max});H^s_{df}\right) \cap
    C^1\left([0,T_{max});
    H^{s-1}_{df}\right)$
    is also a solution of the Euler equation.
\end{proposition}

The subspace of permutation symmetric velocity fields is preserved by the dynamics of the Euler equation. This result was proven in \cite{MillerRestricted}, but the arguments are entirely classical, following immediately from \Cref{O3InvariantProp} and uniqueness.

\begin{proposition} \label{PermuteEulerProp}
    Suppose $u^0\in H^s_{df}, s>\frac{5}{2}$ is permutation symmetric. Then the solution of the Euler equation with this initial data $u\in C\left([0,T_{max});H^s_{df}\right) \cap
    C^1\left([0,T_{max});H^{s-1}_{df}\right)$
    is also permutation symmetric for all $0<t<T_{max}$.
    Furthermore, suppose $u^0$ is odd, permutation symmetric, and $\sigma$-mirror symmetric. Then $u(\cdot,t)$ is also odd, permutation symmetric, and $\sigma$-mirror symmetric
    for all $0<t<T_{max}$.
\end{proposition}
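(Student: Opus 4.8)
The plan is to use the standard principle that a symmetry shared by the equation and the initial data is propagated in time by uniqueness. Concretely, fix $Q \in O(3)$, suppose the Euler flow is equivariant under the coordinate change $u \mapsto u^Q$, and suppose $u^0$ is $Q$-symmetric, so that $\left(u^0\right)^Q = u^0$. Then $u^Q$ and $u$ are both solutions of the Euler equation on $[0,T_{max})$ in the class of \Cref{EulerExistThm}, and they carry the same initial datum $\left(u^0\right)^Q = u^0$. By the uniqueness asserted in \Cref{EulerExistThm} they agree, $u(\cdot,t)^Q = u(\cdot,t)$ for all $0 \le t < T_{max}$, so $Q$-symmetry persists.

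First I would handle permutation symmetry. Every $P \in \mathcal{P}_3$ lies in $SO(3)$, so \Cref{SO3InvariantProp} supplies exactly the required equivariance: $u^P$ solves the Euler equation in the class of \Cref{EulerExistThm} whenever $u$ does. By \Cref{PermuteGenerateThm} it is enough to treat $P_{12}$ and $P_{13}$. Since $u^0$ is permutation symmetric, $\left(u^0\right)^{P_{12}} = u^0$ and $\left(u^0\right)^{P_{13}} = u^0$, and the uniqueness argument above yields $u(\cdot,t)^{P_{12}} = u(\cdot,t)$ and $u(\cdot,t)^{P_{13}} = u(\cdot,t)$. A second appeal to \Cref{PermuteGenerateThm} then promotes these two symmetries to full permutation symmetry for every $t$.

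For the second statement I would propagate oddness and $\sigma$-mirror symmetry. Oddness is precisely $Q$-symmetry for $Q = -I_3$, since $u^{-I_3}(x) = -u(-x)$, and $\sigma$-mirror symmetry is $Q$-symmetry for $Q = M_{\Tilde{\sigma}}$. Both matrices have determinant $-1$, hence lie in $O(3) \setminus SO(3)$, so \Cref{SO3InvariantProp} as stated does not cover them. The remedy is to note that the equivariance of the Euler equation under $u \mapsto u^Q$ uses only the orthogonality relation $Q^{tr}Q = I_3$: the divergence-free condition (via \Cref{GradProp} and the trace), the transport term $(u \cdot \nabla)u$, and the pressure gradient all transform correctly without any use of $\det Q = 1$, equivalently because $\mathbb{P}_{df}$ commutes with every orthogonal change of coordinates. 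Thus \Cref{SO3InvariantProp} extends verbatim to all of $O(3)$, and the uniqueness argument applies with $Q = -I_3$ and with $Q = M_{\Tilde{\sigma}}$, propagating oddness and $\sigma$-mirror symmetry.

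The only genuine obstacle is this last point. One cannot disentangle oddness from mirror symmetry using $SO(3)$ alone: the product $(-I_3) M_{\Tilde{\sigma}}$ does lie in $SO(3)$, but its preservation propagates only the combined symmetry, not each factor separately, so at least one orientation-reversing element must be accommodated. Hence the phrase ``following immediately from \Cref{SO3InvariantProp}'' should be read as invoking its evident $O(3)$ analogue, whose verification is the routine check that the Helmholtz projection commutes with reflections as well as rotations. Everything else is a direct application of uniqueness.
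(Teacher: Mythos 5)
Your argument is correct and is exactly the one the paper intends: the paper gives no proof of this proposition, stating only that the result "follows immediately from \Cref{SO3InvariantProp} and uniqueness," which is precisely the equivariance-plus-uniqueness argument you carry out (and your use of \Cref{PermuteGenerateThm} to reduce to the two generators is a harmless shortcut). Regarding the one obstacle you flag: note that the paper defines $SO(3)$ by the single condition $Q^{tr}Q=I_3$, and \Cref{SO3prop} explicitly allows $\det Q=\pm 1$, so the paper's ``$SO(3)$'' is really the full orthogonal group $O(3)$ --- it already contains the swap permutations, $M_{\Tilde{\sigma}}$, and $-I_3$ --- hence \Cref{SO3InvariantProp} as stated covers the orientation-reversing symmetries and no separate extension is needed, though your observation that the equivariance uses only $Q^{tr}Q=I_3$ (never $\det Q=1$) is exactly the right justification for why that convention is harmless here.
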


This result will allow us to find solutions of the single-component, permutation-symmetric, Euler vorticity equation as an immediate corollary. We will then use a stability estimate to prove uniqueness. First we must establish a relationship between the constraint space $\dot{H}^s_*$ and functions that are components of the vorticities of permutation symmetric vector fields.

\begin{theorem} \label{ConstraintThm}
    Suppose $u\in H^{s+1}_{df}$ is permutation symmetric and $\omega=\nabla \times u$.
    Then $\omega_1\in H^s_*\cap \dot{H}^{-1}$.
    Suppose $f\in H^s_*\cap \dot{H}^{-1}$.
    Then there exists a unique permutation symmetric vector field $\Tilde{u}\in H^{s+1}_{df}$,
    such that $f=\Tilde{\omega}_1$, where 
    $\Tilde{\omega}=
    \nabla \times \Tilde{u}$.
\end{theorem}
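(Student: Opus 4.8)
The proof splits into the two asserted directions. The forward direction is essentially a repackaging of results already established, while the reverse direction requires building a candidate vorticity from $f$ and checking, by a purely algebraic argument, that it inherits exactly the symmetry and divergence-free structure characterizing permutation-symmetric fields.

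First I would dispatch the forward direction. Given permutation symmetric $u\in H^{s+1}_{df}$, the membership $\omega_1\in H^s\cap\dot{H}^{-1}$ is immediate from \Cref{VortNormProp}, which gives $\|\omega_1\|_{H^s\cap\dot{H}^{-1}}^2=\frac{1}{3}\|u\|_{H^{s+1}}^2<+\infty$. The physical-space constraint $\omega_1(x)=-\omega_1(P_{23}x)$ is just the first component of the relation $\omega=-\omega^{P_{23}}$ from \Cref{VortPermSymThm}. For the Fourier-space constraint I would start from $\nabla\cdot\omega=0$ (the divergence of a curl vanishes), which in Fourier variables reads $\xi_1\hat{\omega}_1+\xi_2\hat{\omega}_2+\xi_3\hat{\omega}_3=0$, and substitute the relations $\hat{\omega}_2(\xi)=-\hat{\omega}_1(P_{12}\xi)$ and $\hat{\omega}_3(\xi)=-\hat{\omega}_1(P_{13}\xi)$ established in \Cref{VortNormProp}. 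This produces precisely $\xi_1\hat{\omega}_1(\xi)-\xi_2\hat{\omega}_1(P_{12}\xi)-\xi_3\hat{\omega}_1(P_{13}\xi)=0$, so $\omega_1\in H^s_*\cap\dot{H}^{-1}$.

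For the reverse direction, given $f\in H^s_*\cap\dot{H}^{-1}$, I would define a candidate vorticity $W=(W_1,W_2,W_3)$ by $W_1(x)=f(x)$, $W_2(x)=-f(P_{12}x)$, $W_3(x)=-f(P_{13}x)$, the only choice compatible with permutation symmetry. Since orthogonal changes of variable preserve the $H^s$ and $\dot{H}^{-1}$ norms, $W\in H^s\cap\dot{H}^{-1}$ with all three components of equal norm. The two facts to verify are: (i) $\nabla\cdot W=0$; and (ii) $W^{P_{12}}=-W$ and $W^{P_{13}}=-W$. Claim (i) is exactly the Fourier-space constraint: computing $\widehat{W_2}(\xi)=-\hat{f}(P_{12}\xi)$ and $\widehat{W_3}(\xi)=-\hat{f}(P_{13}\xi)$ and forming $\xi\cdot\widehat{W}(\xi)$ reproduces the vanishing combination in the definition of $H^s_*$.

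Claim (ii) is where the physical-space antisymmetry $f(x)=-f(P_{23}x)$ is essential, and this is the step I expect to be the main obstacle. Writing $W^{P_{12}}(x)=(W_2(P_{12}x),W_1(P_{12}x),W_3(P_{12}x))$, the first two components agree with $-W(x)$ automatically from the definitions and $P_{12}^2=I$, but the third requires $W_3(P_{12}x)=-f(P_{13}P_{12}x)=-f(P_fx)$ to equal $-W_3(x)=f(P_{13}x)$; this is precisely $f(P_{13}x)=-f(P_fx)$, which follows from the $P_{23}$-antisymmetry of $f$ together with the group identity $P_{23}P_{13}=P_f$. The relation $W^{P_{13}}=-W$ reduces in the same way to $f(P_{12}x)=-f(P_bx)$, which follows from $P_{23}P_{12}=P_b$. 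With (i) and (ii) in hand, I would set $\tilde{u}=\nabla\times(-\Delta)^{-1}W$; this is divergence-free, satisfies $\nabla\times\tilde{u}=W$ because $\nabla\cdot W=0$ (so $\tilde{\omega}=W$ and $\tilde{\omega}_1=f$), lies in $H^{s+1}_{df}$ by the Biot-Savart isometry $\|\tilde{u}\|_{H^{s+1}}=\|W\|_{H^s\cap\dot{H}^{-1}}$, and is permutation symmetric by \Cref{PermuteGenerateCor} applied to (ii). Uniqueness of such $\tilde{u}$ is then immediate from the final assertion of \Cref{VortNormProp}.
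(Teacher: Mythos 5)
Your proposal is correct and follows essentially the same route as the paper: both directions use \Cref{VortPermSymThm}/\Cref{FourierProp} plus the divergence-free constraint for the forward implication, and both construct the candidate vorticity $\bigl(f(x),-f(P_{12}x),-f(P_{13}x)\bigr)$, verify $\omega^{P_{12}}=\omega^{P_{13}}=-\omega$ via the $P_{23}$-antisymmetry of $f$ and the group identities $P_{13}P_{12}=P_{23}P_{13}=P_f$ and $P_{12}P_{13}=P_{23}P_{12}=P_b$, and then invoke \Cref{PermuteGenerateCor} and \Cref{VortNormProp} for symmetry and uniqueness. The only (harmless) additions are your explicit remarks on norm membership and the Biot--Savart isometry, which the paper leaves implicit.
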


\begin{proof}
    Recall that the condition for $f$ to be in the constraint space is
    \begin{equation}
    f(x)=-f(P_{23}x),
    \end{equation}
    and that in Fourier space
    \begin{equation}
    \xi_1 \hat{f}(\xi) 
    -\xi_2 \hat{f}(P_{12}\xi)
    -\xi_3 \hat{f}(P_{13}\xi)
    =0.
    \end{equation}
    Suppose $u\in H^{s+1}_{df}$ is permutation symmetric and $\omega=\nabla \times u$.
    Then applying \Cref{VortPermSymThm}, we can see that
    \begin{align}
    \omega&=- \omega^{P_{12}}\\
            &=- \omega^{P_{13}}.
    \end{align}
    Applying \Cref{FourierProp}, we can then see that
    \begin{align}
\hat{\omega}&=-\hat{\omega}^{P_{12}} \\
        &=- \hat{\omega}^{P_{13}}.
    \end{align}
    This implies that
    \begin{align}
    \hat{\omega}_2(\xi)
    &= 
    -\hat{\omega}_1(P_{12}\xi) \\
    \hat{\omega}_3(\xi)
    &= 
    -\hat{\omega}_1(P_{13}\xi).
    \end{align}
    The divergence free constraint $\nabla\cdot\omega=0$ can be expressed in Fourier space as
    \begin{equation}
    \xi\cdot\hat{\omega}(\xi)=0,
    \end{equation}
    and so
    \begin{equation}
    \xi_1\hat{\omega}_1(\xi)
    -\xi_2\hat{\omega}_1(P_{12}\xi)
    -\xi_3\hat{\omega}_1(P_{13}\xi)=0.
    \end{equation}
    Finally, observe that $\omega=-\omega^{P_{23}}$,
    which implies
    \begin{equation}
    \omega_1(x)=-\omega_1(P_{23}x).
    \end{equation}
    Therefore, we can conclude that $\omega_1\in H^s_*\cap \dot{H}^{-1}$.

    Now suppose $f\in H^s_*\cap \dot{H}^{-1}$.
    We will define $\omega$ (distinct from the vorticity above) by 
    \begin{equation}
    \omega(x)
    =
    \left(\begin{array}{c}
         f(x) \\
        -f(P_{12}x) \\
        -f(P_{13}x) 
    \end{array}\right).
    \end{equation}
    It then follows that
    \begin{equation}
    \hat{\omega}(\xi)
    =
    \left(\begin{array}{c}
         \hat{f}(\xi) \\
        -\hat{f}(P_{12}\xi) \\
        -\hat{f}(P_{13}\xi)  
    \end{array}\right),
    \end{equation}
    and so we can compute that
    \begin{equation}
    \xi\cdot \hat{\omega}(\xi)
    =
    \xi_1 \hat{f}(\xi)
    -\xi_2 \hat{f}(P_{12}\xi)
    -\xi_3 \hat{f}(P_{13}\xi)
    =0.
    \end{equation}
    Therefore $\nabla\cdot\omega=0$.
    Define $u\in H^{s+1}_{df}$ by
    \begin{equation}
    u=\nabla\times (-\Delta)^{-1}\omega.
    \end{equation}
    The Biot-Savart law implies that
    \begin{equation}
    \omega=\nabla\times u,
    \end{equation}
    so it only remains to show that $u$ is permutation symmetric. 
    
    We will do this using \Cref{PermuteGenerateCor}.
    Observe that
    \begin{equation}
    P_b= 
    P_{12}P_{13}=P_{23}P_{12}=P_{13}P_{23},
    \end{equation}
    and that
    \begin{equation}
    P_{f}=
    P_{13}P_{12}=P_{12}P_{23}=P_{23}P_{13}.
    \end{equation}
    Observe that
    \begin{align}
    \omega^{P_{12}}(x)
    &=
    \left(\begin{array}{c}
         -f(P_{12}^2 x)  \\
         f(P_{12}x) \\
         -f(P_{13} P_{12}x) 
    \end{array}\right) \\
    &=
    \left(\begin{array}{c}
         -f(x)  \\
         f(P_{12}x) \\
         -f(P_{23} P_{13}x) 
    \end{array}\right) \\
    &=
    \left(\begin{array}{c}
         -f(x)  \\
         f(P_{12}x) \\
         f( P_{13}x) 
    \end{array}\right) \\
    &=-\omega(x).
    \end{align}
    Note that in the last step we used the fact that $-f(P_{23}y)=f(y)$ for all $y\in\mathbb{R}^3$, and let $y=P_{13}x$.
    Proceeding similarly for $P_{13}$, we find that
    \begin{align}
    \omega^{P_{13}}(x)
    &=
    \left(\begin{array}{c}
    -f(P_{13}^2 x) \\
    -f(P_{12}P_{13}x) \\
    f(P_{13}x)
    \end{array}\right) \\
    &=
    \left(\begin{array}{c}
    -f(x) \\
    -f(P_{23}P_{12}x) \\
    f(P_{13}x)
    \end{array}\right) \\
    &=
    \left(\begin{array}{c}
    -f(x) \\
    f(P_{12}x) \\
    f(P_{13}x)
    \end{array}\right) \\
    &=
    -\omega(x).
    \end{align}
    We have shown that
    \begin{equation}
    \omega^{P_{12}}=\omega^{P_{13}}=-\omega,
    \end{equation}
    and so applying \Cref{PermuteGenerateCor},
    we can conclude that $u$ is permutation symmetric.
    Finally, note that \Cref{VortNormProp} implies the uniqueness of $u$, and this completes the proof.
\end{proof}

\begin{corollary} \label{ExistOneCompCor}
    Suppose $\omega_1^0\in H^s_*\cap \dot{H}^{-1}, s>\frac{3}{2}$. Then there exists a solution to the single-component, permutation-symmetric  Euler vorticity equation 
    $\omega_1\in C\left([0,T_{max});H^s_*\cap \dot{H}^{-1}\right) \cap
    C^1\left([0,T_{max});H^{s-1}_*\right)$,
    where
    \begin{align}
    \omega_2(x)
    &=
    -\omega_1(x_2,x_1,x_3) \\
    \omega_3(x)
    &=
    -\omega_1(x_3,x_2,x_1).
    \end{align}
    \begin{equation}
    u(x)=\int_{\mathbb{R}^3}
    G(x,y) \omega_1(y) \diff y,
    \end{equation}
    where
    \begin{equation}
    G(x,y)=
    \frac{1}{|x-y|^3}\left(\begin{array}{c}
         0  \\ -x_3+y_3 \\ x_2-y_2 
    \end{array}\right)
    +
    \frac{1}{|x-P_{12}(y)|^3}
    \left(\begin{array}{c}
         -x_3+y_3  \\ 0 \\ x_1-y_2 
    \end{array}\right)
    +
    \frac{1}{|x-P_{13}(y)|^3}
    \left(\begin{array}{c}
         x_2-y_2  \\ -x_1+y_3 \\ 0 
    \end{array}\right).
    \end{equation}
    Furthermore, $u\in C\left([0,T_{max});
    H^{s+1}_{df}\right)
    \cap
    C^1\left([0,T_{max});
    H^s_{df}\right)$ is a permutation symmetric solution of the Euler equation.
    We also have a lower bound on the time of existence
    \begin{equation}
    T_{max}
    \geq 
    \frac{C_s}{\left\|\omega_1^0
    \right\|_{H^s\cap\dot{H}^{-1}}},
    \end{equation}
    where $C_s>0$ is an absolute constant independent of $\omega_1^0$
\end{corollary}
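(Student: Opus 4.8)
The plan is to reduce the scalar problem to the full velocity formulation, run the classical local existence theory there, and then project back down to the first vorticity component, using that permutation symmetry propagates under the Euler flow. The key numerological observation is that $s>\frac{3}{2}$ is equivalent to $s+1>\frac{5}{2}$, so the regularity of the reconstructed velocity field is exactly at the threshold where \Cref{EulerExistThm} applies. Concretely, given $\omega_1^0\in H^s_*\cap\dot{H}^{-1}$, I would first invoke \Cref{ConstraintThm} to produce the unique permutation symmetric $u^0\in H^{s+1}_{df}$ whose first vorticity component is $\omega_1^0$. Here the hypothesis $\omega_1^0\in\dot{H}^{-1}$ is precisely what is needed to place $u^0$ in $L^2$, as recorded by the isometry $\|\omega_1^0\|_{H^s\cap\dot{H}^{-1}}^2=\frac{1}{3}\|u^0\|_{H^{s+1}}^2$ of \Cref{VortNormProp}.

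Next I would apply \Cref{EulerExistThm} with $s$ replaced by $s+1$ to the datum $u^0$, obtaining a unique strong Euler solution $u\in C\left([0,T_{max});H^{s+1}_{df}\right)\cap C^1\left([0,T_{max});H^{s}_{df}\right)$ with $T_{max}\geq C_{s+1}/\|u^0\|_{H^{s+1}}$. Since $u^0$ is permutation symmetric, \Cref{PermuteEulerProp} guarantees that $u(\cdot,t)$ is permutation symmetric for every $t\in[0,T_{max})$. Setting $\omega_1(\cdot,t)$ to be the first component of $\nabla\times u(\cdot,t)$, \Cref{ConstraintThm} shows that $\omega_1(\cdot,t)\in H^s_*\cap\dot{H}^{-1}$ at each time, \Cref{VortPermSymThm} supplies the stated permutation formulas $\omega_2(x)=-\omega_1(x_2,x_1,x_3)$ and $\omega_3(x)=-\omega_1(x_3,x_2,x_1)$, and \Cref{BiotSavart} gives the Biot--Savart representation $u(x)=\int_{\mathbb{R}^3}G(x,y)\omega_1(y)\diff y$. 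For the evolution equation itself, \Cref{EulerVortEquivThm} tells us that $\omega=\nabla\times u$ solves the full Euler vorticity equation, and reading off the first component yields exactly \eqref{OneCompCorEqn}. The time-of-existence lower bound then follows by substituting $\|u^0\|_{H^{s+1}}=\sqrt{3}\,\|\omega_1^0\|_{H^s\cap\dot{H}^{-1}}$ into the bound from \Cref{EulerExistThm} and absorbing the constant $\sqrt{3}$ into $C_s$.

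It remains to transfer the temporal regularity from $u$ to $\omega_1$. Because the first vorticity component is produced by the first-order operator $\omega_1=\partial_2 u_3-\partial_3 u_2$, which is bounded $H^{k}\to H^{k-1}$ for every $k$, continuity of $u$ into $H^{s+1}_{df}$ immediately gives continuity of $\omega_1$ into $H^s$, and differentiability of $u$ into $H^s_{df}$ gives $\omega_1\in C^1\left([0,T_{max});H^{s-1}\right)$; combining this with the fixed-time constraint membership from \Cref{ConstraintThm} (together with the $\dot{H}^{-1}$ bound from \Cref{VortNormProp} for the continuity statement) yields $\omega_1\in C\left([0,T_{max});H^s_*\cap\dot{H}^{-1}\right)\cap C^1\left([0,T_{max});H^{s-1}_*\right)$, as claimed.

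The argument is essentially bookkeeping layered on top of the lift-solve-project scheme, and I do not expect a genuine analytic obstacle, since uniqueness is deferred to a separate stability estimate and is not asserted by this corollary. The one point demanding care is the regularity transfer in time: one must confirm that the reconstruction and projection maps of \Cref{ConstraintThm} behave correctly at both regularity $s$ and regularity $s-1$, so that the $C^1$-in-time statement lands in the constraint space $H^{s-1}_*$ rather than merely in $H^{s-1}$. This is immediate from the Fourier-multiplier structure of the maps and the fact that permutation symmetry of $\partial_t u$ (a closed linear condition inherited from $u(\cdot,t)$) forces $\partial_t\omega_1$ to satisfy the defining constraints of $H^{s-1}_*$, but it is the step where attention to the precise Sobolev index is required.
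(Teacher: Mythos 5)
Your proposal is correct and follows essentially the same lift--solve--project scheme as the paper's own proof: use \Cref{ConstraintThm} to reconstruct $u^0\in H^{s+1}_{df}$, apply \Cref{EulerExistThm} and \Cref{PermuteEulerProp}, take the first component of the vorticity equation, and recover the time-of-existence bound via the norm equivalence in \Cref{VortNormProp}. The only difference is that you spell out the temporal-regularity transfer from $u$ to $\omega_1$ in more detail than the paper does, which is a harmless elaboration rather than a different argument.
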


\begin{proof}
    Because $\omega_1^0\in H^s_*\cap \dot{H}^{-1}$, we can conclude from \Cref{ConstraintThm} that there exists a permutation-symmetric, divergence free vector field
    $u^0\in H^{s+1}_{df}$, such that 
    \begin{equation}
    \omega_1^0=\left(\nabla\times u^0\right)_1.
    \end{equation}
    From \Cref{EulerExistThm} and \Cref{PermuteEulerProp},
    we can see that there exists a permutation-symmetric solution of the Euler equation 
    $u\in C\left([0,T_{max});
    H^{s+1}_{df}\right)
    \cap
    C^1\left([0,T_{max});
    H^s_{df}\right)$,
    and that $\omega=\nabla\times u$
    satisfies the vorticity equation
    \begin{equation}
    \partial_t\omega
    +(u\cdot\nabla)\omega
    -(\omega\cdot\nabla)u=0.
    \end{equation}
    Taking the first component of this equation,
    we can see that
    \begin{equation}
    \partial_t\omega_1
    +(u\cdot\nabla)\omega_1
    -(\omega\cdot\nabla)u_1=0.
    \end{equation}
    Because $u$ is permutation symmetric, we can see that
    $\omega_1\in C\left([0,T_{max});H^s_*\cap \dot{H}^{-1}\right) \cap
    C^1\left([0,T_{max});H^{s-1}_*\right)$.
    Applying the lower bound on the time of existence from \Cref{EulerExistThm} and the equivalence of Sobolev norms from \Cref{VortNormProp}, we find that
    \begin{align}
    T_{max}
    &\geq 
    \frac{C'_{s+1}}{\left\|u^0\right\|_{H^{s+1}}} \\
    &= 
    \frac{C'_{s+1}}{\sqrt{3}\left\|\omega_1^0
    \right\|_{H^{s}\cap\dot{H}^{-1}}}.
    \end{align}
    Finally, applying \Cref{VortPermSymThm,BiotSavart}, we can compute $\omega_2,\omega_3$ and $u$ in terms of $\omega_1$.
\end{proof}

\begin{remark}
    We have shown that there exists a solution to the single-component, permutation-symmetric Euler vorticity equation such that the associated velocity $u$ is a solution to the Euler equation, but we have not yet shown that solutions of the single-component, permutation-symmetric Euler vorticity equation are unique. Solutions of the Euler equation are unique, but we have not proven that the equivalence of solutions of these equations, because there are also evolution equations for $\omega_2$ and $\omega_3$ that must be satisfied. We will prove that solutions of the evolution equation for $\omega_1$ are unique by means of a stability estimate, and uniqueness will guarantee the equivalence of solutions of the single-component, permutation-symmetric Euler vorticity equation and the Euler equation when considering permutation symmetric flows.
\end{remark}

\begin{theorem} \label{StabilityThm}
    Suppose $\omega_1,\Tilde{\omega}_1\in C\left([0,T];H^s_*\cap \dot{H}^{-1}\right) \cap
 C^1\left([0,T];H^{s-1}_*\right), s>\frac{3}{2}$,
are solutions to the single-component, permutation-symmetric  Euler vorticity equation. Then for all $0\leq t\leq T$,
\begin{equation}
    \|(\omega_1-\Tilde{\omega}_1)
    (\cdot,t)\|_{L^2}^2
    \leq 
    \left\|\omega_1^0-\Tilde{\omega}_1^0
    \right\|_{L^2}^2
    \exp\left(C_s \int_0^t
    \|(\omega_1+\Tilde{\omega}_1)
    (\cdot,\tau)\|_{H^s}\diff\tau\right),
\end{equation}
where $C_s>0$ is an absolute constant independent of $\omega_1^0, \Tilde{\omega}_1^0$.
Note in particular this implies that solutions of the single-component, permutation symmetric Euler vorticity equation are unique.
\end{theorem}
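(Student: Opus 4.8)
The plan is to derive a closed differential inequality for $\|\eta(\cdot,t)\|_{L^2}^2$, where $\eta=\omega_1-\Tilde{\omega}_1$, and then invoke Gr\"onwall's inequality; all manipulations below are justified by the assumed regularity $C^1\big([0,T];H^{s-1}_*\big)$. Write $v=u-\Tilde{u}$ and $\zeta=\omega-\Tilde{\omega}$. Since the permutation-symmetric Biot--Savart law of \Cref{BiotSavart} and the component relations of \Cref{VortPermSymThm} depend \emph{linearly} on $\omega_1$, the pair $(v,\zeta)$ is exactly the velocity and vorticity determined by $\eta$; in particular $\zeta$ is the divergence-free field with components $(\eta,-\eta\circ P_{12},-\eta\circ P_{13})$ and $v=\nabla\times(-\Delta)^{-1}\zeta$. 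Subtracting the two evolution equations and regrouping the quadratic terms gives
\[
\partial_t\eta+(u\cdot\nabla)\eta+(v\cdot\nabla)\Tilde{\omega}_1-(\omega\cdot\nabla)v_1-(\zeta\cdot\nabla)\Tilde{u}_1=0,
\]
with a second, equally valid grouping obtained by exchanging the tilded and untilded fields in the lower-order factors. I would pair this identity with $\eta$ in $L^2$. The transport term vanishes because $\langle(u\cdot\nabla)\eta,\eta\rangle=0$ for the divergence-free field $u$, and averaging the two groupings symmetrizes every smooth factor, leaving
\[
\tfrac12\tfrac{\diff}{\diff t}\|\eta\|_{L^2}^2=-\tfrac12\langle(v\cdot\nabla)(\omega_1+\Tilde{\omega}_1),\eta\rangle+\tfrac12\langle((\omega+\Tilde{\omega})\cdot\nabla)v_1,\eta\rangle+\tfrac12\langle(\zeta\cdot\nabla)(u_1+\Tilde{u}_1),\eta\rangle.
\]
This symmetrization is what produces the sum $\omega_1+\Tilde{\omega}_1$ in the final bound; it is genuinely necessary, since $\|\omega_1\|_{H^s}+\|\Tilde{\omega}_1\|_{H^s}$ is not controlled by $\|\omega_1+\Tilde{\omega}_1\|_{H^s}$.

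It then remains to bound each of the three trilinear terms by $C_s\|\omega_1+\Tilde{\omega}_1\|_{H^s}\|\eta\|_{L^2}^2$. The two stretching terms are routine: for the smooth factors I use $\|\omega+\Tilde{\omega}\|_{L^\infty}\leq 3\|\omega_1+\Tilde{\omega}_1\|_{L^\infty}\lesssim\|\omega_1+\Tilde{\omega}_1\|_{H^s}$ and $\|\nabla(u_1+\Tilde{u}_1)\|_{L^\infty}\lesssim\|\omega_1+\Tilde{\omega}_1\|_{H^s}$ (using \Cref{VortNormProp}, the boundedness of $\nabla u=\mathrm{Riesz}(\omega)$ on $H^s$, and $H^s\hookrightarrow L^\infty$ for $s>\tfrac32$; note only the $H^s$ norm, not $\dot H^{-1}$, enters), together with the rough factors $\|\zeta\|_{L^2}\leq 3\|\eta\|_{L^2}$ and $\|\nabla v_1\|_{L^2}\leq\|\nabla v\|_{L^2}=\|\zeta\|_{L^2}\leq 3\|\eta\|_{L^2}$. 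The delicate term is the advective one $\langle(v\cdot\nabla)(\omega_1+\Tilde{\omega}_1),\eta\rangle$, where the rough velocity $v$ appears undifferentiated. One must avoid estimating $\|v\|_{L^2}$, which equals $\sqrt3\,\|\eta\|_{\dot H^{-1}}$ and is \emph{not} controlled by $\|\eta\|_{L^2}$. Instead I would spend the gained derivative on the critical Sobolev embedding $\dot H^1(\mathbb{R}^3)\hookrightarrow L^6$,
\[
\|v\|_{L^6}\lesssim\|\nabla v\|_{L^2}=\|\zeta\|_{L^2}\leq 3\|\eta\|_{L^2},
\]
and then apply H\"older with exponents $(6,3,2)$,
\[
|\langle(v\cdot\nabla)(\omega_1+\Tilde{\omega}_1),\eta\rangle|\leq\|v\|_{L^6}\,\|\nabla(\omega_1+\Tilde{\omega}_1)\|_{L^3}\,\|\eta\|_{L^2}\lesssim\|\omega_1+\Tilde{\omega}_1\|_{H^s}\,\|\eta\|_{L^2}^2,
\]
where $\|\nabla(\omega_1+\Tilde{\omega}_1)\|_{L^3}\lesssim\|\omega_1+\Tilde{\omega}_1\|_{H^s}$ comes from $H^s\hookrightarrow W^{1,3}$, which holds precisely because $s>\tfrac32$.

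The main obstacle is exactly this advective term. The naive $L^2$ pairing exposes the velocity difference $v$, which is one derivative smoother than $\eta$ but in the unhelpful direction: it is controlled by $\|\eta\|_{L^2}$ only in $\dot H^{-1}$, so a bound routed through $\|v\|_{L^2}$ cannot close in $L^2$ and would not even yield uniqueness. The key point is that the same extra derivative can instead be cashed in through $\dot H^1\hookrightarrow L^6$, at the cost of needing $\nabla(\omega_1+\Tilde{\omega}_1)\in L^3$; this is available exactly in the stated range $s>\tfrac32$, which is why that hypothesis appears. Collecting the three estimates yields
\[
\tfrac{\diff}{\diff t}\|\eta\|_{L^2}^2\leq C_s\,\|(\omega_1+\Tilde{\omega}_1)(\cdot,t)\|_{H^s}\,\|\eta\|_{L^2}^2,
\]
and Gr\"onwall's inequality gives the claimed bound. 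Taking $\omega_1^0=\Tilde{\omega}_1^0$ forces $\eta\equiv0$, which is the asserted uniqueness.
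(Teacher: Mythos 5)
Your proposal is correct and follows essentially the same route as the paper: the same symmetrized sum/difference regrouping, cancellation of the transport term by incompressibility, the $(2,6,3)$ H\"older split with $\dot H^1\hookrightarrow L^6$ for the advective term, $L^\infty$ control of the smooth factors in the two stretching terms via $H^s\hookrightarrow W^{1,\infty}$-type embeddings for $s>\tfrac32$, and Gr\"onwall. The only difference is expository — you spell out why routing through $\|v\|_{L^2}$ cannot close and why $\|\nabla v\|_{L^2}=\|\zeta\|_{L^2}\le 3\|\eta\|_{L^2}$, steps the paper compresses into citations of the Sobolev inequality and \Cref{VortNormProp}.
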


\begin{proof}
    We begin by computing that
    \begin{equation}
    \frac{\diff}{\diff t}
    \|(\omega_1-\Tilde{\omega}_1)
    (\cdot,t)\|_{L^2}^2
    =
    2\left<\omega_1-\Tilde{\omega}_1,
    -(u\cdot\nabla)\omega_1
    +(\omega\cdot\nabla)u_1
    +(\Tilde{u}\cdot\nabla)\Tilde{\omega}_1
    -(\Tilde{\omega}_1\cdot\nabla)\Tilde{u}_1
    \right>.
    \end{equation}
    Rearranging terms we can see that
    \begin{multline}
    \frac{\diff}{\diff t}
    \|(\omega_1-\Tilde{\omega}_1)
    (\cdot,t)\|_{L^2}^2
    =
    -\left<\omega_1-\Tilde{\omega}_1,
    (u+\Tilde{u})\cdot\nabla (\omega_1-\Tilde{\omega}_1)\right>
    -\left<\omega_1-\Tilde{\omega}_1,
    (u-\Tilde{u})\cdot\nabla 
    (\omega_1+\Tilde{\omega}_1)\right> \\
    +\left<\omega_1-\Tilde{\omega}_1,
    (\omega+\Tilde{\omega})\cdot\nabla 
    (u_1-\Tilde{u}_1)\right>
    +\left<\omega_1-\Tilde{\omega}_1,
    (\omega-\Tilde{\omega})\cdot\nabla 
    (u_1+\Tilde{u}_1)
    \right>.
    \end{multline}
    Integrating by parts, we can see that
    \begin{equation}
    -\left<\omega_1-\Tilde{\omega}_1,
    (u+\Tilde{u})\cdot\nabla (\omega_1-\Tilde{\omega}_1)\right>
    =
    \left<\omega_1-\Tilde{\omega}_1,
    (u+\Tilde{u})\cdot\nabla (\omega_1-\Tilde{\omega}_1)\right>
    =0.
    \end{equation}
    Applying H\"older's inequality, the Sobolev inequality, and \Cref{VortNormProp}, 
    we find that
    \begin{align}
    -\left<\omega_1-\Tilde{\omega}_1,
    (u-\Tilde{u})\cdot\nabla 
    (\omega_1+\Tilde{\omega}_1)\right> 
    &\leq 
    \|\omega_1-\Tilde{\omega}_1\|_{L^2}
    \|u-\Tilde{u}\|_{L^6}
    \|\nabla 
    (\omega_1+\Tilde{\omega}_1)\|_{L^3} \\
    &\leq 
    C \|\omega_1-\Tilde{\omega}_1\|_{L^2}^2
    \|\omega_1+\Tilde{\omega}_1\|_{H^\frac{3}{2}},
    \end{align}
    that
    \begin{align}
    +\left<\omega_1-\Tilde{\omega}_1,
    (\omega+\Tilde{\omega})\cdot\nabla 
    (u_1-\Tilde{u}_1)\right>
    &\leq 
    \|\omega_1-\Tilde{\omega}_1\|_{L^2} \|\omega+\Tilde{\omega}\|_{L^\infty}
    \|\nabla(u_1-\Tilde{u}_1)\|_{L^2} \\
    &\leq 
    C \|\omega_1-\Tilde{\omega}_1\|_{L^2}^2
    \|\omega_1+\Tilde{\omega}_1\|_{H^s},
    \end{align}
    and that
    \begin{align}
    \left<\omega_1-\Tilde{\omega}_1,
    (\omega-\Tilde{\omega})\cdot\nabla 
    (u_1+\Tilde{u}_1)\right>
    &\leq 
    \|\omega_1-\Tilde{\omega}_1\|_{L^2}
    \|\omega-\Tilde{\omega}\|_{L^2} 
    \|\nabla (u_1+\Tilde{u}_1)\|_{L^\infty} \\
    &\leq 
    C \|\omega_1-\Tilde{\omega}_1\|_{L^2}^2
    \|\omega_1+\Tilde{\omega}_1\|_{H^s}.
    \end{align}
    Putting these together we find that
    for all $0<t<T_{max}$
    \begin{equation}
    \frac{\diff}{\diff t}
    \|(\omega_1-\Tilde{\omega}_1)
    (\cdot,t)\|_{L^2}^2
    \leq 
    C \|\omega_1+\Tilde{\omega}_1\|_{H^s} 
    \|\omega_1-\Tilde{\omega}_1\|_{L^2}^2.
    \end{equation}
    Applying Gr\"onwall's inequality, this completes the proof.
\end{proof}

\begin{theorem} $\label{CompReduceThm}$
    Suppose $\omega_1\in C\left([0,T];H^s_*\cap \dot{H}^{-1}\right) \cap
 C^1\left([0,T];H^{s-1}_*\right), s>\frac{3}{2}$
 is the unique solution of the single-component, permutation-symmetric Euler vorticity equation. Then $u\in C\left([0,T_{max};H^{s+1}_{df}\right) 
    \cap C^1\left([0,T_{max};
    H^{s}_{df}\right)$ is a permutation symmetric solution of the Euler equation,
    where $u$ is defined in terms $\omega_1$
    as in as in \Cref{BiotSavart} by
    \begin{equation}
    u(x,t)
    =
    \int_{\mathbb{R}^3}
    G(x,y)\omega_1(y,t)\diff y.
    \end{equation}
\end{theorem}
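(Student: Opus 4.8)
The plan is to deduce this purely from results already in hand: the existence statement of \Cref{ExistOneCompCor} together with the uniqueness furnished by \Cref{StabilityThm}. No fresh analysis of the kernel $G$ or of the nonlinearity is needed. The key observation is that \Cref{ExistOneCompCor} already produces, for the initial datum $\omega_1^0 := \omega_1(\cdot,0)\in H^s_*\cap\dot H^{-1}$, a solution of the single-component equation that arises as the first vorticity component of a genuine permutation-symmetric solution of the Euler equation, and that this solution is recovered from its first vorticity component by exactly the Biot--Savart formula of \Cref{BiotSavart}. Uniqueness then forces the given $\omega_1$ to coincide with this distinguished solution, and hence forces the two associated velocities to agree.

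Concretely, I would first invoke \Cref{ExistOneCompCor} with initial datum $\omega_1^0$ to obtain a permutation-symmetric Euler solution $\Tilde u\in C\left([0,\Tilde T_{max});H^{s+1}_{df}\right)\cap C^1\left([0,\Tilde T_{max});H^s_{df}\right)$ whose first vorticity component $\Tilde\omega_1$ solves the single-component equation with $\Tilde\omega_1(\cdot,0)=\omega_1^0$ and satisfies $\Tilde u(x,t)=\int_{\mathbb{R}^3}G(x,y)\Tilde\omega_1(y,t)\diff y$. On any closed interval $[0,T']$ with $T'<\min(T,\Tilde T_{max})$, both $\omega_1$ and $\Tilde\omega_1$ lie in $C\left([0,T'];H^s_*\cap\dot H^{-1}\right)\cap C^1\left([0,T'];H^{s-1}_*\right)$ and solve the single-component equation with the same initial data, so \Cref{StabilityThm} gives $\omega_1=\Tilde\omega_1$ there. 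Since $u$ and $\Tilde u$ are produced by applying the \emph{same} Biot--Savart kernel $G$ to the same function, I conclude $u=\Tilde u$ on $[0,T']$; in particular $u$ is a permutation-symmetric Euler solution with the asserted regularity on that interval, and it carries the velocity--vorticity correspondence via \Cref{EulerVortEquivThm}.

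The one point requiring genuine care, and the step I expect to be the main obstacle, is promoting this identification to all of $[0,T]$, i.e.\ showing $\Tilde T_{max}>T$ so that $\Tilde u$ cannot blow up while $\omega_1$ remains controlled. If $\Tilde T_{max}=+\infty$ this is immediate, so suppose $\Tilde T_{max}<+\infty$ and, for contradiction, $\Tilde T_{max}\le T$. By the previous paragraph $\Tilde\omega_1=\omega_1$ on $[0,\Tilde T_{max})$, and since $\omega_1\in C\left([0,T];H^s_*\cap\dot H^{-1}\right)$ is continuous on the compact interval $[0,T]$ with $s>\frac32$, the embedding $H^s\hookrightarrow L^\infty$ makes $\sup_{0\le t\le T}\|\omega_1(\cdot,t)\|_{L^\infty}$ finite. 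Using $\|\omega\|_{L^\infty}\le 3\|\omega_1\|_{L^\infty}$ from \Cref{VortNormProp}, this bounds $\int_0^{\Tilde T_{max}}\|\Tilde\omega(\cdot,t)\|_{L^\infty}\diff t<+\infty$, contradicting the Beale--Kato--Majda blowup criterion of \Cref{EulerExistThm}. Hence $\Tilde T_{max}>T$, the equality $u=\Tilde u$ holds on all of $[0,T]$, and the regularity $u\in C\left([0,T];H^{s+1}_{df}\right)\cap C^1\left([0,T];H^s_{df}\right)$ follows from the mapping properties in \Cref{ExistOneCompCor} (equivalently from the isometry $\|u\|_{H^{s+1}}=\sqrt3\,\|\omega_1\|_{H^s\cap\dot H^{-1}}$ of \Cref{VortNormProp}). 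The argument is thus a soft one: existence of a good solution plus uniqueness, with the only quantitative input being the continuation bookkeeping just described.
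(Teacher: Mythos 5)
Your proof is correct and follows essentially the same route as the paper's: existence of a permutation-symmetric Euler solution realizing the single-component equation via \Cref{ExistOneCompCor}, followed by identification with the given $\omega_1$ through the uniqueness supplied by \Cref{StabilityThm}. Your additional continuation argument (ruling out $\Tilde T_{max}\le T$ via the Beale--Kato--Majda criterion and the bound $\|\omega\|_{L^\infty}\le 3\|\omega_1\|_{L^\infty}$) is a detail the paper's proof passes over silently, and it is a worthwhile piece of bookkeeping to make explicit.
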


\begin{proof}
We showed in \Cref{ExistOneCompCor} that there exists a solution of the the Euler equation whose vorticity satisfies \eqref{OneCompCorEqn}.
\Cref{StabilityThm} implies that solutions of the single-component, permutation-symmetric Euler vorticity equation are unique, and the first component of the vorticity of this solution of the Euler equation $\omega_1$ must give the only solution of \eqref{OneCompCorEqn}. This completes the proof.
\end{proof}

\begin{corollary}
    Suppose $u\in C\left([0,T_{max};H^s_{df}\right) 
    \cap C^1\left([0,T_{max};
    H^{s-1}_{df}\right), s>\frac{5}{2}$ is permutation symmetric.
    Then $u$ is a solution of the Euler equation if and only if 
    \begin{equation}
    \partial_t\omega_1
    +(u\cdot\nabla)\omega_1
    -(\omega\cdot\nabla)u_1=0.
    \end{equation}
\end{corollary}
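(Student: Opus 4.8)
The plan is to treat the two implications separately, leaning on the machinery already assembled in \Cref{EulerVortEquivThm}, \Cref{CompReduceThm}, and the uniqueness estimate of \Cref{StabilityThm}. The forward implication is essentially a restriction-to-one-component statement and uses no symmetry, while the reverse implication is where the permutation-symmetric structure is genuinely needed; there the strategy is to identify the given $u$ with the Biot--Savart reconstruction of $\omega_1$ furnished by \Cref{CompReduceThm}. For the forward direction, suppose $u$ solves the Euler equation. Since $u\in C([0,T_{max});H^s_{df})\cap C^1([0,T_{max});H^{s-1}_{df})$ with $s>\tfrac52$, \Cref{EulerVortEquivThm} gives that $\omega=\nabla\times u$ solves the full vorticity equation $\partial_t\omega+(u\cdot\nabla)\omega-(\omega\cdot\nabla)u=0$, and reading off its first component yields exactly $\partial_t\omega_1+(u\cdot\nabla)\omega_1-(\omega\cdot\nabla)u_1=0$.

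For the reverse direction, suppose $u$ is permutation symmetric and $\omega_1$ satisfies the scalar equation. First I would record the correct regularity: because $u\in H^s_{df}$ with $s>\tfrac52$, its vorticity lies in $H^{s-1}_{df}$, so by \Cref{ConstraintThm} (invoked with exponent $s-1>\tfrac32$ in place of its $s$) the component $\omega_1$ lies in $H^{s-1}_*\cap\dot H^{-1}$, and permutation symmetry together with \Cref{VortPermSymThm} forces $\omega_2(x)=-\omega_1(P_{12}x)$ and $\omega_3(x)=-\omega_1(P_{13}x)$. Thus $\omega_1$ is a bona fide solution of the single-component, permutation-symmetric vorticity equation in the regularity class of \Cref{CompReduceThm}, and by \Cref{StabilityThm} it is the unique such solution with its own initial data. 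Applying \Cref{CompReduceThm} on each compact subinterval $[0,T]\subset[0,T_{max})$ produces a permutation-symmetric Euler solution $\tilde u$, defined from $\omega_1$ via the kernel $G$ of \Cref{BiotSavart}. The last step is to observe that $\tilde u$ is none other than $u$: since $u$ is itself permutation symmetric with first vorticity component $\omega_1$, \Cref{BiotSavart} gives $u(\cdot,t)=\int_{\mathbb{R}^3}G(\cdot,y)\,\omega_1(y,t)\diff y=\tilde u(\cdot,t)$ (equivalently, one invokes the injectivity of the map $\omega_1\mapsto u$ recorded in \Cref{VortNormProp}). Hence $u=\tilde u$ solves the Euler equation.

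The argument is almost entirely an assembly of prior results, so the main obstacle is bookkeeping rather than analysis. The one point requiring care is the index shift: the single-component theory of \Cref{ExistOneCompCor}--\Cref{CompReduceThm} is stated for $\omega_1^0\in H^s_*\cap\dot H^{-1}$ with the velocity in $H^{s+1}$, whereas here the hypothesis is on $u\in H^s$ with $s>\tfrac52$, so those results must be applied with $s-1$ in place of $s$, the constraint $s-1>\tfrac32$ being exactly what keeps \Cref{StabilityThm} and \Cref{CompReduceThm} in force. A second, purely logical subtlety worth flagging is that one should resist proving the reverse direction by differentiating the component relations $\omega_2=-\omega_1\circ P_{12}$ and $\omega_3=-\omega_1\circ P_{13}$ to recover the remaining two components of the vorticity equation by hand; this equivariance is the conceptual reason the statement holds, but it is cleaner and already rigorous to route everything through uniqueness of the scalar equation and the identification $u=\tilde u$ via Biot--Savart.
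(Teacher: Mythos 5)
Your proposal is correct and takes essentially the same route as the paper, whose own proof is simply a citation of \Cref{ConstraintThm} and \Cref{CompReduceThm}; you have merely made explicit the forward direction via \Cref{EulerVortEquivThm}, the index shift $s\mapsto s-1$, and the identification $u=\tilde u$ through uniqueness and the Biot--Savart reconstruction, all of which the paper leaves implicit.
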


\begin{proof}
    This follows immediately from \Cref{ConstraintThm,CompReduceThm}.
\end{proof}

We have completed the proof of every part of \Cref{OneVortCompThmIntro} except the energy equality and the Beale-Kato-Majda criterion; we will now prove variants of both of these for the single-component, permutation-symmetric Euler vorticity equation.

\begin{corollary}
Suppose $\omega_1\in C\left([0,T_{max});H^s_*\cap \dot{H}^{-1}\right) \cap
 C^1\left([0,T_{max});H^{s-1}_*\right)$,
is the unique solution to the single-component, permutation-symmetric Euler vorticity equation.
Then for all $0<t<T_{max}$, 
    \begin{equation}
    \|\omega_1(\cdot,t)
    \|_{\dot{H}^{-1}}^2
    =
    \left\|\omega_1^0
    \right\|_{\dot{H}^{-1}}^2.
    \end{equation}
\end{corollary}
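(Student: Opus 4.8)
The plan is to reduce the claimed conservation law to the energy equality for the associated velocity field. By \Cref{CompReduceThm}, the solution $\omega_1$ is the first vorticity component of a genuine permutation-symmetric solution $u\in C\left([0,T_{max});H^{s+1}_{df}\right)$ of the Euler equation, so the energy equality $\|u(\cdot,t)\|_{L^2}^2=\left\|u^0\right\|_{L^2}^2$ from \Cref{EulerExistThm} is available. It therefore suffices to establish the static identity
\begin{equation}
\|\omega_1\|_{\dot{H}^{-1}}^2=\tfrac{1}{3}\|u\|_{L^2}^2
\end{equation}
for every permutation-symmetric, divergence free field, since then the claim follows by evaluating this identity at time $0$ and at time $t$ and comparing.

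To prove this identity I would argue on the Fourier side, mirroring the computation in \Cref{VortNormProp}. First, permutation symmetry gives $\hat{\omega}_2(\xi)=-\hat{\omega}_1(P_{12}\xi)$ and $\hat{\omega}_3(\xi)=-\hat{\omega}_1(P_{13}\xi)$; since $P_{12}$ and $P_{13}$ are orthogonal, the substitutions $\eta=P_{12}\xi$ and $\eta=P_{13}\xi$ preserve both $|\xi|$ and Lebesgue measure, so that $\|\omega_1\|_{\dot{H}^{-1}}=\|\omega_2\|_{\dot{H}^{-1}}=\|\omega_3\|_{\dot{H}^{-1}}$, and hence $\|\omega\|_{\dot{H}^{-1}}^2=3\|\omega_1\|_{\dot{H}^{-1}}^2$. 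Second, because $u$ is divergence free we have $\xi\cdot\hat{u}(\xi)=0$, and the Biot-Savart relation $\hat{\omega}(\xi)=2\pi i\,\xi\times\hat{u}(\xi)$ then yields the pointwise identity $|\hat{\omega}(\xi)|=2\pi|\xi|\,|\hat{u}(\xi)|$. Dividing by $4\pi^2|\xi|^2$ and integrating gives $\|\omega\|_{\dot{H}^{-1}}^2=\|u\|_{L^2}^2$, and combining the two computations produces the desired identity.

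With the identity in hand the conclusion is immediate: for all $0<t<T_{max}$,
\begin{equation}
\|\omega_1(\cdot,t)\|_{\dot{H}^{-1}}^2
=\tfrac{1}{3}\|u(\cdot,t)\|_{L^2}^2
=\tfrac{1}{3}\left\|u^0\right\|_{L^2}^2
=\left\|\omega_1^0\right\|_{\dot{H}^{-1}}^2 .
\end{equation}
I do not expect a genuine analytic obstacle here: every ingredient is either the classical energy equality for strong Euler solutions or an elementary Fourier and orthogonal-change-of-variables computation already implicit in \Cref{VortNormProp}. The only point requiring mild care is justifying that all the $\dot{H}^{-1}$ integrals are finite and the manipulations are legitimate, which is guaranteed by the hypothesis $\omega_1\in\dot{H}^{-1}$ together with the isometry linking $\omega$ to $u\in L^2$; the conceptual content of the proof is simply the recognition that, up to the factor $\tfrac{1}{3}$, the $\dot{H}^{-1}$ norm of the single vorticity component \emph{is} the conserved kinetic energy.
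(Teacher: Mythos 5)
Your proposal is correct and follows exactly the paper's own route: invoke \Cref{CompReduceThm} to obtain the underlying permutation-symmetric Euler solution $u$, apply the classical energy equality, and then use the Fourier-side argument of \Cref{VortNormProp} to identify $\|\omega_1\|_{\dot{H}^{-1}}^2$ with $\tfrac{1}{3}\|u\|_{L^2}^2$. The only difference is that you spell out the orthogonal change-of-variables and Biot-Savart details that the paper leaves implicit by citing \Cref{VortNormProp}.
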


\begin{proof}
    \Cref{CompReduceThm} implies that $u$ is a solution of the Euler equation.
    We know from the energy equality that 
    for all $0\leq t<T_{max}$,
    \begin{equation}
    \|u(\cdot,t)\|_{L^2}^2=
    \left\|u^0\right\|_{L^2}^2.
    \end{equation}
    Following the methods of \Cref{VortNormProp}, we can see that because $u(\cdot,t)$ is a permutation-symmetric, divergence free vector field,
    \begin{equation}
    \|\omega_1\|_{\dot{H}^{-1}}^2
    =
    \|\omega_2\|_{\dot{H}^{-1}}^2
    =
    \|\omega_3\|_{\dot{H}^{-1}}^2
    =
    \frac{1}{3}\|u\|_{L^2}^2,
    \end{equation}
    and the result follows.
\end{proof}

\begin{corollary}
Suppose $\omega_1\in C\left([0,T_{max});H^s_*\cap \dot{H}^{-1}\right) \cap
    u\in C^1\left([0,T_{max});H^{s-1}_*\right)$,
is the unique solution to the single-component, permutation-symmetric Euler vorticity equation.
Then if $T_{max}<+\infty$,
    \begin{equation}
    \int_0^{T_{max}}
    \|\omega_1(\cdot,t)\|_{L^\infty}
    \diff t
    =+\infty.
    \end{equation}
\end{corollary}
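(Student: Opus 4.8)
The plan is to reduce this statement to the classical Beale--Kato--Majda criterion for the full Euler equation recorded in \Cref{EulerExistThm}, exactly as the preceding energy-equality corollary reduced to the energy equality. First I would invoke \Cref{CompReduceThm} to conclude that the velocity field $u$ defined from $\omega_1$ by the permutation-symmetric Biot--Savart law is a permutation symmetric solution of the Euler equation on $[0,T_{max})$, with $\omega=\nabla\times u$ and $\omega_1=(\nabla\times u)_1$.

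Next I would verify that the maximal existence time $T_{max}$ for the single-component equation coincides with the maximal existence time of $u$ as a solution of the Euler equation in $H^{s+1}_{df}$. This is where the norm identity from \Cref{VortNormProp} does the work: since
\begin{equation}
\|\omega_1(\cdot,t)\|_{H^s\cap\dot{H}^{-1}}^2=\frac{1}{3}\|u(\cdot,t)\|_{H^{s+1}}^2,
\end{equation}
the solution $\omega_1$ leaves every bounded set of $H^s_*\cap\dot{H}^{-1}$ precisely when $u$ leaves the corresponding bounded set of $H^{s+1}_{df}$, so the two blowup times agree.

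Now, assuming $T_{max}<+\infty$, the classical Beale--Kato--Majda criterion from \Cref{EulerExistThm} applied to $u$ gives
\begin{equation}
\int_0^{T_{max}}\|\omega(\cdot,t)\|_{L^\infty}\diff t=+\infty.
\end{equation}
To transfer this to $\omega_1$ I would use the pointwise Lebesgue bound from \Cref{VortNormProp}, namely $\|\omega(\cdot,t)\|_{L^\infty}\leq 3\|\omega_1(\cdot,t)\|_{L^\infty}$, which yields
\begin{equation}
+\infty=\int_0^{T_{max}}\|\omega(\cdot,t)\|_{L^\infty}\diff t\leq 3\int_0^{T_{max}}\|\omega_1(\cdot,t)\|_{L^\infty}\diff t,
\end{equation}
forcing the integral of $\|\omega_1\|_{L^\infty}$ to diverge and completing the proof.

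There is no serious obstacle here; the entire argument is an application of results already established. The only point requiring a moment of care is the identification of the two maximal times in the second step, so that the hypothesis $T_{max}<+\infty$ really does trigger the classical criterion for the associated Euler solution. Once that identification is in place via the isometry of \Cref{VortNormProp}, the comparison $\|\omega\|_{L^\infty}\leq 3\|\omega_1\|_{L^\infty}$ immediately converts the vector-valued blowup criterion into the scalar one.
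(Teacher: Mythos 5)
Your argument is correct and matches the paper's proof, which cites exactly the same three ingredients: \Cref{CompReduceThm} to pass to the associated permutation symmetric Euler solution, the classical Beale--Kato--Majda criterion from \Cref{EulerExistThm}, and the comparison $\|\omega\|_{L^\infty}\leq 3\|\omega_1\|_{L^\infty}$ from \Cref{VortNormProp}. Your write-up simply fills in the details the paper leaves implicit, including the identification of the two maximal existence times via the norm isometry.
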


\begin{proof}
    This follows immediately from the standard Beale-Kato-Majda criterion, \Cref{VortNormProp}, and \Cref{CompReduceThm}.
\end{proof}

\section{The geometry of permutation symmetric flows}

In this section, we will consider the geometric structure of permutation symmetric flows. We will be particularly interested in the behaviour of the strain matrix at the origin, and using the Biot-Savart law to identify vorticity structures likely to produce singular behaviour.

\subsection{Vorticity and the strain at the origin}

We begin with the computation of the velocity gradient at the origin, recalling a structure result from \cite{MillerRestricted}.

\begin{proposition} \label{SigmaAxisGradientProp}
    Suppose $u\in H^s_{df}, s>\frac{5}{2}$ is permutation symmetric and $x_1=x_2=x_3$.
    Then for all $i\neq j$
    \begin{equation}
    \partial_1 u_2(x)=\partial_i u_j(x),
    \end{equation}
    and for all $1\leq i\leq 3$,
    \begin{equation}
    \partial_i u_i(x)=0.
    \end{equation}
    Note that this also implies that
    \begin{equation}
    \omega(x)=0.
    \end{equation}
\end{proposition}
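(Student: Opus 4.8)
The plan is to exploit the covariance of the velocity gradient under orthogonal changes of coordinates, specialized to the fact that points on the axis $x_1=x_2=x_3$ are fixed by every permutation. First I would invoke \Cref{GradProp}: for any $Q\in SO(3)$, $\nabla u^Q(x)=Q\,\nabla u\left(Q^{tr}x\right)\,Q^{tr}$. Taking $Q=P\in\mathcal{P}_3$ and using permutation symmetry $u^P=u$ (so that $\nabla u^P=\nabla u$), this becomes $\nabla u(x)=P\,\nabla u\left(P^{tr}x\right)\,P^{tr}$ for every permutation matrix $P$.

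The crucial observation is that when $x_1=x_2=x_3$, i.e.\ $x\in\spn(\sigma)$, every permutation fixes $x$, so $P^{tr}x=x$. Writing $A=\nabla u(x)$, the covariance relation therefore collapses to the invariance $A=PAP^{tr}$ for all $P\in\mathcal{P}_3$; equivalently, since $P^{tr}=P^{-1}$, the matrix $A$ commutes with every permutation matrix. Entrywise this reads $A_{ij}=A_{\pi(i)\pi(j)}$ for all $\pi\in S_3$. Because $S_3$ acts transitively on $\{1,2,3\}$ and on the ordered pairs of distinct indices, all diagonal entries of $A$ coincide and all off-diagonal entries coincide; in particular $\partial_i u_j(x)=\partial_1 u_2(x)$ for every $i\neq j$, which is the first claimed identity.

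To obtain the vanishing of the diagonal, I would use the divergence-free hypothesis: $\nabla\cdot u=\tr(\nabla u)=0$, so $\tr A=0$. Combined with the fact that all three diagonal entries are equal, this forces each of them to vanish, giving $\partial_i u_i(x)=0$. Finally, $A$ is symmetric, since its off-diagonal entries satisfy $A_{ij}=A_{ji}=\partial_1 u_2(x)$; as the vorticity is the curl of $u$ and hence captures the antisymmetric part of $\nabla u$, each component $\omega_k(x)=\partial_i u_j(x)-\partial_j u_i(x)$ (with $(i,j,k)$ cyclic) vanishes, so $\omega(x)=0$.

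The argument is essentially immediate once the right viewpoint is adopted, so I do not anticipate a serious obstacle; the only real subtlety---and the step that makes everything work---is recognizing that the general covariance $\nabla u(x)=P\,\nabla u\left(P^{tr}x\right)\,P^{tr}$ becomes a genuine pointwise symmetry constraint precisely on the fixed-point set $\spn(\sigma)$ of the permutation group, after which the conclusion follows from elementary linear algebra together with the trace-free condition.
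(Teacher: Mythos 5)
Your argument is correct: on the fixed-point axis of the permutation group, \Cref{GradProp} plus $u^P=u$ gives $\nabla u(x)=P\,\nabla u(x)\,P^{tr}$, transitivity of $S_3$ on indices and on ordered pairs of distinct indices equalizes the diagonal and off-diagonal entries respectively, and the trace-free condition kills the diagonal while the resulting symmetry of $\nabla u(x)$ kills the curl. The paper itself does not prove this proposition (it recalls it from \cite{MillerRestricted}), but your proof uses exactly the machinery the paper sets up for it and is evidently the intended argument.
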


Not only do we know that the gradient has this structure on the $\sigma$-axis. At the origin we have even more cancellation when we compute the full gradient in terms of vorticity using the Biot-Savart law.

\begin{theorem} \label{GradientOriginThm}
    Suppose $u\in H^s_{df}, s>\frac{5}{2}$ is permutation symmetric.
    Then the gradient at the origin is given by
    \begin{equation}
    \nabla u(0)
    =
    \lambda 
    \left( \begin{array}{ccc}
        0 & -1 & -1  \\
        -1 & 0 & -1  \\
        -1 & -1 & 0
    \end{array}\right),
    \end{equation}
    where
    \begin{align}
    \lambda
    &= \label{LambdaEqn1}
    \frac{3}{8\pi}\int_{\mathbb{R}^3}
    \left(\frac{(\sigma\cdot x)(x_2-x_3)}
    {|x|^5}\right)\omega_1(x) \diff x \\
    &= \label{LambdaEqn2}
    \frac{3}{8\pi}\int_{\mathbb{R}^3}
    \left(\frac{(\sigma\cdot x)(x_3-x_1)}
    {|x|^5}\right)\omega_2(x) \diff x \\
    &=  \label{LambdaEqn3}
    \frac{3}{8\pi}\int_{\mathbb{R}^3}
    \left(\frac{(\sigma\cdot x)(x_1-x_2)}
    {|x|^5}\right)\omega_3(x) \diff x.
    \end{align}
\end{theorem}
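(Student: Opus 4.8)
The plan is to compute $\nabla u(0)$ directly from the permutation-symmetric Biot-Savart law established in \Cref{BiotSavart}, and then to combine that concrete computation with the structural constraints coming from \Cref{SigmaAxisGradientProp} (applied at the origin, which lies on the axis $x_1=x_2=x_3$). \Cref{SigmaAxisGradientProp} already tells us that at the origin all diagonal entries $\partial_i u_i(0)$ vanish and that all off-diagonal entries $\partial_i u_j(0)$ with $i\neq j$ are equal to a single common value; call it $-\lambda$. Thus the matrix structure $\nabla u(0) = \lambda(J - I)$, where $J$ is the all-ones matrix, is essentially \emph{forced} by \Cref{SigmaAxisGradientProp} alone, and the only genuine content of the theorem is the integral formula for the scalar $\lambda$.

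To extract $\lambda$, I would differentiate the kernel representation $u(x)=\int_{\mathbb{R}^3} G(x,y)\,\omega_1(y)\,dy$ from \Cref{BiotSavart} and evaluate at $x=0$. It suffices to compute a single convenient off-diagonal entry, say $\partial_1 u_2(0)$, since $\partial_1 u_2(0)=-\lambda$ by the common-value structure. Differentiating $G(x,y)$ in $x_1$ and setting $x=0$ reduces each of the three kernel terms to an expression of the form $\partial_{x_1}\!\big(|x-z|^{-3}(\cdots)\big)\big|_{x=0}$, using $\partial_{x_k}|x-z|^{-3}=3(z_k-x_k)|x-z|^{-5}$, where $z$ runs over $y$, $P_{12}(y)$, $P_{13}(y)$ according to the term. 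Collecting the second-component entries produces an integral kernel against $\omega_1(y)$; this gives formula \eqref{LambdaEqn1} up to bookkeeping, after observing $|P(y)|=|y|$ for any permutation matrix so that all three denominators become $|y|^{-5}$ and the numerators recombine into the factor $(\sigma\cdot y)(y_2-y_3)$. The prefactor $\tfrac{3}{8\pi}$ comes from the $3$ in the derivative of $|x-y|^{-3}$ together with the $\tfrac{1}{4\pi}$ normalization absorbed into $G$ and the averaging of the two contributing kernel terms.

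Finally, the equivalent formulas \eqref{LambdaEqn2} and \eqref{LambdaEqn3} follow from the permutation symmetry of the configuration rather than from a fresh computation: since $\lambda$ is defined intrinsically through $\nabla u(0)$, and since by \Cref{VortPermSymThm} the components satisfy $\omega_2(x)=-\omega_1(P_{12}x)$ and $\omega_3(x)=-\omega_1(P_{13}x)$, one obtains \eqref{LambdaEqn2} and \eqref{LambdaEqn3} from \eqref{LambdaEqn1} by the change of variables $x\mapsto P_{12}x$ and $x\mapsto P_{13}x$ respectively, using that these permutations preserve both $|x|$ and the Lebesgue measure while cyclically permuting the roles of the coordinate differences. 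I expect the main obstacle to be the careful bookkeeping in the second step: correctly tracking which component of $G$ contributes to $\partial_1 u_2(0)$ across all three kernel terms, keeping the signs straight, and verifying that the spurious-looking cross terms either cancel or recombine cleanly into $(\sigma\cdot y)(y_2-y_3)$. The symmetry argument of the third step is conceptually easy but should be stated precisely to confirm that the three integrals genuinely agree rather than merely resembling one another.
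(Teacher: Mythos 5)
Your overall strategy coincides with the paper's: use \Cref{SigmaAxisGradientProp} to reduce the theorem to computing one off-diagonal entry of $\nabla u(0)$, differentiate the permutation-symmetric Biot--Savart kernel from \Cref{BiotSavart}, and obtain \eqref{LambdaEqn2}, \eqref{LambdaEqn3} from \eqref{LambdaEqn1} by the changes of variables $P_{12}$, $P_{13}$. That last step is carried out exactly as you describe. However, there is a genuine gap in the central computation. Differentiating the kernel and setting $x=0$ does \emph{not} produce the factor $(\sigma\cdot y)(y_2-y_3)/|y|^5$ by algebraic recombination alone: for $\partial_2 u_1(0)$ the direct computation yields the kernel
\begin{equation*}
\frac{|y|^2-3y_2^2+3y_1y_3}{|y|^5},
\end{equation*}
which is not a scalar multiple of $(\sigma\cdot y)(y_2-y_3)/|y|^5$ pointwise (it even contains the term $|y|^{-3}$, which has no counterpart in the target). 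The missing idea is a symmetrization step that uses the constraint $\omega_1(y)=-\omega_1(P_{23}y)$ (a consequence of permutation symmetry via \Cref{VortPermSymThm}): substituting $y\mapsto P_{23}y$ gives a second, distinct expression for the same integral, and only the \emph{average} of the two kernels factors as $\tfrac{3}{2}\,(y_1+y_2+y_3)(y_3-y_2)/|y|^5$, since $y_3^2-y_2^2+y_1y_3-y_1y_2=(\sigma\cdot y)(y_3-y_2)$. Your plan, as stated, would stall at the unsymmetrized kernel, because the ``spurious-looking cross terms'' do not cancel; they only integrate to zero against functions odd under $P_{23}$.

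A secondary omission: the kernel is homogeneous of degree $-3$, so the integral in \eqref{LambdaEqn1} is not absolutely convergent near the origin for a generic $H^s$ integrand, and the differentiation under the integral sign must initially be interpreted as a principal value. The paper removes the principal value by noting that permutation symmetry forces $\omega(0)=0$, so that Sobolev embedding gives $|\omega_1(y)|\leq C|y|^{\alpha}$ with $\alpha=s-\tfrac{5}{2}>0$, which restores absolute convergence. You should include both the symmetrization and this convergence argument to make the proof complete.
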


\begin{proof}
    Applying \Cref{SigmaAxisGradientProp}, we can see that the result holds with 
    \begin{equation}
    \lambda
    =
    -\partial_2 u_1(0),
    \end{equation}
    so it is simply a question of computing this partial derivative.
    First, recall from the permutation symmetric Biot-Savart law in \Cref{BiotSavart}, that
    \begin{equation}
    u_1(x)
    =
    \frac{1}{4\pi}
    \int_{\mathbb{R}^3}
    \left(\frac{x_2-y_2}{|x-P_{13}(y)|^3}-\frac{x_3-y_3}{|x-P_{12}(y)|^3}\right)
    \omega_1(y) \diff y.
    \end{equation}
    Differentiating, and using the principle value for the singular integral operator, we find that
    \begin{equation}
    \partial_2 u_1(x)
    =
    \frac{1}{4\pi} P.V.
    \int_{\mathbb{R}^3}
    \left(\frac{1}{|x-P_{13}(y)|^3}
    -3 \frac{(x_2-y_2)^2}{|x-P_{13}(y)|^5}
    +3\frac{(x_3-y_3)(x_2-y_1)}
    {|x-P_{12}(y)|^5}\right)
    \omega_1(y) \diff y,
    \end{equation}
    where in this case the principal value of the singular integral is defined by
    \begin{equation}
    P.V. \int_{\mathbb{R}^3}
    H(x,y)\omega_1(y)\diff y
    =
    \lim_{\epsilon\to 0}
    \int_{\substack{|x-P_{12}(y)|>\epsilon
    \\ |x-P_{13}(y)|>\epsilon}}
    H(x,y)\omega_1(y)\diff y,
    \end{equation}
    and $H$ is given by the singular kernel above.
    Plugging in $x=0$ and observing that permutations don't affect the magnitude of a vector, we can see that
    \begin{equation}
    \partial_2 u_1(0)
    = \label{Step1}
    \frac{1}{4\pi} P.V.
    \int_{\mathbb{R}^3}
    \left(
    \frac{|y|^2-3y_2^2+3 y_1y_3}{|y|^5}
    \right)
    \omega_1(y)\diff y,
    \end{equation}
    where the principal value is now 
    simplifies to
    \begin{equation}
    P.V.
    \int_{\mathbb{R}^3}
    \left(
    \frac{|y|^2-3y_2^2+3 y_1y_3}{|y|^5}
    \right)
    \omega_1(y) \diff y
    =
    \lim_{\epsilon\to 0}
    \int_{|y|>\epsilon}
    \left(
    \frac{|y|^2-3y_2^2+3 y_1y_3}{|y|^5}
    \right)
    \omega_1(y)\diff y
    \end{equation}
    Using the fact that $\omega_1(z)=
    -\omega_1(P_{23}(z))$, and letting $y=P_{23}(z)$, we can see that
    \begin{align}
    \partial_2 u_1(0)
    &=
    -\frac{1}{4\pi} P.V.
    \int_{\mathbb{R}^3}
    \left(
    \frac{|z|^2-3z_2^2+3z_1z_3}{|z|^5}
    \right)
    \omega_1(P_{23}(z))\diff z \\
    &= \label{Step2}
    \frac{1}{4\pi} P.V.
    \int_{\mathbb{R}^3}
    \left(
    \frac{-|y|^2+3y_3^2-3 y_1y_2}{|y|^5}
    \right)
    \omega_1(y)\diff y.
    \end{align}
    Taking one half the sum of \cref{Step1,Step2}, we find that
    \begin{align}
    \partial_2 u_1(0)
    &=
    \frac{3}{8\pi} P.V.
    \int_{\mathbb{R}^3}
    \left(\frac{y_3^2-y_2^2+y_1y_3-y_1y_2}
    {|y|^5}\right) \omega_1(y) \diff y \\
    &=
    \frac{3}{8\pi} P.V.
    \int_{\mathbb{R}^3}
    \left(\frac{(y_1+y_2+y_3)(y_3-y_2)}
    {|y|^5}\right) \omega_1(y) \diff y,
    \end{align}
    and so we can see that
    \begin{equation}
    \lambda
    =
    \frac{3}{8\pi} P.V.
    \int_{\mathbb{R}^3}
    \left(\frac{(\sigma\cdot y)(y_2-y_3)}
    {|y|^5}\right)\omega_1(y) \diff y.
    \end{equation}

    We will now remove the principal value from this estimate, as the integral is convergent once geometric considerations are accounted for.
    Recall that $\omega(0)=0$ because of permutation symmetry, and that by Sobolev embedding $\omega\in C^\alpha$,
    where $\alpha=s-\frac{5}{2}$. 
    This implies that for all $x\in\mathbb{R}^3$,
    \begin{equation}
    |\omega(x)|\leq C |x|^\alpha,
    \end{equation}
    where $C$ does not depend on $x$.
    This implies that
    \begin{align}
    \left|\int_{|y|\leq \epsilon}
     \left(\frac{(\sigma\cdot y)(y_2-y_3)}
    {|y|^5}\right)\omega_1(y) \diff y \right|
    &\leq 
    \sqrt{6}\int_{|y|\leq \epsilon} 
    \frac{|\omega_1(y)|}{|y|^3}\\
    &\leq 
    \sqrt{6}C 
    \int_{|y|\leq \epsilon} 
    |y|^{\alpha-3} \diff y \\
    &=
    4\sqrt{6}C\pi 
    \int_0^\epsilon r^{\alpha-1} \diff r \\
    &=
    \frac{4\sqrt{6}C\pi}{\alpha}\epsilon^\alpha,
    \end{align}
    where we have use the fact that $\diff y =4\pi r^2 \diff r$ in the presence of spherical symmetry.
    This clearly implies that
    \begin{equation}
    \lim_{\epsilon\to 0}
    \int_{|y|\leq \epsilon}
     \left(\frac{(\sigma\cdot y)(y_2-y_3)}
    {|y|^5}\right)\omega_1(y) \diff y
    =0,
    \end{equation}
    and that therefore
    \begin{equation}
     P.V.
    \int_{\mathbb{R}^3}
    \left(\frac{(\sigma\cdot y)(y_2-y_3)}
    {|y|^5}\right)\omega_1(y) \diff y
    =
    \int_{\mathbb{R}^3}
    \left(\frac{(\sigma\cdot y)(y_2-y_3)}
    {|y|^5}\right)\omega_1(y) \diff y,
    \end{equation}
    where the later integral is convergent.
    We can then drop the principal value and conclude that
    \begin{equation}
    \lambda
    =
    \frac{3}{8\pi}
    \int_{\mathbb{R}^3}
    \left(\frac{(\sigma\cdot y)(y_2-y_3)}
    {|y|^5}\right)\omega_1(y) \diff y.
    \end{equation}

    Again taking the change of variables $x=P_{12}(y)$
    and applying the associated permutation symmetry from \Cref{SwapPermuteCor}, we find that
    \begin{align}
    \lambda
    &=
    -\frac{3}{8\pi}\int_{\mathbb{R}^3}
    \frac{(\sigma\cdot y)(y_2-y_3)}{|y|^5}
    \omega_2(P_{12}(y)) \diff y \\
    &=
    \frac{3}{8\pi}\int_{\mathbb{R}^3}
    \frac{(\sigma\cdot x)(-x_1+x_3)}{|x|^5}
    \omega_2(x) \diff x.
    \end{align}
    The computation for \cref{LambdaEqn3}, the formula involving the third vorticity component, follows by the same methods using the change of variables 
    $x=P_{13}(y)$.
    This completes the proof.
\end{proof}

\begin{corollary} \label{GradCor}
    Suppose $u\in H^s_{df}, s>\frac{5}{2}$ is permutation symmetric.
    Then the gradient at the origin is given by
    \begin{equation}
    \nabla u(0)
    =
    \lambda 
    \left( \begin{array}{ccc}
        0 & -1 & -1  \\
        -1 & 0 & -1  \\
        -1 & -1 & 0
    \end{array}\right),
    \end{equation}
    where
    \begin{equation}
    \lambda=
    -\frac{1}{8\pi}\int_{\mathbb{R}^3}
    \left(\frac{\sigma\cdot x}
    {|x|^5}\right)(\sigma\times x)\cdot \omega(x) \diff x.
    \end{equation}
    Note this can be expressed as
    \begin{equation}
    \lambda=
    \frac{1}{8\pi}\int_{\mathbb{R}^3}
    \left(\frac{\sigma\cdot x}
    {|x|^5}\right)
    \left(\begin{array}{c}
     x_2-x_3 \\ x_3-x_1 \\ x_1-x_2 
    \end{array}\right)
    \cdot \omega(x) \diff x.
    \end{equation}
\end{corollary}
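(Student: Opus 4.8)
The plan is to derive this corollary directly from \Cref{GradientOriginThm}, which has already done all of the analytic work: it establishes the matrix structure of $\nabla u(0)$ and, crucially, provides three equivalent scalar formulas \eqref{LambdaEqn1}, \eqref{LambdaEqn2}, \eqref{LambdaEqn3} for $\lambda$, one for each vorticity component. Since the matrix itself is stated identically in both results, the only remaining task is to repackage these three scalar identities into a single identity involving the full vorticity vector $\omega$.

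First I would take the arithmetic mean of the three expressions for $\lambda$, that is, one third of the sum of \eqref{LambdaEqn1}, \eqref{LambdaEqn2}, and \eqref{LambdaEqn3}. Because each of the three integrals equals $\lambda$, their average equals $\lambda$ as well, and the common prefactor $\frac{3}{8\pi}$ combines with the $\frac{1}{3}$ to produce $\frac{1}{8\pi}$. After factoring out the common weight $\frac{\sigma\cdot x}{|x|^5}$ shared by all three integrands, the vorticity-dependent parts assemble into
\[
(x_2-x_3)\omega_1(x)+(x_3-x_1)\omega_2(x)+(x_1-x_2)\omega_3(x),
\]
which is exactly the dot product of $\omega(x)$ with the vector $(x_2-x_3,\,x_3-x_1,\,x_1-x_2)^{tr}$. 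This yields the second displayed formula in the statement.

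Next I would identify this vector field geometrically in order to recover the first displayed form. A direct computation of the cross product gives $\sigma\times x=(x_3-x_2,\,x_1-x_3,\,x_2-x_1)^{tr}$, so that $(x_2-x_3,\,x_3-x_1,\,x_1-x_2)^{tr}=-(\sigma\times x)$. Substituting this relation converts the expression into $\lambda=-\frac{1}{8\pi}\int_{\mathbb{R}^3}\frac{\sigma\cdot x}{|x|^5}(\sigma\times x)\cdot\omega(x)\diff x$ and simultaneously confirms that the two stated forms agree.

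There is essentially no obstacle here. The convergence of the integral and the removal of the principal value were already handled inside the proof of \Cref{GradientOriginThm}, using the bound $|\omega(x)|\leq C|x|^\alpha$ coming from $\omega(0)=0$ and Sobolev embedding; consequently the averaging and the rewriting above are purely formal manipulations of absolutely convergent integrals. The one point that warrants a careful check is the sign and component bookkeeping in the cross-product identity, which is the step I would verify explicitly to ensure the final sign in $-\frac{1}{8\pi}\int\frac{\sigma\cdot x}{|x|^5}(\sigma\times x)\cdot\omega\diff x$ is correct.
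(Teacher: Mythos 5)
Your proposal is correct and follows exactly the paper's own argument: the paper likewise proves the corollary by taking one third of the sum of \eqref{LambdaEqn1}, \eqref{LambdaEqn2}, and \eqref{LambdaEqn3} from \Cref{GradientOriginThm}, and your cross-product identity $(x_2-x_3,\,x_3-x_1,\,x_1-x_2)^{tr}=-(\sigma\times x)$ is the right sign check. Nothing is missing.
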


\begin{proof}
    Taking one third the sum of \cref{LambdaEqn1,LambdaEqn2,LambdaEqn3},
    the result follows immediately from \Cref{GradientOriginThm}.
\end{proof}

\begin{remark}
    Based on this identity, it might seem like a natural candidate for finite-time blowup would be to take initial data of the form
    \begin{equation}
    \omega^0(x)=
    -f\left(|x|^2\right)
    (\sigma\cdot x)
    (\sigma \times x),
    \end{equation}
    for some positive function $f$.
    Unfortunately, permutation-symmetric vector fields of this form cannot lead to finite-time blowup for smooth solutions of the Euler equation.
    This is because, as we will see in the next section, such vector fields are in the class of axisymmetric, swirl-free flows, but with the $\sigma$-axis as the axis of symmetry, rather than the $x_3$ axis.
\end{remark}

We should note that the vorticity in the $\sigma$-direction does not contribute at all to the gradient at the origin because $(\sigma\times x)\cdot \sigma=0$.
We can therefore express the gradient at the origin in terms of the part of the vorticity that is orthogonal to $\sigma$.

\begin{corollary} \label{GradCorPerp}
    Suppose $u\in H^s_{df}, s>\frac{5}{2}$ is permutation symmetric.
    Then the gradient at the origin is given by
    \begin{equation}
    \nabla u(0)
    =
    \lambda 
    \left( \begin{array}{ccc}
        0 & -1 & -1  \\
        -1 & 0 & -1  \\
        -1 & -1 & 0
    \end{array}\right),
    \end{equation}
    with
    \begin{equation}
    \lambda=
    -\frac{1}{8\pi}\int_{\mathbb{R}^3}
    \left(\frac{\sigma\cdot x}
    {|x|^5}\right)(\sigma\times x)\cdot \omega^\perp(x) \diff x,
    \end{equation}
    where
    \begin{equation}
    \omega^\perp
    =
    \omega
    -\frac{1}{3}(\omega\cdot\sigma)\sigma.
    \end{equation}
    Note that
    \begin{equation}
    \sigma\cdot\omega^\perp=0.
    \end{equation}
\end{corollary}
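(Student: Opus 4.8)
The plan is to deduce this directly from the formula for $\lambda$ already established in \Cref{GradCor}, namely
\begin{equation}
\lambda=
-\frac{1}{8\pi}\int_{\mathbb{R}^3}
\left(\frac{\sigma\cdot x}{|x|^5}\right)(\sigma\times x)\cdot \omega(x)\diff x,
\end{equation}
by observing that the portion of $\omega$ parallel to $\sigma$ is annihilated by the integrand. The key elementary fact is that $\sigma\times x$ is orthogonal to $\sigma$ for every $x$, so that $(\sigma\times x)\cdot\sigma=0$ pointwise.

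First I would write the orthogonal decomposition $\omega=\omega^\perp+\tfrac{1}{3}(\omega\cdot\sigma)\sigma$, where the coefficient $\tfrac{1}{3}$ arises because $|\sigma|^2=3$. Substituting this into the dot product appearing in the integrand gives
\begin{equation}
(\sigma\times x)\cdot\omega
=
(\sigma\times x)\cdot\omega^\perp
+\frac{1}{3}(\omega\cdot\sigma)\,(\sigma\times x)\cdot\sigma
=
(\sigma\times x)\cdot\omega^\perp,
\end{equation}
since the second term vanishes identically. Plugging this identity back into the formula from \Cref{GradCor} immediately yields the claimed expression for $\lambda$ in terms of $\omega^\perp$, with no change to the matrix structure of $\nabla u(0)$.

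The final line $\sigma\cdot\omega^\perp=0$ is then a one-line verification: $\sigma\cdot\omega^\perp=\sigma\cdot\omega-\tfrac{1}{3}(\omega\cdot\sigma)|\sigma|^2=\sigma\cdot\omega-(\omega\cdot\sigma)=0$, again using $|\sigma|^2=3$. There is no real obstacle here; the statement is a corollary whose entire content is the orthogonality relation $(\sigma\times x)\cdot\sigma=0$, which was already flagged in the remark preceding it. The only point requiring minor care is keeping track of the normalization factor $\tfrac{1}{3}$ coming from $|\sigma|^2=3$ rather than treating $\sigma$ as a unit vector.
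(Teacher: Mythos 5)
Your proposal is correct and follows exactly the paper's own argument: the paper also deduces the result directly from \Cref{GradCor} together with the identity $\sigma\cdot(\sigma\times x)=0$, which kills the component of $\omega$ parallel to $\sigma$ in the integrand. Your additional care with the factor $\tfrac{1}{3}$ from $|\sigma|^2=3$ is a correct and slightly more explicit version of the same one-line verification.
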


\begin{proof}
    The result follows immediately from \Cref{GradCor} and the fact that
    $\sigma\cdot (\sigma\times x)=0$.
\end{proof}

\subsection{Generalized axisymmetric, swirl-free flows} \label{PermuteAxisymSubsection}

Axisymmetric, swirl-free flows are generally considered with an axisymmetry about the 
$x_3$-axis. In this subsection we will consider axisymmetric flows about general axes, and we will show that axisymmetric, swirl-free flows with the $\sigma$-axis serving as the axis of symmetry are permutation symmetric. This will allow us to prove that there are permutation-symmetric $C^{1,\alpha}$ solutions of the Euler equation that blowup in finite-time by taking a rotation of the blowup solution introduced in \cites{Elgindi,ElgindiGhoulMasmoudi}.

\begin{definition} \label{AxisymDef}
    Fix a unit vector $v\in\mathbb{R}^3, |v|=1$.
    We will say that a vector field $u\in H^s\left(\mathbb{R}^3;
    \mathbb{R}^3\right), 
    s>\frac{5}{2}$,
    is $v$-axisymmetric, swirl-free 
    if
    \begin{equation}
    u(x)=u_r(r,z)e_r+ u_z(r,z)e_z,
    \end{equation}
    where
    \begin{align}
    z&=x\cdot v \\
    x'&=x-(x\cdot v)v \\
    r&=|x'| \\
    e_r &= \frac{x'}{|x'|} \\
    e_z &= v.
    \end{align}
    Note that the classical definition of an axisymmetric, swirl-free vector field takes the $z$-axis to be the axis of symmetry, corresponding in our definition to $e_3$-axisymmetric, swirl-free.
\end{definition}

\begin{proposition} \label{AxisymDivProp}
    For all $u\in H^s\left(\mathbb{R}^3;
    \mathbb{R}^3\right), s>\frac{5}{2}$,
    $v$-axisymmetric, swirl-free,
    \begin{equation}
    \nabla\cdot u=
    \partial_r u_r+ \partial_z u_z
    +\frac{1}{r}u_r.
    \end{equation}
\end{proposition}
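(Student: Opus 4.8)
The plan is to compute the divergence directly in Cartesian coordinates, treating $r$ and $z$ as scalar functions of $x$ and splitting $u$ into its radial and axial parts. Write $u = u_r(r,z)\,e_r + u_z(r,z)\,v$, where $z = x\cdot v$, $x' = x - (x\cdot v)v$, $r = |x'|$, and $e_r = x'/r$. The elementary identities I would establish first are the gradients $\nabla z = v$ and $\nabla r = e_r$, the orthogonality relations $v\cdot e_r = 0$ and $e_r\cdot x' = r$, and the divergence $\nabla\cdot x' = \nabla\cdot x - \nabla\cdot\big((x\cdot v)v\big) = 3 - 1 = 2$, where the second term uses $\nabla\cdot\big((x\cdot v)v\big) = v\cdot\nabla(x\cdot v) = |v|^2 = 1$.

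For the axial part, since $u_z$ depends on $x$ only through $r$ and $z$, the chain rule gives $\nabla u_z = (\partial_r u_z)e_r + (\partial_z u_z)v$, so that $\nabla\cdot(u_z v) = v\cdot\nabla u_z = \partial_z u_z$, using $v\cdot e_r = 0$ and $|v| = 1$. For the radial part, the key manoeuvre is to write $u_r e_r = g\,x'$ with $g = u_r/r$, so that $\nabla\cdot(u_r e_r) = g\,(\nabla\cdot x') + \nabla g\cdot x' = 2g + r\,\partial_r g$, where I use $\nabla\cdot x' = 2$ and $\nabla g\cdot x' = r\,\partial_r g$ (the $\partial_z g$ contribution drops because $v\cdot x' = 0$). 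Expanding $\partial_r g = \partial_r u_r/r - u_r/r^2$ then collapses this expression to $\partial_r u_r + \frac{1}{r}u_r$, and adding the two contributions yields the claimed formula.

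The main bookkeeping point, rather than a genuine obstacle, is the origin of the $\frac{1}{r}u_r$ term: it arises precisely because the frame vector $e_r$ is not constant. The substitution $u_r e_r = g\,x'$ packages this non-constancy into the divergence $\nabla\cdot x' = 2$ (rather than the flat $\nabla\cdot x = 3$), and it is the subtraction of $(x\cdot v)v$ in the definition of $x'$ that produces the $2$, hence the surviving $\frac{1}{r}u_r$. An equally viable alternative would be a reduction to the classical axis: choosing $Q\in SO(3)$ with $Qv = e_3$, one checks using \Cref{GradProp} that $u^Q$ is $e_3$-axisymmetric, swirl-free with the identical profiles $u_r, u_z$, while $\tr\big(\nabla u^Q(x)\big) = \tr\big(\nabla u(Q^{tr}x)\big)$ makes the divergence rotation-covariant, so the formula follows from the standard cylindrical-coordinate divergence identity.
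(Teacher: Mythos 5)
Your proposal is correct and follows essentially the same route as the paper: a direct Cartesian computation using $\nabla r = e_r$, $\nabla z = v$, and $\nabla\cdot x' = 2$, with the product and chain rules. The only cosmetic difference is that you absorb the non-constancy of the frame by writing $u_r e_r = (u_r/r)\,x'$, whereas the paper computes $\nabla\cdot e_r = \nabla\cdot(x'/r) = \frac{1}{r}$ directly; these are the same calculation regrouped.
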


\begin{proof}
    First we observe that
    \begin{equation}
    \nabla r= e_r,
    \end{equation}
    and that
    \begin{align}
    \nabla \cdot e_r
    &=
    \frac{\nabla\cdot x'}{r}
    -x'\cdot\nabla
    \frac{1}{r}\\
    &=
    \frac{2}{r}-r\frac{1}{r^2} \\
    &=
    \frac{1}{r}.
    \end{align}
    Applying the chain rule we find that
    \begin{align}
    \nabla \cdot u
    &=
    e_r\cdot \nabla u_r
    +e_z\cdot\nabla u_z
    +u_r \nabla\cdot e_r \\
    &=
    \partial_r u_r+ \partial_z u_z
    +\frac{1}{r}u_r,
    \end{align}
    and this completes the proof.
\end{proof}

\begin{proposition} \label{AxisymCurlPropA}
    Suppose $u\in H^s\left(\mathbb{R}^3;
    \mathbb{R}^3\right), s>\frac{5}{2}$ is $v$-axisymmetric, swirl-free.
    Then
    \begin{align}
    \omega(x)
    &=
    (\partial_z u_r-\partial_r u_z)(r,z) 
    v\times e_r \\
    &=
    \frac{\partial_z u_r-\partial_r u_z}
    {r}(r,z) v\times x
    \end{align}
\end{proposition}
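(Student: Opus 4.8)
The plan is to compute $\omega=\nabla\times u$ directly via the product rule for the curl, mirroring the style of the divergence computation in \Cref{AxisymDivProp}. Writing $u=u_r e_r+u_z e_z$ with $e_z=v$, I would split $\omega=\nabla\times(u_r e_r)+\nabla\times(u_z v)$ and apply the identity $\nabla\times(f\mathbf{A})=\nabla f\times\mathbf{A}+f\,\nabla\times\mathbf{A}$ to each piece. The essential inputs are the gradients $\nabla r=e_r$ and $\nabla z=v$, already used in the divergence proof, so that by the chain rule $\nabla u_r=\partial_r u_r\,e_r+\partial_z u_r\,v$ and $\nabla u_z=\partial_r u_z\,e_r+\partial_z u_z\,v$.

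Crossing these gradients with $e_r$ and $v$ respectively and using $e_r\times e_r=v\times v=0$ together with the anticommutativity $e_r\times v=-(v\times e_r)$, the leading terms collapse nicely: $\nabla u_r\times e_r=\partial_z u_r\,(v\times e_r)$ and $\nabla u_z\times v=-\partial_r u_z\,(v\times e_r)$. Thus the combination of gradient terms already produces the expected coefficient $\partial_z u_r-\partial_r u_z$ multiplying $v\times e_r$.

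The one step requiring genuine work is showing the remaining curl terms $u_r\,\nabla\times e_r$ and $u_z\,\nabla\times v$ vanish, and this is where I expect the main obstacle to sit. Since $v$ is constant, $\nabla\times v=0$ immediately. For $\nabla\times e_r$, I would write $e_r=x'/r$ with $x'=x-(x\cdot v)v$ and apply the product rule once more, $\nabla\times e_r=\nabla(1/r)\times x'+\tfrac1r\,\nabla\times x'$. The first term vanishes because $\nabla(1/r)=-r^{-3}x'$ is parallel to $x'$, and the second vanishes because $\nabla\times x'=\nabla\times x-\nabla\times\big((x\cdot v)v\big)=0-(v\times v)=0$, using that $\nabla(x\cdot v)=v$. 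This short computation closes out the first identity $\omega=(\partial_z u_r-\partial_r u_z)(v\times e_r)$.

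Finally, for the second identity I would observe that $v\times e_r=\tfrac1r\,v\times x'=\tfrac1r\,v\times x$, since $v\times x'=v\times x-(x\cdot v)(v\times v)=v\times x$, which yields $\omega=\frac{\partial_z u_r-\partial_r u_z}{r}(v\times x)$. A more structural alternative would be to verify the classical identity for the $e_3$-axis and transport it to a general axis $v=Qe_3$ through the curl covariance of \Cref{VortThmSO}, but the direct computation is self-contained and matches the paper's treatment of the divergence in \Cref{AxisymDivProp}.
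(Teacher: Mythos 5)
Your proposal is correct and follows essentially the same route as the paper: both apply the product rule for the curl to $u=u_re_r+u_ze_z$, use $\nabla r=e_r$, $\nabla z=v$, and the irrotationality of $e_r$ (which the paper simply asserts and you verify explicitly), and then convert $v\times e_r$ to $\frac{1}{r}v\times x$. The only difference is that you fill in the short computations the paper leaves to the reader.
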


\begin{proof}
    It is straightforward to compute that $e_r$ is irrotational, with
    \begin{equation}
    \nabla\times e_r=0.
    \end{equation}
    Therefore, we may conclude that
    \begin{align}
    \omega
    &=
    \partial_z u_r e_z\times e_r
    +\partial_r u_z e_r\times e_z \\
    &=
    \left(\partial_z u_r-
    \partial_r u_z\right) 
    e_z\times e_r.
    \end{align}
    Recall that $v=e_z$ and that
    $x=re_r+ze_z$, and so
    \begin{equation}
    v\times x= r e_z\times e_r,
    \end{equation}
    and this completes the proof.
\end{proof}

\begin{lemma} \label{AxisymMirrorSymLemma}
    Suppose $u\in C\left(\mathbb{R}^3;\mathbb{R}^3\right)$, 
    is $v$-axisymmetric, swirl-free,
    \begin{equation}
    u(x)=u_r(r,z)e_r+u_z(r,z)e_z.
    \end{equation}
    Then $u$ is $v$-mirror symmetric if and only if $u_r$ is even in $z$ and $u_z$ is odd in $z$.
\end{lemma}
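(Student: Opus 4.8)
The plan is to unwind the definition $u^{M_v}(x) = M_v u(M_v^{tr} x)$ by computing how the reflection $M_v = I_3 - 2 v\otimes v$ acts both on the point $x$ and on the cylindrical frame $\{e_r, e_z\}$ attached to $x$. First I would record the elementary facts that $M_v$ is symmetric with $M_v^{tr} = M_v$ and $M_v^2 = I_3$, and that
\begin{equation}
M_v x = x - 2(x\cdot v) v = x' - z v,
\end{equation}
so that applying $M_v$ fixes the planar part $x'$ (hence leaves $r$ and $e_r$ unchanged) while flipping the sign of $z = x\cdot v$.

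Next I would compute the action of $M_v$ on the frame vectors themselves. Since $e_r = x'/|x'|$ is orthogonal to $v$, we have $M_v e_r = e_r$, while $M_v e_z = M_v v = -v = -e_z$. Substituting the swirl-free decomposition $u(y) = u_r(r_y, z_y) e_r^y + u_z(r_y, z_y) e_z$ at the reflected point $y = M_v x$, for which $r_y = r$, $z_y = -z$, and $e_r^y = e_r$, I obtain
\begin{equation}
u^{M_v}(x) = M_v u(M_v x) = u_r(r, -z) e_r - u_z(r, -z) e_z.
\end{equation}

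The symmetry condition $u^{M_v} = u$ then reads $u_r(r,-z) e_r - u_z(r,-z) e_z = u_r(r,z) e_r + u_z(r,z) e_z$. Because $e_r$ and $e_z$ are orthonormal, hence linearly independent, for every $x$ with $r > 0$, I may equate coefficients to get $u_r(r,-z) = u_r(r,z)$ and $u_z(r,-z) = -u_z(r,z)$; that is, $u_r$ is even in $z$ and $u_z$ is odd in $z$. The reverse implication follows by reading the same coefficient computation backwards. The only point requiring care is the axis $r = 0$, where $e_r$ is undefined; there the equivalence is recovered by continuity of $u$, since the conditions on $u_r$ and $u_z$ already hold on the dense set $\{r > 0\}$. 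I do not anticipate a serious obstacle: the entire content is the bookkeeping of how $M_v$ interacts with the moving frame, and the main thing to get right is that $e_r$ is preserved while $e_z$ is negated, which is exactly what produces the even/odd dichotomy.
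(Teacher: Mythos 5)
Your computation is correct and is precisely the ``straightforward computation'' that the paper leaves to the reader: reflecting $x$ fixes $x'$ (hence $r$ and $e_r$) and negates $z$, while $M_v$ fixes $e_r$ and negates $e_z$, which yields the even/odd dichotomy upon equating coefficients in the orthonormal frame. Your handling of the axis $r=0$ by continuity is an appropriate finishing touch.
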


\begin{proof}
    This is a straightforward computation left to the reader.
\end{proof}

\begin{proposition} \label{OrthoEquivProp}
    Suppose $Q\in O(3)$ has columns
    \begin{equation}
    Q=[w,\Tilde{w},v]
    \end{equation}
    satisfying
    \begin{equation}
    w\times \Tilde{w}=v.
    \end{equation}
    Then $u\in C\left(\mathbb{R}^3;\mathbb{R}^3\right)$, 
    is $v$-axisymmetric, swirl-free if and only if $u^{Q^{tr}}$ is 
    $e_3$-axisymmetric, swirl-free.

    Furthermore, $u$ is $v$-axisymmetric, swirl-free and $v$-mirror symmetric if and only if $u^{Q^{tr}}$ is $e_3$-axisymmetric, swirl-free and $e_3$-mirror symmetric.
\end{proposition}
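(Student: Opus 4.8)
The plan is to show that conjugating by $Q^{tr}$ carries the $v$-cylindrical frame attached to $u$ onto the standard $e_3$-cylindrical frame, so that the scalar profiles $u_r,u_z$ of a $v$-axisymmetric field are transported unchanged to $u^{Q^{tr}}$. First I would record that the hypothesis $w\times\tilde{w}=v$ is precisely the statement $\det(Q)=(w\times\tilde{w})\cdot v=1$, so that $Q\in SO(3)$ and the machinery of \Cref{OrthoComposeProp} applies; in particular $Q^{tr}v=Q^{tr}Qe_3=e_3$. To disambiguate I write $e_r^{(v)}$ and $e_r^{(\mathrm{std})}$ for the radial unit vectors of the $v$- and $e_3$-cylindrical systems.

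The core of the argument is a coordinate correspondence. Writing $\tilde{u}=u^{Q^{tr}}$, we have $\tilde{u}(x)=Q^{tr}u(Qx)$. Setting $y=Qx$, I would check that the $v$-cylindrical data of $y$ match the standard $e_3$-cylindrical data of $x$: since $Q^{tr}y=x$, the axial coordinate is $z_y=y\cdot v=x\cdot(Q^{tr}v)=x_3$, and $r_y^2=|y|^2-z_y^2=|x|^2-x_3^2=x_1^2+x_2^2$ is the standard cylindrical radius. Moreover $Q^{tr}(y-(y\cdot v)v)=x-x_3 e_3=(x_1,x_2,0)$, so $Q^{tr}e_r^{(v)}(y)=e_r^{(\mathrm{std})}(x)$, while $Q^{tr}v=e_3$. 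Substituting the $v$-axisymmetric form $u(y)=u_r(r_y,z_y)e_r^{(v)}(y)+u_z(r_y,z_y)v$ then yields $\tilde{u}(x)=u_r(r,x_3)e_r^{(\mathrm{std})}(x)+u_z(r,x_3)e_3$, which is manifestly $e_3$-axisymmetric, swirl-free with the same profile functions. This gives the forward implication; the converse is the same computation run with $Q$ in place of $Q^{tr}$ (which sends $e_3\mapsto v$ and the standard frame to the $v$-frame), or equivalently follows from $(u^{Q^{tr}})^{Q}=u$ via \Cref{OrthoComposeProp}, so the whole statement is an equivalence.

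For the mirror-symmetric refinement I would invoke \Cref{AxisymMirrorSymLemma}. The transport computation above shows that $\tilde{u}$ has the same scalar profiles $u_r,u_z$ as $u$, evaluated at the same axial coordinate, since the $v$-axial variable $z=x\cdot v$ of the argument of $u$ corresponds to the standard axial variable $x_3$ of the argument of $\tilde{u}$. By \Cref{AxisymMirrorSymLemma}, $u$ is $v$-mirror symmetric if and only if $u_r$ is even and $u_z$ is odd in the axial variable, and likewise $\tilde{u}$ is $e_3$-mirror symmetric if and only if its profiles have those same parities; since the profiles and axial variables coincide, these conditions are identical, giving the equivalence and hence the conjunction. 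Equivalently, one may argue directly through the conjugation identity $M_v=Q M_{e_3}Q^{tr}$ (valid since $Qe_3=v$) together with the composition rule of \Cref{OrthoComposeProp}, which holds verbatim for reflections because its proof uses only $(AB)^{tr}=B^{tr}A^{tr}$.

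The only genuine obstacle is the careful bookkeeping of the frame vectors under $Q^{tr}$, in particular verifying $Q^{tr}e_r^{(v)}(Qx)=e_r^{(\mathrm{std})}(x)$ through the projection identity $x'=x-(x\cdot v)v$, and confirming that the swirl-free (zero azimuthal) structure survives the transformation. The latter is automatic: $Q^{tr}$ is orthogonal and maps the $v$-meridional half-plane isometrically onto the $e_3$-meridional half-plane, so a field with no azimuthal component is sent to a field with no azimuthal component, and no swirl is created. With that frame correspondence in hand, every assertion of the proposition reduces to the matching of a single pair of scalar profiles.
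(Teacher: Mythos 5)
Your proof is correct and follows essentially the same route as the paper: a direct correspondence of the cylindrical data ($z$, $r$, $e_r$) under conjugation by $Q^{tr}$ (using $Q^{tr}v=e_3$ and the projection identity $x'=x-(x\cdot v)v$), with the converse obtained from \Cref{OrthoComposeProp} and the mirror-symmetric refinement from \Cref{AxisymMirrorSymLemma} via the preserved scalar profiles. The only cosmetic difference is the direction of the computation (you transform $u$ forward, the paper builds $\tilde{u}$ and verifies $\tilde{u}^Q=u$), which is immaterial.
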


\begin{proof}
    We begin with the forward direction.
    Suppose $u$ is $v$-axisymmetric, swirl-free, and is therefore given by
    \begin{equation}
    u(x)=u_r(r,z)e_r+u_z(r,z)e_z,
    \end{equation}
    where
    \begin{align}
    z&=x\cdot v \\
    x'&=x-(x\cdot v)v \\
    r&=|x'| \\
    e_r &= \frac{x'}{|x'|} \\
    e_z &= v.
    \end{align}
    Now let
    \begin{equation}
    \Tilde{u}(x)
    = u_r(\Tilde{r},\Tilde{z})e_{\Tilde{r}}
    +u_z(\Tilde{r},\Tilde{z})e_3
    \end{equation}
    where
    \begin{align}
    \Tilde{z}&=x_3\\
    \Tilde{x}'&=(x_1,x_2,0) \\
    \Tilde{r}&=|\Tilde{x}'| \\
    e_{\Tilde{r}} &= \frac{\Tilde{x}'}
    {|\Tilde{x}'|}.
    \end{align}
    It is clear from \Cref{AxisymDef} and \Cref{AxisymDivProp}, that $\Tilde{u}$ is $e_3$-axisymmetric, swirl-free. 
    
    It remains only to show that 
    \begin{equation}
    \Tilde{u}=u^{Q^{tr}}.
    \end{equation}
    We begin by letting
    $y=Q^{tr}x,$
    and taking $\Tilde{z},\Tilde{r},
    e_{\Tilde{r}}$ as above but in terms of $y$.
    We can then clearly see that
    \begin{equation}
    \Tilde{z}=y_3=x\cdot v=z.
    \end{equation}
    Likewise, we can see that
    \begin{align}
    \Tilde{y}'
    &=
    y-y_3e_3 \\
    &=
    Q^{tr}x- (x\cdot v)e_3 \\
    &=
    Q^{tr}(x-(x\cdot v)v) \\
    &=
    Q^{tr}x'.
    \end{align}
    We know that multiplication by orthogonal matrices is length preserving, so 
    \begin{equation}
    \Tilde{r}=|\Tilde{y}'|=|x'|=r,
    \end{equation}
    and consequently 
    \begin{equation}
    e_{\Tilde{r}}= Q^{tr} e_r.
    \end{equation}
    Applying the above identities, we can compute that
    \begin{align}
    \Tilde{u}^Q(x)
    &=
    Q\Tilde{u}\left(Q^{tr}x\right) \\
    &=
    Q\Tilde{u}(y) \\
    &=
    u_r(\Tilde{r},\Tilde{z})Qe_{\Tilde{r}}
    +u_z(\Tilde{r},\Tilde{z})Qe_3 \\
    &=
    u_r(r,z)e_r+u_z(r,z)e_z \\
    &=
    u(x).
    \end{align}
    Applying \Cref{OrthoComposeProp},
    we can see that
    \begin{equation}
    \Tilde{u}^Q=u,
    \end{equation}
    implies that
    \begin{equation}
    \Tilde{u}=u^{Q^{tr}}.
    \end{equation}

    Now suppose that additionally $u$ is $v$-mirror symmetric. Applying \Cref{AxisymMirrorSymLemma}, we can see that $u_r$ is even in $z$ and $u_z$ is odd in $z$. Again applying \Cref{AxisymMirrorSymLemma}, this implies that $u^{Q^{tr}}=\Tilde{u}$ is $e_3$-mirror symmetric, which completes the forward direction of the proof under the additional mirror symmetric hypothesis.
    
    The proof of the reverse direction is essentially the same. Just define $u$ in terms of $\Tilde{u}$, instead of the other way around, and show that
    \begin{equation}
    u=\Tilde{u}^Q.
    \end{equation}
\end{proof}

\begin{remark}
    Note that equivalently, $\Tilde{u}$ is $e_3$-axisymmetric, swirl-free if and only if $\Tilde{u}^Q$ is $v$-axisymmetric and swirl-free, with the same hypotheses on $Q$, as we can see from the construction in the proof.
    Likewise, $\Tilde{u}$ is $e_3$-axisymmetric, swirl-free and $e_3$-mirror symmetric if and only if $\Tilde{u}^Q$ is $v$-axisymmetric, swirl-free, and $v$-mirror symmetric.
\end{remark}

\begin{proposition} \label{AxisymCurlPropB}
    Suppose $u\in H^s_{df}, s>\frac{7}{2}$.
    Then $u$ is $v$-axisymmetric and swirl-free if and only if
    \begin{equation}
    \omega(x)=-\phi(r,z)v\times x,
    \end{equation}
    where $\phi\in C^\alpha\left(\mathbb{R}^+\times 
    \mathbb{R}\right), \alpha=s-\frac{7}{2}$, is given by
    \begin{equation}
    \phi(r,z)=
    \frac{-\partial_zu_r+\partial_r u_z}{r}
    \end{equation}
\end{proposition}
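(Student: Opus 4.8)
The plan is to prove the two implications separately, using \Cref{AxisymCurlPropA} for the algebraic structure of the vorticity and \Cref{VortGeneralSymThm} together with \Cref{OrthoEquivProp} to pass between the $v$-axisymmetric and $e_3$-axisymmetric pictures. The algebraic content is essentially free from earlier results; the genuine work is the regularity of $\phi$ and the recovery of the swirl-free condition.

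For the forward direction, suppose $u$ is $v$-axisymmetric and swirl-free. Then \Cref{AxisymCurlPropA} already supplies the identity
\begin{equation}
\omega(x)=\frac{\partial_z u_r-\partial_r u_z}{r}(r,z)\, v\times x,
\end{equation}
so defining $\phi(r,z)=\frac{-\partial_z u_r+\partial_r u_z}{r}$ gives $\omega=-\phi\, v\times x$ immediately. The only substantive content left is the regularity claim $\phi\in C^\alpha$ with $\alpha=s-\tfrac72$. Since $|v\times x|=r$, the scalar $\omega_\theta:=\partial_z u_r-\partial_r u_z$ is the azimuthal component of $\omega$, and $\phi=-\omega_\theta/r$, where $\omega\in H^{s-1}\hookrightarrow C^{s-5/2}=C^{\alpha+1}$. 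I would obtain the $C^\alpha$ bound by noting that $\omega$ is continuous and vanishes on the axis $\{x'=0\}$, where $v\times x=0$, so $\omega_\theta(0,z)=0$; exploiting the parity forced by axisymmetry (the azimuthal vorticity extends as an odd function of $r$), a Taylor expansion about the axis writes $\omega_\theta(r,z)=r\,\psi(r,z)$ with $\psi\in C^{\alpha}$, whence $\phi=-\psi\in C^\alpha$. Equivalently, one may transport everything to the $e_3$-axisymmetric setting via \Cref{OrthoEquivProp} (under which $\omega$ maps to $\omega^{Q^{tr}}$ by \Cref{VortThmSO}, as $\det Q^{tr}=1$) and invoke the classical regularity theory for axisymmetric swirl-free flows used in \cite{Elgindi}, in which $\omega_\theta/r$ is controlled at the cost of exactly one Hölder derivative.

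For the reverse direction, suppose $u\in H^s_{df}$ and $\omega(x)=-\phi(r,z)\, v\times x$. I would first check that $\omega$ is invariant under every rotation $R_\theta$ about the $v$-axis: the cylindrical coordinates $r$ and $z$ are unchanged by such rotations, and $R_\theta(a\times b)=(R_\theta a)\times(R_\theta b)$ since $R_\theta\in SO(3)$, so a direct computation gives $\omega^{R_\theta}=\omega$. Applying \Cref{VortGeneralSymThm} with $Q=R_\theta$ for each $\theta$ (using $\det R_\theta=1$) then forces $u=u^{R_\theta}$ for all $\theta$, i.e.\ $u$ is $v$-axisymmetric. To rule out swirl, write the now-axisymmetric $u=u_r e_r+u_\theta e_\theta+u_z e_z$ with components depending only on $(r,z)$; since $\omega=-\phi r\, e_\theta$ is purely azimuthal, the cylindrical curl formula forces $\partial_z u_\theta=0$ and $\frac1r\partial_r(r u_\theta)=0$, so $u_\theta=c/r$, and because $c/r\notin L^2$ near the axis the integrability $u\in H^s$ gives $c=0$. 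Thus $u$ is swirl-free, and then \Cref{AxisymCurlPropA} identifies $\phi$ with $\frac{-\partial_z u_r+\partial_r u_z}{r}$, whose $C^\alpha$ regularity is supplied by the forward direction.

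The main obstacle is the regularity statement in the forward direction: converting $\omega\in H^{s-1}\hookrightarrow C^{\alpha+1}$ into $C^\alpha$ control of $\phi=-\omega_\theta/r$ across the degenerate axis, where the raw Sobolev embedding is insufficient and the vanishing and parity of the azimuthal vorticity on the axis must be exploited. This is precisely where the hypothesis $s>\tfrac72$ enters, guaranteeing $\alpha>0$ so that the quotient $\omega_\theta/r$ remains genuinely Hölder continuous.
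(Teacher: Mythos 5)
Your proof is correct, and it diverges from the paper's in a way worth noting. For the forward direction you and the paper both start from \Cref{AxisymCurlPropA}; the difference is that the paper dispatches the regularity of $\phi$ in one sentence (arguing that continuity of the H\"older-continuous $\nabla\omega$ across the singular axis forces $(\partial_z u_r-\partial_r u_z)/r$ to be H\"older), whereas you make the mechanism explicit: $\omega\in H^{s-1}\hookrightarrow C^{1,\alpha}$, the azimuthal vorticity vanishes on the axis and extends oddly in $r$, so $\omega_\theta=r\psi$ with $\psi\in C^\alpha$ by a first-order Taylor expansion. Your version is the more complete argument and correctly locates where $s>\tfrac72$ is used. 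For the reverse direction the two routes genuinely differ: the paper conjugates by the rotation $Q$ from \Cref{OrthoEquivProp} to reduce to the $e_3$-axis and then cites as classical that a purely azimuthal vorticity forces an axisymmetric swirl-free velocity; you instead verify $\omega^{R_\theta}=\omega$ for every rotation about the $v$-axis, invoke \Cref{VortGeneralSymThm} (legitimate here since $\det R_\theta=1$ and $s>\tfrac52$) to get $u=u^{R_\theta}$, and then kill the swirl by hand via the cylindrical curl formula, $r u_\theta=\mathrm{const}$, and the failure of $c/r$ to be locally $L^2$ near the axis. Your route buys self-containedness — it proves rather than cites the "classical" step — at the cost of a slightly longer argument; the paper's route buys brevity and reuses \Cref{OrthoEquivProp}, which it needs elsewhere anyway. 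Both are sound.
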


\begin{proof}
    We have already shown in \Cref{AxisymCurlPropA} that if $u$ is $v$-axisymmetric and swirl-free, then
    \begin{align}
    \omega
    &=
    \frac{\partial_zu_r-\partial_r u_z}{r}
    v\times x \\
    &=
    -\phi(r,z) v\times x,
    \end{align}
    and we know that $e_r$ has a singularity at when $r=0$, so in order for H\"older continuous $\nabla\omega$ to be continuous, we need
    $\frac{\partial_zu_r-\partial_r u_z}{r}$ to be H\"older continuous, and hence $\phi\in C^\alpha$.
    Now consider the other direction.
    Suppose
    \begin{equation}
    \omega(x)=-\phi(r,z)v\times x.
    \end{equation}
    Let 
    \begin{equation}
    \Tilde{u}=u^{Q^{tr}},
    \end{equation}
    where $Q$ is defined as above. Because we have $\det(Q)=1$ by construction, we can see that
    \begin{align}
    \Tilde{\omega}(x)
    &=
    -\phi(\Tilde{r},\Tilde{z})
    e_3\times x \\
    &=
    -\Tilde{r}\phi(\Tilde{r},\Tilde{z}) e_\theta.
    \end{align}
    It is classical that this implies that $\Tilde{u}$ is $e_3$-axisymmetric and swirl-free, so by \Cref{OrthoEquivProp}, we find that
    $u$ is $v$-axisymmetric and swirl-free,
    which completes the proof.
\end{proof}

\begin{theorem}
    Suppose $u^0\in H^s_{df}, s>\frac{5}{2}$ is $v$-axisymmetric, swirl-free.
    Then there exists a unique, global smooth solution of the Euler equation 
    $u\in C\left([0,+\infty);H^s_{df}\right)
    \cap C^1\left([0,+\infty);
    H^{s-1}_{df}\right)$,
    and this solution is also $v$-axisymmetric and swirl-free.
    Furthermore, if $s>\frac{7}{2}$, then for all $0\leq t<+\infty$, the vorticity can be expressed by
    \begin{equation}
    \omega(x,t)=
    -\phi(r,z,t)
    v\times x,
    \end{equation}
    where $\phi$ is transported by the flow, with
    \begin{equation}
        \partial_t\phi
        +u_r\partial_r\phi
        +u_z\partial_z\phi
        =0.
    \end{equation}
\end{theorem}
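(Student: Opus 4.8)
The plan is to reduce the entire statement to the classical theory of $e_3$-axisymmetric, swirl-free flows by conjugating with the rotation $Q$ from \Cref{OrthoEquivProp}, and then to transport every conclusion back using the $SO(3)$-invariance of the Euler equation in \Cref{SO3InvariantProp}. Fix $Q=[w,\Tilde{w},v]\in SO(3)$ with $w\times\Tilde{w}=v$, and set $\Tilde{u}^0=(u^0)^{Q^{tr}}$. By \Cref{OrthoEquivProp}, $\Tilde{u}^0\in H^s_{df}$ is $e_3$-axisymmetric and swirl-free, so the whole problem is now posed in the standard cylindrical geometry.

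First I would invoke the classical global regularity theory for $e_3$-axisymmetric, swirl-free data, due to Ukhovskii and Yudovich \cite{Yudovich} (and in this precise regularity class Danchin \cite{Danchin}, whose hypotheses hold automatically for $H^s_{df}$ data with $s>\frac{5}{2}$, as noted in the introduction): the local solution from \Cref{EulerExistThm} extends to a unique global solution $\Tilde{u}\in C\left([0,\infty);H^s_{df}\right)\cap C^1\left([0,\infty);H^{s-1}_{df}\right)$ that remains $e_3$-axisymmetric and swirl-free for all time (preservation of the subspace being a uniqueness argument of the type used for \Cref{PermuteEulerProp}). I would then define $u=\Tilde{u}^Q$. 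By \Cref{SO3InvariantProp}, $u$ is a global Euler solution in the same regularity class, while \Cref{OrthoComposeProp} gives $u(\cdot,0)=\left((u^0)^{Q^{tr}}\right)^Q=u^0$, so the initial data is correct. The remark following \Cref{OrthoEquivProp} shows $u=\Tilde{u}^Q$ is $v$-axisymmetric and swirl-free for every $t$, and uniqueness of $u$ among all $H^s_{df}$ solutions is immediate from the uniqueness in \Cref{EulerExistThm}.

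For the vorticity structure when $s>\frac{7}{2}$, I would apply \Cref{AxisymCurlPropB} at each fixed time: since $u(\cdot,t)$ is $v$-axisymmetric and swirl-free, there is a scalar $\phi(\cdot,t)\in C^\alpha$, $\alpha=s-\frac{7}{2}$, with $\omega(x,t)=-\phi(r,z,t)\,v\times x$. To get the transport equation I would argue in the rotated frame, where it is classical that $\Tilde{\omega}_\theta/r$ is materially conserved, $(\partial_t+\Tilde{u}\cdot\nabla)(\Tilde{\omega}_\theta/r)=0$. Writing $\Tilde{\omega}=-\Tilde{\phi}(r,z,t)\,e_3\times x=-r\Tilde{\phi}\,e_\theta$ identifies $\Tilde{\phi}=-\Tilde{\omega}_\theta/r$, and since $\Tilde{u}$ is swirl-free with no angular dependence the advection collapses to $\Tilde{u}\cdot\nabla=\Tilde{u}_r\partial_r+\Tilde{u}_z\partial_z$, yielding $\partial_t\Tilde{\phi}+\Tilde{u}_r\partial_r\Tilde{\phi}+\Tilde{u}_z\partial_z\Tilde{\phi}=0$. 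The coordinate identities from the proof of \Cref{OrthoEquivProp} ($\Tilde{r}=r$, $\Tilde{z}=z$, and $u_r=\Tilde{u}_r$, $u_z=\Tilde{u}_z$ as profiles), together with $\det(Q)=1$ and \Cref{VortThmSO} (which give $\Tilde{\omega}=\omega^{Q^{tr}}$ and hence $\phi=\Tilde{\phi}$ as functions of $(r,z,t)$), let the transport equation descend verbatim to $\phi$.

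I expect the genuine obstacle to be the global existence step, that is, the exclusion of finite-time singularities. This is precisely the content of the Ukhovskii--Yudovich theorem and is not elementary: it rests on the material conservation of $\Tilde{\omega}_\theta/r$, which yields a time-uniform $L^\infty$ bound that must then be converted, through the Biot--Savart law and the Beale--Kato--Majda criterion of \Cref{EulerExistThm}, into an a priori bound on $\int_0^T\|\Tilde{\omega}(\cdot,t)\|_{L^\infty}\diff t$ ruling out blowup. Everything else in the statement is a transfer of these facts through the single fixed rotation $Q$, so once the classical global regularity is invoked the remaining steps are bookkeeping with the propositions already established.
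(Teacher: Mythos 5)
Your proposal is correct and follows essentially the same route as the paper: conjugate by the rotation $Q$ to reduce to the classical $e_3$-axisymmetric, swirl-free theory of Ukhovskii--Yudovich and Danchin, transport the global solution back via $u=\Tilde{u}^Q$, and derive the form and transport of $\phi$ from the classical material conservation of $\Tilde{\omega}_\theta/\Tilde{r}$ in the rotated frame. The only cosmetic difference is that you obtain the representation $\omega=-\phi(r,z,t)\,v\times x$ by citing \Cref{AxisymCurlPropB} at each fixed time, whereas the paper recomputes $\omega(x)=Q\Tilde{\omega}(Q^{tr}x)$ directly, which amounts to the same calculation.
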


\begin{proof}
    The proof amounts to using the equivalence results we have just proven to lift global regularity for $e_3$-axisymmetric solutions of the Euler equation, which is classical, to general $v$-axisymmetric solutions, and to show that the knew relative vorticity is still transported.
    In particular, we will apply the original results of Ukhovskii and Yudovich \cite{Yudovich}, as well as the more recent results of Danchin \cite{Danchin}, which allow us to prove global regularity when we only assume $u\in H^s$, with $s>\frac{5}{2}$.
    We begin by letting 
    \begin{equation}
    \Tilde{u}^0=\left(u^0\right)^{Q^{tr}},
    \end{equation}
    with $Q$ defined as above.
    We know that $\Tilde{u}^0$ is $e_3$ axisymmetric and swirl-free, and so there exists a unique, global smooth solution of the Euler equation
    $\Tilde{u}\in C\left([0,+\infty);H^s_{df}\right)
    \cap C^1\left([0,+\infty);
    H^{s-1}_{df}\right)$.
    Now we we will let $u\in C\left([0,+\infty);H^s_{df}\right)
    \cap C^1\left([0,+\infty);
    H^{s-1}_{df}\right)$ be defined by
    \begin{equation}
    u=\Tilde{u}^Q.
    \end{equation}
    We can see therefore that $u$ is the unique solution of the Euler equation with initial data $u^0$, and that $u$ is $v$-axisymmetric and swirl-free.

    This completes the proof of global existence;
    we will now show that the relative vorticity is transported by the flow when $s>\frac{7}{2}$.
    We know that 
    \begin{equation}
    \Tilde{\omega}(x)=
    \omega_\theta(\Tilde{r},\Tilde{z})e_\theta,
    \end{equation}
    where $\omega_\theta$ must vanish linearly at $\Tilde{r}=0$.
    Defining $\phi$ by
    \begin{equation}
    \phi(\Tilde{r},\Tilde{z})=
    -\frac{\Tilde{\omega}_\theta
    (\Tilde{r},\Tilde{z})}
    {\Tilde{r}},
    \end{equation}
    we can see that
    \begin{equation}
    \Tilde{\omega}(x)=
    -\phi(\Tilde{r},\Tilde{z})e_3 \times x.
    \end{equation}
    It then follows from \Cref{VortThmSO} that
    \begin{align}
    \omega(x)
    &=
    Q \Tilde{\omega}(Q^{tr}x) \\
    &=
    -\phi(r,z) Q(e_3\times Q^{tr}x) \\
    &=
    -\phi(r,z Q 
    ((x\cdot w)e_2
    -(x\cdot\Tilde{w})e_1) \\
    &=
    -\phi(r,z) 
    ((x\cdot w) \Tilde{w}
    -(x\cdot\Tilde{w})w) \\
    &=
    -\phi(r,z) 
    ((x\cdot w) (v\times w)
    +(x\cdot\Tilde{w})(v\times \Tilde{w})) \\
    &=
    -\phi(r,z) v\times x.
    \end{align}
    
    It now remains only to show that $\phi$ is transported by the flow.
    It is classical that, for $e_3$-axisymmetric, swirl-free solutions of the Euler equation, the vorticity equation can be expressed by
    \begin{equation}
    \partial_t\Tilde{\omega}_\theta
    +\Tilde{u}_r\partial_{\Tilde{r}}
    \Tilde{\omega}_\theta
    +\Tilde{u}_z\partial_z\Tilde{\omega}_\theta
    -\frac{\Tilde{u}_r}
    {\Tilde{r}}\Tilde{\omega}_\theta
    =0,
    \end{equation}
    and that therefore
    \begin{equation}
    \left(\partial_t
    +\Tilde{u}_r\partial_{\Tilde{r}}
    +\Tilde{u}_z\partial_{\Tilde{z}}\right)
    \left(-\frac{\Tilde{\omega}_\theta}
    {\Tilde{r}}\right)
    =0,
    \end{equation}
    and so $\phi$ is transported by the flow. This completes the proof and shows that we have the correct generalized definition of relative vorticity.
\end{proof}

\begin{proposition} \label{AxisymPermuteProp}
    Suppose $u\in L^2 \cap C^{1,\alpha}, \alpha>0$ is $\Tilde{\sigma}$-axisymmetric, swirl-free. Then $u$ is permutation symmetric.
\end{proposition}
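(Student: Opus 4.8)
The plan is to use the fact that every permutation matrix fixes the axis of symmetry, together with the fact that a swirl-free axisymmetric field is invariant under the full stabilizer of its axis in the orthogonal group --- not only the rotations about the axis, but also the reflections across planes containing it. Set $v=\Tilde{\sigma}$. Each $P\in\mathcal P_3$ satisfies $P\sigma=\sigma$ (permuting the entries of $(1,1,1)^{tr}$ does nothing), hence $Pv=v$. The even permutations $P_f,P_b$ have $\det P=1$ and so act as rotations about the $v$-axis, while the swaps $P_{12},P_{13},P_{23}$ have $\det P=-1$ and act as reflections across planes containing the $v$-axis (for instance $P_{23}$ is the reflection across $\{x_2=x_3\}\ni\sigma$). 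By \Cref{PermuteGenerateThm} it would suffice to verify $u^{P_{12}}=u$ and $u^{P_{13}}=u$, so the essential case is a reflection; in fact the computation below treats every $P$ uniformly.

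Working from the pointwise representation $u(x)=u_r(r,z)e_r+u_z(r,z)e_z$ supplied by \Cref{AxisymDef} (available here since $C^{1,\alpha}\subset C$), I would first record that each $P\in\mathcal P_3$ preserves the cylindrical coordinates attached to the $v$-axis. Using $Pv=v$, and hence $P^{tr}v=v$, one gets $z(P^{tr}x)=P^{tr}x\cdot v=x\cdot v=z$ and $(P^{tr}x)'=P^{tr}x-(x\cdot v)v=P^{tr}x'$, so $r(P^{tr}x)=r$ and $e_r(P^{tr}x)=P^{tr}e_r$. Substituting into $u^{P}(x)=Pu(P^{tr}x)$ then yields
\begin{equation}
u^{P}(x)=u_r(r,z)\,PP^{tr}e_r+u_z(r,z)\,Pv=u_r(r,z)e_r+u_z(r,z)v=u(x).
\end{equation}
Points on the axis, where $e_r$ is undefined, are fixed by $P$ and cause no trouble, since there $u=u_z v$. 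Hence $u^P=u$ for every $P\in\mathcal P_3$, which is exactly permutation symmetry.

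The only place where the hypotheses are genuinely used is the reflection case: an orientation-reversing $P$ sends the azimuthal direction $e_\theta=v\times e_r$ to its negative, so a nonzero swirl component $u_\theta e_\theta$ would pick up a sign and obstruct $u^P=u$; it is precisely swirl-freeness that makes the reflections symmetries. A cleaner packaging of the same argument is available through \Cref{OrthoEquivProp}: choosing $Q\in SO(3)$ whose third column is $v$, the field $\Tilde u=u^{Q^{tr}}$ is $e_3$-axisymmetric and swirl-free, and by \Cref{OrthoComposeProp} one checks that $u^P=u$ is equivalent to $\Tilde u^{\,Q^{tr}PQ}=\Tilde u$; since $Q^{tr}PQ$ fixes $e_3$, this reduces the claim to the classical statement that an $e_3$-axisymmetric, swirl-free field is invariant under every orthogonal map fixing the $e_3$-axis. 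Either way the argument is entirely elementary: the main obstacle is only the bookkeeping confirming that the orientation-reversing permutations really do preserve $u$, with no analytic difficulty.
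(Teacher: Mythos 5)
Your proof is correct and follows essentially the same route as the paper: both verify that each permutation fixes $\Tilde{\sigma}$ and hence preserves $z$, $r$, and sends $e_r(P^{tr}x)$ to $P^{tr}e_r(x)$, so that $u^P(x)=Pu(P^{tr}x)=u_r(r,z)e_r+u_z(r,z)\Tilde{\sigma}=u(x)$. Your additional remarks on why swirl-freeness is the essential hypothesis for the orientation-reversing permutations, and the alternative packaging via \Cref{OrthoEquivProp}, are sound but do not change the substance of the argument.
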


\begin{proof}
    First we observe that if $u$ is $\Tilde{\sigma}$-axisymmetric, swirl-free, then
    \begin{equation}
    u(x)=u_r(r,z)e_r+ u_z(r,z) \Tilde{\sigma},
    \end{equation}
    where
    \begin{align}
    z&=
    \frac{1}{\sqrt{3}}\sigma\cdot x \\
    x'&=x-\frac{1}{3}(\sigma\cdot x)\sigma \\
    r&=|x'|
    =\left(|x|^2-\frac{1}{3}(\sigma\cdot x)^2
    \right)^\frac{1}{2} \\
    e_r&=\frac{x'}{r}.
    \end{align}
    Observe that $r,z,\Tilde{\sigma}$ are all invariant under permutations. Fix a permutation $P\in\mathcal{P}_3$.
    Letting $\Tilde{x}=P^{tr}x$,
    it is immediate that
    \begin{align}
    \Tilde{z}&=z \\
    \Tilde{r} &= r \\
    \Tilde{x}'&=P^{tr}x' \\
    e_{\Tilde{r}}&= P^{tr} e_r.
    \end{align}
    Next we compute that
    \begin{align}
    u^P(x)
    &=
    Pu(P^{tr}x) \\
    &=
    Pu(\Tilde{x}) \\
    &=
    P\left(
    u_r(\Tilde{r},\Tilde{z})e_{\Tilde{r}}
    +u_z(\Tilde{r},\Tilde{z})\Tilde{\sigma}
    \right) \\
    &=
    P \left( u_r(r,z)P^{tr}e_r
    +u_z(r,z) \Tilde{\sigma}\right) \\
    &=
    u_r(r,z)e_r+ u_z(r,z) \Tilde{\sigma} \\
    &=
    u(x).
    \end{align}
    Therefore we can see that for all $P\in\mathcal{P}_3$,
    \begin{equation}
    u=u^P,
    \end{equation}
    which completes the proof.
\end{proof}

We can now prove \Cref{AlphaPermuteBlowupIntro}, which is restated here for the reader's convenience.

\begin{theorem} \label{AlphaPermuteBlowup}
    There exists $\alpha>0$ and an odd, permutation symmetric, $\sigma$-mirror symmetric solution of the incompressible Euler equation
    $u\in C\left([0,1);L^2\cap C^{1,\alpha}\right)
    \cap C^1\left([0,1);L^2\cap C^\alpha\right)$,
    satisfying
    \begin{equation}
    \int_0^1 \|\omega(\cdot,t)\|_{L^\infty}
    \diff t
    =+\infty,
    \end{equation}
    and for all $0\leq t <1$,
    \begin{equation}
    \|u(\cdot,t)\|_{L^2}^2
    =\left\|u^0\right\|_{L^2}^2.
    \end{equation}
\end{theorem}

\begin{proof}
    Let
    $\Tilde{u} \in C\left([0,1);L^2\cap C^{1,\alpha}\right)
    \cap C^1\left([0,1);L^2\cap C^\alpha\right)$
    be the $e_3$-axisymmetric, swirl-free, $e_3$-mirror symmetric solution of the Euler equation that Eligndi \cite{Elgindi} and Elgindi, Ghoul, and Masmoudi \cite{ElgindiGhoulMasmoudi} proved blows up in finite-time.
    This solution satisfies the energy equality
    \begin{equation}
    \|\Tilde{u}(\cdot,t)\|_{L^2}^2
    =
    \left\|\Tilde{u}^0\right\|_{L^2}^2.
    \end{equation}
    and the Beale-Kato-Majda criterion.
    \begin{equation}
    \int_0^1 \|\Tilde{\omega}(\cdot,t)
    \|_{L^\infty} \diff t
    =+\infty.
    \end{equation}
    Define $Q\in O(3)$ by
    \begin{equation}
    Q=
    \left(\begin{array}{ccc}
        \frac{1}{\sqrt{2}} & 
        \frac{1}{\sqrt{6}} &
        \frac{1}{\sqrt{3}} \\
        -\frac{1}{\sqrt{2}} &
        \frac{1}{\sqrt{6}} &
         \frac{1}{\sqrt{3}} \\
         0 &
         -\frac{2}{\sqrt{6}} &
         \frac{1}{\sqrt{3}},
    \end{array}
    \right)
    \end{equation}
    and let
    \begin{equation}
    u=\Tilde{u}^Q.
    \end{equation}
    We can see that $u\in C\left([0,1);L^2\cap C^{1,\alpha}\right)
    \cap C^1\left([0,1);L^2\cap C^\alpha\right)$ is a solution of the Euler equation. Applying \Cref{OrthoEquivProp}, we can conclude that $u$ is $\Tilde{\sigma}$-axisymmetric, swirl-free, and $\Tilde{\sigma}$-mirror symmetric. 
    Applying \Cref{AxisymPermuteProp}, we can see that $u$ is permutation symmetric.
    Because $\det(Q)=1$, we can conclude that
    \begin{equation}
    \omega=\Tilde{\omega}^Q.
    \end{equation}
    It then immediately follows from the volume preserving properties of $O(3)$ that
    for all $0\leq t <1$,
    \begin{equation}
    \|u(\cdot,t)\|_{L^2}^2
    =\left\|u^0\right\|_{L^2}^2,
    \end{equation}
    and that
    \begin{equation}
    \int_0^1 \|\omega(\cdot,t)\|_{L^\infty}
    \diff t
    =+\infty.
    \end{equation}
    This completes the proof.
\end{proof}

\subsection{Further geometric considerations}

The planes $x_1=x_2, \: x_1=x_3, \: x_2=x_3$ and the axis $x_1=x_2=x_3$ are important objects for permutation symmetric flows, because these three planes are each invariant for one of the swap permutations, and this axis is invariant under all permutations. In this subsection, we describe the velocity and the vorticity in these subspaces of $\mathbb{R}^3$ when studying permutation symmetric vector fields.

\begin{proposition} \label{PlanesOfSymProp}
    Suppose $u\in H^s_{df}, s>\frac{3}{2}$ is permutation symmetric.
    Then for all $x\in\mathbb{R}^3, x_1=x_2$,
    \begin{equation}
    u_1(x)=u_2(x),
    \end{equation}
    for all $x\in\mathbb{R}^3, x_1=x_3$,
    \begin{equation}
    u_1(x)=u_3(x),
    \end{equation}
    and for all $x\in\mathbb{R}^3, x_2=x_3$,
    \begin{equation}
    u_2(x)=u_3(x),
    \end{equation}
\end{proposition}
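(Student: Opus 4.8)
The plan is to exploit the fact that each swap permutation $P_{12}, P_{13}, P_{23}$ fixes exactly one of the three planes pointwise, and to read off the relevant equality of components by evaluating the symmetry constraint on that plane. I would begin with the plane $x_1 = x_2$ and the permutation $P_{12}$. Since $u$ is permutation symmetric, in particular $u = u^{P_{12}}$, and recalling both the definition $u^{P_{12}}(x) = P_{12}\, u(P_{12}^{tr} x)$ and the fact that $P_{12}$ is a symmetric matrix (so $P_{12}^{tr} = P_{12}$), this reads
\begin{equation}
u(x) = P_{12}\, u(P_{12} x).
\end{equation}

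Next I would unpack this identity componentwise. Because $P_{12}$ swaps the first two entries of any vector it multiplies, applying it to the output $u(P_{12}x)$ interchanges the first and second components, giving in particular
\begin{equation}
u_1(x) = u_2(P_{12} x).
\end{equation}
The key geometric observation is that on the plane $\{x_1 = x_2\}$ the point $x$ is fixed by the swap, i.e. $P_{12} x = x$ whenever $x_1 = x_2$. Substituting this into the displayed identity yields $u_1(x) = u_2(x)$ for all such $x$, which is the first claim.

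The remaining two statements follow by the identical argument applied to $P_{13}$ on $\{x_1 = x_3\}$ and to $P_{23}$ on $\{x_2 = x_3\}$, using that each of these swaps is its own transpose and fixes the corresponding plane pointwise. I do not anticipate a genuine obstacle here: the entire proof is a direct unwinding of the definition of $u^{P}$ combined with the elementary observation that a coordinate swap acts trivially on the diagonal plane it leaves invariant. The only point requiring a little care is the componentwise bookkeeping, namely tracking that $P_{12}$ permutes both the spatial argument of $u$ and the components of its value simultaneously; once this is done correctly the conclusion is immediate.
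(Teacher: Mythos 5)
Your proof is correct and follows essentially the same route as the paper: evaluate the symmetry identity $u(x)=P_{12}u(P_{12}x)$ on the plane $\{x_1=x_2\}$, where $P_{12}x=x$, and read off the equality of components (and analogously for the other two swaps). The only cosmetic difference is that you extract the component identity before substituting $P_{12}x=x$, whereas the paper substitutes first; the content is identical.
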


\begin{proof}
    Suppose $x_1=x_2$ and $u$ is permutation symmetric. Then $x=P_{12}(x)$,
    and so
    \begin{equation}
    u(x)=P_{12}u(x),
    \end{equation}
    and so
    \begin{equation}
    u_1(x)=u_2(x).
    \end{equation}
    The cases $x_1=x_3$ and $x_2=x_3$ are entirely analogous using $P_{13}$ and $P_{23}$ respectively, and are left to the reader.
\end{proof}

\begin{proposition} \label{PlanesOfSymVortProp}
    Suppose $u\in H^s_{df}, s>\frac{5}{2}$ is permutation symmetric.
    Then for all $x\in\mathbb{R}^3, x_1=x_2$,
    \begin{align}
    \omega_1(x)&=-\omega_2(x) \\
    \omega_3(x)&=0,
    \end{align}
    for all $x\in\mathbb{R}^3, x_1=x_3$,
    \begin{align}
    \omega_1(x)&=-\omega_3(x) \\
    \omega_2(x)&=0,
    \end{align}
    and for all $x\in\mathbb{R}^3, x_2=x_3$,
    \begin{align}
    \omega_2(x)&=-\omega_3(x) \\
    \omega_1(x)&=0,
    \end{align}
\end{proposition}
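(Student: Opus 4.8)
The plan is to mirror the proof of \Cref{PlanesOfSymProp}, the analogous statement for the velocity, but to invoke the vorticity symmetry relations of \Cref{VortPermSymThm} in place of the velocity relations $u^{P}=u$. Since $s>\frac{5}{2}$, we have $\omega\in H^{s-1}$ with $s-1>\frac{3}{2}$, so $\omega$ is continuous and all the relations below hold pointwise. The crucial geometric observation is that each swap permutation $P_{ij}$ fixes every point of the plane $x_i=x_j$, so evaluating a symmetry relation on that plane collapses the composition with $P_{ij}$ to a plain multiplication by the matrix $P_{ij}$. The only new feature relative to the velocity case is the parity sign: \Cref{VortPermSymThm} gives $\omega=-\omega^{P_{12}}=-\omega^{P_{13}}=-\omega^{P_{23}}$, and this extra minus sign is precisely what forces one vorticity component to vanish on each plane.

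First I would treat the plane $x_1=x_2$. Observe that $P_{12}x=x$ whenever $x_1=x_2$, so the definition of $\omega^{P_{12}}$ gives $\omega^{P_{12}}(x)=P_{12}\,\omega(P_{12}x)=P_{12}\,\omega(x)$ at such points. The relation $\omega=-\omega^{P_{12}}$ therefore reads $\omega(x)=-P_{12}\,\omega(x)$ on this plane. Reading off components, and using that $P_{12}$ swaps the first two entries while fixing the third, this is exactly
\begin{align}
\omega_1(x) &= -\omega_2(x), \\
\omega_3(x) &= -\omega_3(x),
\end{align}
the second of which yields $\omega_3(x)=0$. This establishes the first case.

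The remaining two cases are entirely analogous. For the plane $x_1=x_3$ one uses $P_{13}x=x$ together with $\omega=-\omega^{P_{13}}$ to obtain $\omega(x)=-P_{13}\,\omega(x)$; since $P_{13}$ swaps the first and third entries while fixing the second, this gives $\omega_1(x)=-\omega_3(x)$ and $\omega_2(x)=0$. For the plane $x_2=x_3$ one uses $P_{23}x=x$ and $\omega=-\omega^{P_{23}}$, and since $P_{23}$ fixes the first entry one obtains $\omega_2(x)=-\omega_3(x)$ and $\omega_1(x)=0$. In each case the component that vanishes is the one indexed by the coordinate left fixed by the swap, because for that component the relation becomes $\omega_k(x)=-\omega_k(x)$. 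I do not anticipate any genuine obstacle here: the argument is a direct evaluation of the symmetry relations on the fixed-point set of each swap, and the only point requiring care is correctly tracking which component is forced to zero.
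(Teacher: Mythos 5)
Your proposal is correct and follows essentially the same route as the paper: evaluate the relation $\omega=-\omega^{P_{ij}}$ of \Cref{VortPermSymThm} on the fixed plane of $P_{ij}$, where it collapses to $\omega(x)=-P_{ij}\,\omega(x)$, and read off the components. The paper likewise treats only the $x_1=x_2$ case in detail and leaves the other two as analogous.
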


\begin{proof}
    Suppose $x_1=x_2$ and $u$ is permutation symmetric. Then $x=P_{12}(x)$,
    and so applying \Cref{VortPermSymThm},
    we can see that
    \begin{equation}
    \omega(x)=-P_{12}\omega(x),
    \end{equation}
    that is
    \begin{equation}
    \left(\begin{array}{c}
        \omega_1(x) \\ \omega_2(x) \\ \omega_3(x)
    \end{array}\right)
    =
    \left(\begin{array}{c}
        -\omega_2(x) \\ -\omega_1(x) \\ -\omega_3(x)
    \end{array}\right),
    \end{equation}
    and therefore
    \begin{align}
    \omega_1(x)&=-\omega_2(x) \\
    \omega_3(x)&=0.
    \end{align}
    The cases $x_1=x_3$ and $x_2=x_3$ are entirely analogous using $P_{13}$ and $P_{23}$ respectively, and are left to the reader.
\end{proof}

\begin{corollary}
    Suppose $u\in H^s_{df}, s>\frac{3}{2}$ is permutation symmetric. Then for all 
    $x\in\mathbb{R}^3, x_1=x_2=x_3$,
    \begin{equation}
    u_1(x)=u_2(x)=u_3(x),
    \end{equation}
    and so
    \begin{equation}
    u(x)=u_1(x)\sigma.
    \end{equation}
\end{corollary}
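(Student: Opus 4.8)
The plan is to derive this as an immediate consequence of \Cref{PlanesOfSymProp}. The key geometric observation is that the axis $\{x_1=x_2=x_3\}=\spn(\sigma)$ sits inside all three planes of symmetry simultaneously: if $x_1=x_2=x_3$, then in particular $x_1=x_2$, and also $x_1=x_3$ (and $x_2=x_3$). Since the hypothesis $s>\frac{3}{2}$ is exactly the regularity required by \Cref{PlanesOfSymProp}, I can apply that proposition on two of these planes at once.

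Concretely, first I would fix $x$ with $x_1=x_2=x_3$. Applying \Cref{PlanesOfSymProp} to the plane $x_1=x_2$ yields $u_1(x)=u_2(x)$, and applying it to the plane $x_1=x_3$ yields $u_1(x)=u_3(x)$. Chaining these two equalities gives $u_1(x)=u_2(x)=u_3(x)$, which is the first claimed identity. Then I would simply assemble the velocity vector: writing $c=u_1(x)$, we have
\begin{equation}
u(x)=\left(\begin{array}{c} u_1(x) \\ u_2(x) \\ u_3(x)\end{array}\right)
=c\left(\begin{array}{c} 1 \\ 1 \\ 1\end{array}\right)
=u_1(x)\sigma,
\end{equation}
which is the second claimed identity.

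I do not expect any genuine obstacle here, as the statement is a direct specialization of the already-established \Cref{PlanesOfSymProp} to the intersection of two symmetry planes. The only point warranting a moment's care is confirming that the regularity thresholds line up, and they do, since \Cref{PlanesOfSymProp} is stated for $u\in H^s_{df}$ with $s>\frac{3}{2}$. As an aside, one could equally well argue directly from the definition of permutation symmetry: every permutation matrix $P\in\mathcal{P}_3$ fixes $x$ (that is, $P^{tr}x=x$ since $\spn(\sigma)$ is the set of points invariant under all permutations), so the identity $u^P=u$ evaluated at $x$ gives $u(x)=P\,u(P^{tr}x)=P\,u(x)$, forcing the coordinates of $u(x)$ to be invariant under all permutations and hence equal. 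I would favor the first route, however, since it reuses \Cref{PlanesOfSymProp} verbatim and keeps the argument to a single line of bookkeeping.
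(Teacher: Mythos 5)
Your proposal is correct and matches the paper's proof, which likewise derives the corollary immediately from \Cref{PlanesOfSymProp} by intersecting two of the symmetry planes. The regularity threshold check and the alternative direct argument via $P^{tr}x=x$ are fine but not needed.
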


\begin{proof}
    This follows immediately from \Cref{PlanesOfSymProp}
\end{proof}

\begin{corollary}
    Suppose $u\in H^s_{df}, s>\frac{5}{2}$ is permutation symmetric. Then for all 
    $x\in\mathbb{R}^3, x_1=x_2=x_3$,
    \begin{equation}
    \omega(x)=0.
    \end{equation}
\end{corollary}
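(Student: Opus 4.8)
The plan is to deduce this immediately from \Cref{PlanesOfSymVortProp}, which already describes the vorticity on each of the three planes of symmetry. The key observation is that the condition $x_1=x_2=x_3$ places the point $x$ simultaneously on all three of the planes $x_1=x_2$, $x_1=x_3$, and $x_2=x_3$, so every conclusion of \Cref{PlanesOfSymVortProp} applies at once.

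Concretely, I would argue as follows. Since $s>\frac{5}{2}$, Sobolev embedding gives $\omega\in C^\alpha$ with $\alpha=s-\frac{5}{2}>0$, so the pointwise statements of \Cref{PlanesOfSymVortProp} hold everywhere and may be invoked at the single point $x$. Because $x_1=x_2$, the plane result for $P_{12}$ yields $\omega_3(x)=0$; because $x_1=x_3$, the plane result for $P_{13}$ yields $\omega_2(x)=0$; and because $x_2=x_3$, the plane result for $P_{23}$ yields $\omega_1(x)=0$. Collecting these three identities gives $\omega(x)=0$, which is exactly the claim.

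There is essentially no obstacle here: the content is entirely carried by \Cref{PlanesOfSymVortProp}, and the corollary is just the statement that the intersection of the three invariant planes forces all three components to vanish simultaneously. The only point worth a word of care is ensuring the earlier proposition is applied at a point lying in all three planes, rather than on a single plane, but this is automatic once $x_1=x_2=x_3$. As a consistency check, this also recovers the vanishing $\omega(x)=0$ on the $\sigma$-axis that was noted independently in \Cref{SigmaAxisGradientProp}.
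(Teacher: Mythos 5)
Your proposal is correct and follows exactly the paper's route: the paper also derives the corollary immediately from \Cref{PlanesOfSymVortProp}, with your explicit tracking of which plane kills which component being just a slightly more detailed spelling-out of the same one-line deduction.
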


\begin{proof}
    This follows immediately from \Cref{PlanesOfSymVortProp}
\end{proof}

\begin{remark}
    When considering the single-component vorticity equation in terms of $\omega_1$, \Cref{PlanesOfSymProp,PlanesOfSymVortProp} imply that the plane $x_2=x_3$ is preserved by the flow map, and that $\omega_1=0$ in this plane. One approach, based on this fact, could be to treat 
    \begin{equation}
    \zeta= 
    \frac{\omega_1}{x_2-x_3},
    \end{equation}
    as the generalization of the relative vorticity for permutation-symmetric solutions, noting that this quantity will be bounded for smooth solutions. Recall that in the axisymmetric case (about the axis $x_1=x_2=x_3)$,
    \begin{equation}
    \omega(x)=\phi(r,z)
    \left(\begin{array}{c}
          x_2-x_3 \\ x_3-x_1 \\ x_1-x_2,
    \end{array}\right)
    \end{equation}
    and so 
    \begin{equation}
    \phi(r,z)=\frac{\omega_1(x)}{x_2-x_3}.
    \end{equation}
    So $\zeta$ is simply the relative vorticity in the special case of $\Tilde{\sigma}$-axisymmetric, swirl-free.
    Note that $\zeta$ remains well defined for smooth, permutation-symmetric solutions that are not axisymmetric, swirl-free with respect to the $\sigma$-axis. In the non-axisymmetric case, however, the quantity $\zeta$ will not be transported by the flow, and therefore has the potential to unlock insights into finite-time blowup.
\end{remark}

\section{Permutation symmetry in cylindrical coordinates}

We are interested in the behaviour of $\mathcal{G}_\sigma$-symmetric solutions of the Euler equation, because $\mathcal{G}_\sigma$-symmetric solutions of the Fourier-restricted Euler model equation---which has many of the key features of the full Euler equation---blowup in finite-time \cite{MillerRestricted}.
In this section, we will prove \Cref{EquivThmIntro}, showing the the symmetry groups $\mathcal{G}$ and $\mathcal{G}_\sigma$ are equivalent up to rotation.
The change of coordinates by the rotation $Q_\sigma$ allows us to recast the discrete group of symmetries $\mathcal{G}_\sigma$ as $\mathcal{G}$. 
This is useful, because $\mathcal{G}$ can be expressed in terms of mirror symmetries in the coordinate axes and rotational symmetries in the horizontal plane, which lends itself naturally to expression in cylindrical coordinates as a Fourier series in the $\theta$ variable.

\subsection{Basic properties of the symmetry groups
$\mathcal{G}$ and $\mathcal{G}_\sigma$}

We begin by considering the main properties of the groups $\mathcal{G}_\sigma$ and $\mathcal{G}$, in particular their generators.

\begin{lemma} \label{SigmaCommuteLemma}
    The reflection $M_{\Tilde{\sigma}}$ commutes with $\mathcal{P}_3$; for all $P\in\mathcal{P}_3$
    \begin{equation}
    M_{\Tilde{\sigma}}P=
    PM_{\Tilde{\sigma}}.
    \end{equation}
\end{lemma}

\begin{proof}
    This follows immediately from the fact that for all $P\in\mathcal{P}_3, x\in\mathbb{R}^3,$
    \begin{equation}
    \Tilde{\sigma}\cdot x= \Tilde{\sigma} \cdot P(x)
    \end{equation}
\end{proof}

\begin{proposition} \label{PermuteGenerateProp}
    The symmetry group $\mathcal{P}_3$ is generated by $P_{12}$ and $P_f$, and the six elements of $\mathcal{P}_3$
    are given by
    \begin{equation}
    P=P_{12}^\alpha P_f^\beta,
    \end{equation}
    where $\alpha\in\{0,1\}, 
    \beta\in\{0,1,2\}$.
\end{proposition}

\begin{proof}
    Observe that
    \begin{align}
    P_b&= P_f^2 \\
    P_{23} &= P_{12} P_f \\
    P_{13} &= P_{12} P_b = P_{12}P_f^2,
    \end{align}
    and this completes the proof.
\end{proof}

\begin{proposition} \label{SigmaGenerateProp1}
    The symmetry group $\mathcal{G}_\sigma$ is generated by $-I_3,M_\sigma,P_{12},P_f$. 
    Furthermore, $\mathcal{G}_\sigma$ has the 24 elements
    \begin{equation}
    \mathcal{G}_\sigma
    =
    \pm \mathcal{P}_3 \cup 
    \pm M_{\Tilde{\sigma}} \mathcal{P}_3;
    \end{equation}
    that is, for all $Q\in\mathcal{G}_\sigma$
    \begin{equation}
    Q=
    (-1)^a M_{\Tilde{\sigma}}^b P_{12}^\alpha P_f^\beta,
    \end{equation}
    where $a,b,\alpha\in \{0,1\}$ and 
    $\beta\in\{0,1,2\}$.
\end{proposition}

\begin{proof}
    We know by definition that any element in $\mathcal{G}_\sigma$ can be written as an arbitrary product of $-I_3, M_{\Tilde{\sigma}}$ and elements of $\mathcal{P}_3$. We know that $M_\sigma$ commutes with all permutations, and that $-I_3$ commutes with all matrices. The product of permutations is again a permutation, and so we can see that for all $Q\in \mathcal{G}_\sigma$,
    \begin{align}
    Q
    &=(-I_3)^a M_{\Tilde{\sigma}}^b P \\
    &= (-1)^a M_{\Tilde{\sigma}}^b P,
    \end{align}
    for some $a,b\in\mathbb{N}, P\in\mathcal{P}_3$.
    We can take $a,b\in\{0,1\}$ without loss of generality, and applying \Cref{PermuteGenerateProp} this completes the proof.
\end{proof}

\begin{proposition} \label{SigmaGenerateProp2}
    The symmetry group $\mathcal{G}_\sigma$ is generated by $M_\sigma, P_{12}, -M_\sigma P_b$. 
\end{proposition}

\begin{proof}
    It is enough to prove that these three elements generate $-I_3,M_{\Tilde{\sigma}},P_{12},P_f$, and then we can apply \Cref{SigmaGenerateProp1}.
    We clearly have symmetries $P_{12}$ and $M_{\Tilde{\sigma}}$ by hypothesis.
    Observe that 
\begin{align}
    (-M_{\Tilde{\sigma}}P_b)^2
    &=
    M_{\Tilde{\sigma}}^2 P_b^2 \\
    &=
    P_f,
\end{align}
    so $P_f$ is generated by the three elements.
    Finally observe that
    \begin{align}
    M_{\Tilde{\sigma}}
    (-M_{\Tilde{\sigma}}P_b)^3
    &=
    -M_{\Tilde{\sigma}}^2 P_b P_f \\
    &=
    -I_3.
    \end{align}
    Applying \Cref{SigmaGenerateProp1}, this completes the proof.
\end{proof}

\begin{proposition} \label{GeneratePropN}
    Let $\mathcal{N}$ be the group generated by $M_{e_1},R_{\frac{2\pi}{3}}$.
    This group has six elements and is given by
    \begin{equation}
    \mathcal{N}=
    \mathcal{R}_3
    \cup M_{e_1}\mathcal{R}_3
    \end{equation}
    that is, for all $Q\in\mathcal{N}$,
    \begin{equation}
      Q=
      M_{e_1}^\alpha 
    R_{\frac{2\pi}{3}}^\beta,
    \end{equation}
    where $\alpha\in \{0,1\}, \beta\in\{0,1,2\}$.
\end{proposition}

\begin{proof}
    This follows more or less immediately from the commutator relation
    \begin{align}
    M_{e_1} R_\frac{2\pi}{3}
    &=
    R_\frac{4\pi}{3} M_{e_1}  \\
    M_{e_1} R_\frac{4\pi}{3}
    &=
    R_\frac{2\pi}{3} M_{e_1}.
    \end{align}
    This fact means that every element involving the product of one reflection and one rotation can be written with the reflection on the right, and also that any product can be reduced to one of this form, because the product of three or more elements can always be reduced to a product of two or less. For example,
    \begin{equation}
    M_{e_1}R_\frac{2\pi}{3}M_{e_1}
    =R_\frac{4\pi}{3} M_{e_1}^2
    =R_\frac{4\pi}{3}.
    \end{equation}
\end{proof}

\begin{proposition} \label{GenerateProp1}
    The symmetry group $\mathcal{G}$ is generated by
    $-I_3, M_{e_1}, M_{e_3}, R_{\frac{2\pi}{3}}$.
    Furthermore, $\mathcal{G}$ has the 24 elements
    \begin{equation}
    \mathcal{G}
    =
    \pm \mathcal{N} \cup 
    \pm M_{\Tilde{\sigma}} \mathcal{N};
    \end{equation}
    that is, for all $Q\in\mathcal{G}$,
    \begin{equation}
    Q=(-1)^a M_{e_3}^b M_{e_1}^\alpha 
    R_{\frac{2\pi}{3}}^\beta,
    \end{equation}
    where $a,b,\alpha\in \{0,1\}$ and 
    $\beta\in\{0,1,2\}$.
\end{proposition}

\begin{proof}
    By definition, $\mathcal{G}$ is generated by $M_{e_1},M_{e_2},M_{e_3}, R_\frac{\pi}{3}$,
    so to prove that $\mathcal{G}$ is generated by the four elements, it is sufficient to show that these elements generate the generators.
    Observe that
    \begin{equation}
    M_{e_2}=-M_{e_1}M_{e_3}.
    \end{equation}
    Next observe that 
    \begin{equation}
    R_\pi=R_{-\pi}=M_{e_1} M_{e_2},
    \end{equation}
    and that
    \begin{equation}
    R_{\frac{\pi}{3}}=R_{\frac{4\pi}{3}}R_{-\pi},
    \end{equation}
    and therefore, the four elements generate $\mathcal{G}$.
    Next observe that $M_{e_3}$ commutes with $M_{e_1}$ and $R_{\frac{\pi}{3}}$, so analogously to the proof of \Cref{SigmaGenerateProp1}, we can see that for all $Q\in\mathcal{G}$,
    \begin{align}
    Q
    &=(-I_3)^a M_{e_3}^b N \\
    &= (-1)^a M_{e_3}^b N,
    \end{align}
    for some $a,b\in\mathbb{N}, N\in\mathcal{N}$.
    We can take $a,b\in\{0,1\}$ without loss of generality, and applying \Cref{GeneratePropN} this completes the proof.
\end{proof}

\begin{proposition} \label{GenerateProp2}
    The symmetry group $\mathcal{G}$ is generated by
    $M_{e_1},M_{e_3},R_{\frac{\pi}{3}}$.
\end{proposition}

\begin{proof}
    It suffices to show that $M_{e_2}$ is generated by these three elements. Observe that
    \begin{equation}
    R_\frac{\pi}{3}^3=R_{\pi}= M_{e_1}M_{e_2},
    \end{equation}
    and we can conclude that
    \begin{equation}
    M_{e_2}=M_{e_1} R_\frac{\pi}{3}^3.
    \end{equation}
    This completes the proof.
\end{proof}

\subsection{Equivalence of $\mathcal{G}$ and $\mathcal{G}_\sigma$ up to rotation}

We have now characterized $\mathcal{G}$ and $\mathcal{G}_\sigma$ sufficiently to prove that these symmetry groups are equivalent up to rotation in this subsection.

\begin{proposition} \label{CoordTransSymProp}
    Suppose that $Q,M\in O(3)$
    and that $u\in C\left(\mathbb{R}^3;
    \mathbb{R}^3\right)$, and let $v=u^Q$.
    Then 
    \begin{equation}
        u^M=u
    \end{equation}
     if and only if
     \begin{equation}
         v^{QMQ^{tr}}=v.
     \end{equation}
    In other words, $M$ is a symmetry of $u$ if and only if 
    $QMQ^{tr}$ is a symmetry of $u^Q$.
\end{proposition}

\begin{proof}
    Suppose $u^M=u$.
    Recalling the identity in \Cref{OrthoComposeProp}, we can conclude that
    \begin{align}
    v^{QM Q^{tr}}
    &=
    \left(u^Q\right)^{QM Q^{tr}} \\
    &=
    u^{QM Q^{tr}Q} \\
    &=
    u^{QM} \\
    &=
    \left(u^M\right)^Q \\
    &=
    u^Q \\
    &=
    v.
    \end{align}
    Now suppose that $v^{QM Q^{tr}}=v$.
    Noting that $u=v^{Q^{tr}}$,
    we can likewise compute that
    \begin{align}
    u^M
    &=
    \left(v^{Q^{tr}}\right)^M \\
    &=
    v^{MQ^{tr}} \\
    &=
    v^{Q^{tr}QMQ^{tr}} \\
    &=
    \left(v^{QMQ^{tr}}\right)^{Q^{tr}} \\
    &=
    v^{Q^{tr}} \\
    &= 
    u.
    \end{align}
    This completes the proof.
\end{proof}

We will now note some useful matrix identities, whose computations are left to the reader.

\begin{lemma} \label{ConjugateLemma}
    The transformations $P_{12},P_f,M_{\Tilde{\sigma}}$ are equivalent to the transformations $M_{e_1},R_\frac{2\pi}{3},M_{e_3}$ under a change of coordinates by the rotation
    \begin{equation}
        Q_\sigma
        =
    \left(\begin{array}{ccc}
\frac{1}{\sqrt{2}} & \frac{1}{\sqrt{6}} 
& \frac{1}{\sqrt{3}} \\
-\frac{1}{\sqrt{2}} & \frac{1}{\sqrt{6}} 
& \frac{1}{\sqrt{3}} \\
0 & -\frac{2}{\sqrt{6}} 
& \frac{1}{\sqrt{3}} \\
    \end{array}\right).
    \end{equation}
    In particular,
    \begin{align}
    M_{e_1}&=
    Q_\sigma^{tr} P_{12} Q_\sigma \\
    R_\frac{2\pi}{3}&=
    Q_\sigma^{tr} P_f Q_\sigma \\
    M_{e_3} &=
    Q_\sigma^{tr} M_{\Tilde{\sigma}} Q_\sigma,
    \end{align}
    and equivalently
    \begin{align}
    P_{12}&=
    Q_\sigma M_{e_1} Q_\sigma^{tr} \\
    P_f&=
    Q_\sigma R_\frac{2\pi}{3}Q_\sigma^{tr} \\
    M_{\Tilde{\sigma}}&=
    Q_\sigma M_{e_3}Q_\sigma^{tr}.
    \end{align}
\end{lemma}

\begin{proposition} \label{GroupEquivProp1}
    For all
    $V\in\mathcal{G}_\sigma$ there exists $V'\in\mathcal{G}$ such that
    \begin{equation}
    V= Q_\sigma V' Q_\sigma^{tr}.
    \end{equation}
    Note that this implies that $Q_\sigma^{tr} V  Q_\sigma\in \mathcal{G}$.
\end{proposition}

\begin{proof}
    Suppose that $V\in \mathcal{G}_\sigma$.
    Applying \Cref{SigmaGenerateProp1}, we can see that
    \begin{equation}
    V=
    (-1)^a M_{\Tilde{\sigma}}^b P_{12}^\alpha P_f^\beta,
    \end{equation}
    where $a,b,\alpha\in \{0,1\}$ and 
    $\beta\in\{0,1,2\}$.
    Applying \Cref{ConjugateLemma},
    we compute that
    \begin{align}
    V&=
    (-1)^a
    \left(Q_\sigma M_{e_3}Q_\sigma^{tr}\right)^b
    \left(Q_\sigma M_{e_1} Q_\sigma^{tr}\right)^\alpha
    \left(Q_\sigma R_\frac{2\pi}{3}
    Q_\sigma^{tr}\right)^\beta \\
    &=
    Q_\sigma
    (-1)^a M_{e_3}^b M_{e_1}^\alpha 
    R_{\frac{2\pi}{3}}^\beta
    Q_\sigma^{tr},
    \end{align}
    and so the result holds with
    \begin{equation}
    V'=
    (-1)^a M_{e_3}^b M_{e_1}^\alpha 
    R_{\frac{2\pi}{3}}^\beta.
    \end{equation}
    Note that multiplying by $Q_\sigma$ on the left and $Q_\sigma^{tr}$ on the right yields
    \begin{equation}
    Q_\sigma V Q_\sigma^{tr}
    =
    V'
    \in \mathcal{G},
    \end{equation}
    completing the proof.
\end{proof}

\begin{proposition} \label{GroupEquivProp2}
    For all $V\in\mathcal{G}$ there exists $V'\in\mathcal{G}_\sigma$ such that
    \begin{equation}
    V= Q_\sigma^{tr}V' Q_\sigma.
    \end{equation}
    Note that this implies 
    $Q_\sigma V Q_\sigma^{tr}\in \mathcal{G}_\sigma$.
\end{proposition}

\begin{proof}
    The proof is exactly analogous to \Cref{GroupEquivProp1}, and is left to the reader.
\end{proof}

\begin{remark}
    Note that combining \Cref{GroupEquivProp1,GroupEquivProp2}, we can see that $\mathcal{G}$ and $\mathcal{G}_\sigma$ are equivalent up to conjugation by the rotation $Q_\sigma$:
    \begin{align}
        \mathcal{G}_\sigma
        &=
        Q_\sigma \mathcal{G} Q_\sigma^{tr} \\
        \mathcal{G}
        &=
        Q_\sigma^{tr}\mathcal{G}_\sigma Q_\sigma.
    \end{align}
    This implies that $\mathcal{G}$-symmetric and $\mathcal{G}_{\sigma}$-symmetric vector fields are equivalent up to rotation.
\end{remark}
We will now prove \Cref{EquivThmIntro}, which is restated for the reader's convenience.

\begin{theorem} \label{EquivThm}
    Suppose $u\in C\left(\mathbb{R}^3;
    \mathbb{R}^3\right)$.
    Then $u$ is $\mathcal{G}$-symmetric if and only if $u^{Q_\sigma}$ is $\mathcal{G}_\sigma$-symmetric.
\end{theorem}

\begin{proof}
    Suppose that $u$ is $\mathcal{G}$-symmetric. Then applying \Cref{CoordTransSymProp}, we can see that $Q_\sigma MQ_\sigma^{tr}$ is a symmetry of $u^{Q_\sigma}$ for all $M\in\mathcal{G}$.
    Applying \Cref{GroupEquivProp1}, we know that every element in $M'\in \mathcal{G}_\sigma$ can be written in the form $M'=Q_\sigma MQ_\sigma^{tr}$, so we can conclude that $M'$ is a symmetry of $u^{Q_\sigma}$ for all $M'\in\mathcal{G}_\sigma$.

    Now suppose that $u^{Q_\sigma}$ is $\mathcal{G}_\sigma$-symmetric. Clearly we have
    $u=\left(u^{Q_\sigma}\right)^{Q_\sigma^{tr}}$,
    so again applying \Cref{CoordTransSymProp}, 
    we find that $Q_{\sigma}^{tr}M' Q_\sigma$ is a symmetry of $u$ for all $M'\in \mathcal{G}_\sigma$.
    Applying \Cref{GroupEquivProp2}, we know that every element in $M \in \mathcal{G}$ can be written in the form $M=Q_\sigma^{tr} M' Q_\sigma$, so we can conclude that $M$ is a symmetry of $u$ for all $M\in\mathcal{G}$.
\end{proof}

\subsection{Conditions for symmetry}

In this subsection, we will give some necessary and sufficient conditions for a vector field to be $\mathcal{G}$-symmetric or $\mathcal{G}_\sigma$-symmetric.

\begin{lemma} \label{GenerateLemma}
    Suppose a finite group $\mathcal{G}\subset O(3)$ is generated by $G_1,G_2,..., G_n$. Then $u\in C\left(\mathbb{R}^3;\mathbb{R}^3\right)$ is $\mathcal{G}$-symmetric if and only if $G_1,G_2,...,G_n$ are symmetries of $u$ with
    \begin{equation}
    u=u^{G_1}=u^{G_2}=...=u^{G_n}
    \end{equation}
\end{lemma}

\begin{theorem} \label{GenerateThm}
    A vector field $u\in C\left(\mathbb{R}^3;\mathbb{R}^3\right)$ is $\mathcal{G}$-symmetric if and only if 
    $R_\frac{\pi}{3},M_{e_1},M_{e_3}$ are symmetries of $u$, that is
    \begin{equation}
    u = u^{R_\frac{\pi}{3}}
    = u^{M_{e_1}}
    = u^{M_{e_3}}.
    \end{equation}
\end{theorem}

\begin{proof}
    This follows immediately from \Cref{GenerateLemma} and \Cref{GenerateProp2}.
\end{proof}

\begin{theorem} \label{SigmaGenerateThm}
    A vector field $u\in C\left(\mathbb{R}^3;\mathbb{R}^3\right)$ is $\mathcal{G}_\sigma$-symmetric if and only if 
    $-M_{\Tilde{\sigma}} P_b,P_{12},M_{\Tilde{\sigma}}$ are symmetries of $u$, that is
    \begin{equation}
    u = 
    u^{-M_{\Tilde{\sigma}} P_b}
    = u^{M_{\Tilde{\sigma}}}
    = u^{P_{12}}.
    \end{equation}
\end{theorem}

\begin{proof}
    This follows immediately from \Cref{GenerateLemma} and \Cref{SigmaGenerateProp2}.
\end{proof}

For divergence free vector fields, we can also characterize symmetries in terms of the vorticity, including $\mathcal{G}$-symmetric and $\mathcal{G}_\sigma$-symmetric vector fields.

\begin{corollary} \label{GenerateCor}
    Suppose $u\in H^s_{df}, s>\frac{5}{2}$.
    Then $u$ is $\mathcal{G}$-symmetric if and only if
    \begin{align}
    \omega
    &=
    \omega^{R_\frac{\pi}{3}} \\
    &=
    -\omega^{M_{e_1}} \\
    &=
    -\omega^{M_{e_3}}.
    \end{align}
\end{corollary}

\begin{proof}
    This follows immediately from 
    \Cref{VortGeneralSymThm,GenerateThm}.
\end{proof}

\begin{corollary} \label{SigmaGenerateCor}
    Suppose $u\in H^s_{df}, s>\frac{5}{2}$.
    Then $u$ is $\mathcal{G}_\sigma$-symmetric 
    if and only if
    \begin{align}
    \omega
    &=
    \omega^{-M_{\Tilde{\sigma}}P_b} \\
    &=
    -\omega^{M_{P_{12}}} \\
    &=
    -\omega^{M_{\Tilde{\sigma}}}.
    \end{align}
\end{corollary}

\begin{proof}
    This follows immediately from 
\Cref{VortGeneralSymThm,SigmaGenerateThm}.
\end{proof}

\subsection{Fourier series expansion}

In this subsection, we will prove \Cref{FourierSeriesIntro} and \Cref{FourierSeriesCorIntro}, giving a Fourier series expansion for $\mathcal{G}$-symmetric solutions of the Euler equation.

\begin{theorem} \label{FourierSeries}
    Suppose $u\in H^s\left(\mathbb{R}^3;
    \mathbb{R}^3\right), s>\frac{5}{2}$.
    Then $u$ is $\mathcal{G}$-symmetric and divergence free if and only if it can be expressed as a Fourier series in cylindrical coordinates by
    \begin{multline} \label{Fourier}
    u(x)=
    u_{r,0}(r,z)e_r
    +u_{z,0}(r,z)e_z \\
    +\sum_{n=1}^{\infty}
    u_{r,n}(r,z)\cos(6n\theta)e_r
    +u_{z,n}(r,z)\cos(6n\theta)e_z
    +u_{\theta,n}(r,z)
    \sin(6n\theta)e_\theta,
    \end{multline}
    where for all $n\in\mathbb{Z}^+, u_{r,n},u_{\theta,n}$ are even in $z$
    and $u_{z,n}$ is odd in $z$,
    and furthermore, due to the divergence free constraint,
    \begin{equation} \label{DivFreeBase}
    \partial_r u_{r,0}
    +\frac{1}{r}u_{r,0}
    +\partial_z u_{z,0}
    =0,
    \end{equation}
    and for all $n\in\mathbb{N}$,
    \begin{equation} \label{DivFreeMode}
    \partial_r u_{r,n}
    +\frac{1}{r}u_{r,n}
    +\partial_z u_{z,n}
    +\frac{6n}{r}u_{\theta,n}
    =0.
    \end{equation}
    Note that this means that $u_{\theta,n}$ is completely determined by $u_{r,n}, u_{z,n}$ with
    \begin{align}
    u_{\theta,n}
    &=
    -\frac{r}{6n}
    \left(\partial_r u_{r,n}
    +\frac{1}{r}u_{r,n}
    +\partial_z u_{z,n}\right) \\
    &=
    -\frac{1}{6n}
    \left(\partial_r(ru_{r,n})
    +\partial_z(ru_{z,n})\right).
    \end{align}
\end{theorem}

\begin{proof}
    Suppose $u$ is $\mathcal{G}$-symmetric. We begin by writing $u$ in cylindrical coordinates
    \begin{equation}
    u(x)=
    u_r(r,z,\theta)e_r
    +u_z(r,z,\theta)e_z
    +u_\theta(r,z,\theta)e_\theta
    \end{equation}
    Observe that in cylindrical coordinates
    \begin{equation}
    M_{e_2}(r,z,\theta)=(r,z,-\theta),
    \end{equation}
    and applying $M_{e_2}$ to each of the coordinate vectors we find
    \begin{align}
    e_r^{M_{e_2}}&=
    M_{e_2}\left(\begin{array}{c}
         \cos(\theta)  \\ -\sin(\theta) \\ 0
    \end{array}\right)
    = e_r \\
    e_\theta^{M_{e_2}}&=
    M_{e_2}\left(\begin{array}{c}
         \sin(\theta)  \\ \cos(\theta) \\ 0
    \end{array}\right)
    =-e_\theta,
    \end{align}
    and of course $e_z=e_3$ is a constant vector and is unaffected by this transformation.
    Using the fact that $M_{e_2}$ is a symmetry of $u$,
    we can conclude that
    \begin{equation}
    u(x)= u^{M_{e_2}}(x)
    =
    u_r(r,z,-\theta)e_r
    +u_z(r,z,-\theta)e_z
    -u_\theta(r,z,-\theta)e_\theta,
    \end{equation}
    and so 
    $u_r$ and $u_z$ are even in $\theta$, and $u_\theta$ is odd in $\theta$. 
    We can also see that $e_r,e_\theta,e_z$ are all $2\pi$ periodic in $\theta$, trivially for the constant vector $e_z$, and therefore we have the Fourier series
    \begin{align}
    u_r(r,z,\theta)&= \label{A}
    u_{r,0}(r,z)
    +\sum_{k=1}^\infty u_{r,k}(r,z)\cos(k\theta) \\
    u_z(r,z,\theta)&= \label{B}
    u_{z,0}(r,z)
    +\sum_{k=1}^\infty u_{z,k}(r,z)\cos(k\theta) \\
    u_\theta(r,z,\theta)&= \label{C}
    \sum_{k=1}^\infty u_{\theta,k}(r,z)\sin(k\theta) 
    \end{align}

    We know that $u=u^{M_{e_3}}$, which implies that $u_r,u_\theta$ are even with respect to $z$ and $u_z$ is odd with respect to $z$. 
    This in turn implies that
    for all $k\in\mathbb{Z}^+,
    u_{r,k},u_{\theta,k}$ are even in $z$
    and $u_{z,k}$ is odd in $z$.
    It remains to show that the only Fourier modes in the expansion are of order $k=6n$. Observe the unit basis vectors in cylindrical coordinates are invariant under rotation with
    \begin{align}
    e_r^{R_\frac{\pi}{3}}&=e_r \\
    e_\theta^{R_\frac{\pi}{3}}&=e_\theta,
    \end{align}
    and furthermore that
    \begin{equation}
    R_\frac{\pi}{3}^{tr}(r,z,\theta)=
    \left(r,z,\theta-\frac{\pi}{3}\right).
    \end{equation}
    Using the fact that $u=u^{R_\frac{\pi}{3}}$,
    we can conclude that
    \begin{multline}
    u(x)= u_{r,0}(r,z)e_r+u_{z,0}(r,z)e_z \\
    +\sum_{k=1}^\infty
    u_{r,k}(r,z)
    \cos\left(k\theta-\frac{k\pi}{3}\right) e_r
    +u_{z,k}(r,z)
    \cos\left(k\theta-\frac{k\pi}{3}\right) e_z
    +u_{\theta,k}(r,z)
    \sin\left(k\theta-\frac{k\pi}{3}\right) e_\theta.
    \end{multline}
    This implies that
    \begin{align}
    u_r(r,z,\theta)&= \label{X}
    u_{r,0}(r,z)
    +\sum_{k=1}^\infty
    u_{r,k}(r,z)\left(\cos(k\theta)
    \cos\left(\frac{k\pi}{3}\right)
    +\sin(k\theta)
    \sin\left(\frac{k\pi}{3}\right)\right) \\
    u_z(r,z,\theta)&= \label{Y}
    u_{z,0}(r,z)
    +\sum_{k=1}^\infty
    u_{z,k}(r,z)\left(\cos(k\theta)
    \cos\left(\frac{k\pi}{3}\right)
    +\sin(k\theta)
    \sin\left(\frac{k\pi}{3}\right)\right) \\
    u_\theta(r,z,\theta)&= \label{Z}
    \sum_{k=0}^\infty
    u_{\theta,k}(r,z)\left(
    \sin(k\theta)\cos\left(\frac{k\pi}{3}\right)
    -\cos(k\theta)\sin\left(\frac{k\pi}{3}\right)
    \right).
    \end{align}
    Using the uniqueness of Fourier series and comparing \cref{A,B,C} and \cref{X,Y,Z}, we can see that 
    for all $k\in\mathbb{N}$,
    \begin{equation}
    u_{r,k},u_{z,k},u_{\theta,k}=0,
    \end{equation}
    unless
    \begin{align}
    \cos\left(\frac{k\pi}{3}\right)&=1 \\
    \sin\left(\frac{k\pi}{3}\right)&=0.
    \end{align}
    This happens if and only if $k\in 6\mathbb{N}$, so letting $k=6n$, we can see that
    \begin{multline}
    u(x)=
    u_{r,0}(r,z)e_r
    +u_{z,0}(r,z)e_z \\
    +\sum_{n=1}^{\infty}
    u_{r,n}(r,z)\cos(6n\theta)e_r
    +u_{z,n}(r,z)\cos(6n\theta)e_z
    +u_{\theta,n}(r,z)
    \sin(6n\theta)e_\theta.
    \end{multline}

    For the forward direction of the proof, all that remains is to state the divergence free constraint in terms of the Fourier series. In cylindrical coordinates we have
    \begin{equation}
    \nabla=
    e_r\partial_r+e_z\partial_z
    +\frac{1}{r}e_\theta \partial_\theta.
    \end{equation}
    Therefore we can compute that
    \begin{equation}
    \nabla\cdot u(x)
    =
    \partial_r u_{r,0}
    +\frac{1}{r}u_{r,0}
    +\partial_z u_{z,0}
    +\sum_{n=1}^\infty
    \left(\partial_r u_{r,n}
    +\frac{1}{r}u_{r,n}
    +\partial_z u_{z,n}
    +\frac{6n}{r}u_{\theta,n}(r,z)\right)
    \cos(6n\theta),
    \end{equation}
    and so by the uniqueness of Fourier series
    \begin{equation}
    \partial_r u_{r,0}
    +\frac{1}{r}u_{r,0}
    +\partial_z u_{z,0}=0,
    \end{equation}
    and for all $n\in\mathbb{N}$,
    \begin{equation}
    \partial_r u_{r,n}
    +\frac{1}{r}u_{r,n}
    +\partial_z u_{z,n}
    +\frac{6n}{r}u_{\theta,n}(r,z)=0. 
    \end{equation}
    This completes the forward direction of the proof.

    Now we consider the reverse direction. Suppose $u$ can be expressed as a Fourier series in $\theta$ satisfying \cref{Fourier,DivFreeBase,DivFreeMode}.
    Then by the exact computations above we can see that 
    \begin{equation}
    \nabla\cdot u=0,
    \end{equation}
    and furthermore that $M_{e_3},M_{e_2},
    R_\frac{\pi}{3}$ are symmetries of $u$.
    Observing that
    \begin{equation}
    M_{e_1}= M_{e_2}R_\frac{\pi}{3}^3,
    \end{equation}
    we can see that $M_{e_1}$ is a symmetry of $u$, and therefore, applying \Cref{GenerateThm}, $u$ is $\mathcal{G}$-symmetric.
\end{proof}

\begin{corollary} \label{AxisymmInclusion}
    Suppose $u\in H^s_{df}, s>\frac{5}{2}$ is axisymmetric, swirl-free, mirror symmetric in the $e_3$ direction---meaning that $u_r$ is even in $z$ and $u_z$ is odd in $z$.
    Then $u$ is $\mathcal{G}$-symmetric.
\end{corollary}

\begin{proof}
    This follows immediately from \Cref{FourierSeries}, as a vector field of this form is just the zero order term in the Fourier series expansion for any $\mathcal{G}$-symmetric vector field.
\end{proof}

\begin{corollary} \label{FourierSeriesCor}
    Suppose $u\in C\left([0,T_{max});H^s_{df}\right), s>\frac{5}{2}$ is a solution of the Euler with $\mathcal{G}$-symmetric initial data $u^0\in H^s_{df}$. Then  $u(\cdot,t)$ is 
    $\mathcal{G}$-symmetric for all $0\leq t<T_{max}$; consequently, for all $0\leq t<T_{max}$,
    \begin{multline}
    u(x,t)=
    u_{r,0}(r,z,t)e_r
    +u_{z,0}(r,z,t)e_z \\
    +\sum_{n=1}^{\infty}
    u_{r,n}(r,z,t)\cos(6n\theta)e_r
    +u_{z,n}(r,z,t)\cos(6n\theta)e_z
    +u_{\theta,n}(r,z,t)
    \sin(6n\theta)e_\theta,
    \end{multline}
    where for all $n\in\mathbb{Z}^+,
    u_{r,n},u_{\theta,n}$ are even in $z$
    and $u_{z,n}$ is odd in $z$,
    and furthermore
    \begin{equation}
    \partial_r u_{r,0}
    +\frac{1}{r}u_{r,0}
    +\partial_z u_{z,0}
    =0,
    \end{equation}
    and for all $n\in\mathbb{N}$,
    \begin{equation}
    \partial_r u_{r,n}
    +\frac{1}{r}u_{r,n}
    +\partial_z u_{z,n}
    +\frac{6n}{r}u_{\theta,n}
    =0.
    \end{equation}
\end{corollary}

\begin{proof}
    Fix $Q\in \mathcal{G}$, and let $v=u^Q$.
    It is classical that $v\in C\left([0,T_{max});H^s_{df}\right)$ is also a solution of the Euler equation, because the solution set of the Euler equation is invariant under orthogonal transformations. See section 1.1 in \cite{MajdaBertozzi} for details.
    By hypothesis we can see that $u^0=v^0$, and so by uniqueness we can determine that $u=v$.
    Therefore, for all $Q\in \mathcal{G}$,
    \begin{equation}
    u^Q=u,
    \end{equation}
    and so $u$ is $\mathcal{G}$-symmetric.
    Applying \Cref{FourierSeries}, this completes the proof.
\end{proof}

\begin{remark}
    We can also use the Fourier series expansions in \Cref{FourierSeries} and \Cref{FourierSeriesCor} for $\mathcal{G}_\sigma$-symmetric solutions of the incompressible Euler equation if we take
    \begin{align}
    z &= x\cdot \Tilde{\sigma} \\
    x' &= x-z\Tilde{\sigma} \\
    r &=  |x'| \\
    e_z &= \Tilde{\sigma} \\
    e_r &= \frac{x'}{|x'|} \\
    e_\theta &= e_z \times e_r,
    \end{align}
    and we take $-\pi<\theta\leq \pi$ to be the angle in the plane $x_1+x_2+x_3=0$ plane between $x'$ and $\left(\begin{array}{c}
    \frac{1}{\sqrt{2}} \\ 
    -\frac{1}{\sqrt{2}} \\ 0
    \end{array}\right)$.
\end{remark}

\begin{remark}
    Note that \Cref{FourierSeriesCor} clearly implies that the axisymmetric, swirl free solution of the Euler equation $u\in C\left([0,1);C^{1,\alpha}\right)
    \cap C^1\left([0,1);C^\alpha\right)$ constructed by Elgindi \cite{Elgindi} blowing up in finite-time at $T_{max}=1$ is $\mathcal{G}$-symmetric.
    It then immediately follows from \Cref{EquivThm}
    that $u^{Q_\sigma} \in 
    C\left([0,1);C^{1,\alpha}\right)
    \cap C^1\left([0,1);C^\alpha\right)$ 
    is $\mathcal{G}_\sigma$-symmetric and blows up in finite-time $T_{max}=1$, so finite-time blowup in this geometric setting is clearly possible for $C^{1,\alpha}$ solutions of the Euler equation. This gives a quicker proof of \Cref{AlphaPermuteBlowup}, but the work in \Cref{PermuteAxisymSubsection} was necessary to develop the relationship between permutation symmetry and axisymmetry for vector fields that are not necessarily odd or $\sigma$-mirror symmetric.
\end{remark}

\begin{remark}
    The Ansatz from \Cref{FourierSeriesCor} is likely to be useful in further study of the finite-time blowup problem for the Euler equation. In particular, because the $C^{1,\alpha}$ blowup solution from \cites{Elgindi,ElgindiGhoulMasmoudi} is axisymmetric, swirl-free, and therefore has a geometric structure that rules out finite-time blowup for smooth solutions, it would be progress to find even other $C^{1,\alpha}$ solutions of the Euler equation in the $\mathcal{G}$-symmetric geometry without axisymmetry. In this case, it is possible that the blowup could survive mollification of the initial data, which it cannot possibly do for axisymmetric, swirl-free, $C^{1,\alpha}$ blowup solutions. A natural question to consider is whether a small, non-axisymmetric perturbation of the of the blowup solution constructed by Elgindi will still exhibit finite-time blowup.
\end{remark}

\section*{Acknowledgements}

This material is based upon work supported by the Swedish Research Council under grant no. 2021-06594 while the author was in residence at the Institut Mittag-Leffler in Djursholm, Sweden during the Autumn 2023 semester.

\bibliography{bib}

\end{document}